\newcommand{\defeq}{\vcentcolon=}
\newcommand{\eqdef}{=\vcentcolon}
\newcommand{\Law}{\mathcal{L}}
\newcommand{\weaklyto}{%
  \mathrel{\vbox{\offinterlineskip\ialign{%
    \hfil##\hfil\cr
    $\scriptscriptstyle\mathrm{w}$\cr
    $\to$\cr
}}}}
\newcommand{\distto}{%
  \mathrel{\vbox{\offinterlineskip\ialign{%
    \hfil##\hfil\cr
    $\scriptscriptstyle\mathrm{d}$\cr
    \noalign{\kern-.05ex}
    $\to$\cr
}}}}
\newcommand{\Probto}{%
  \mathrel{\vbox{\offinterlineskip\ialign{%
    \hfil##\hfil\cr
    $\scriptscriptstyle\Prob$\cr
    \noalign{\kern-.05ex}
    $\to$\cr
}}}}
\newcommand{\TVto}{%
  \mathrel{\vbox{\offinterlineskip\ialign{%
    \hfil##\hfil\cr
    $\scriptscriptstyle\mathrm{TV}$\cr
    \noalign{\kern-.05ex}
    $\to$\cr
}}}}
\newcommand{\N}{\mathbb{N}}
\newcommand{\Z}{\mathbb{Z}}
\newcommand{\Q}{\mathbb{Q}}
\newcommand{\R}{\mathbb{R}}
\newcommand{\C}{\mathbb{C}}
\newcommand{\Chat}{\hat{\mathbb{C}}}
\newcommand{\Gen}{\mathcal{G}}
\newcommand{\Dom}{\mathcal{D}}
\newcommand{\I}{\mathcal{I}}
\newcommand{\V}{\mathbb{V}}
\newcommand{\Rp}{\R_{>}}
\newcommand{\imag}{\mathrm{i}}
\newcommand{\Real}{\mathrm{Re}}
\newcommand{\Imag}{\mathrm{Im}}
\newcommand{\Surv}{\mathcal{S}}
\newcommand{\comp}{\mathsf{c}}
\newcommand{\transp}{\mathsf{T}}
\newcommand{\Prob}{\mathbb{P}}
\DeclareMathOperator{\Var}{\mathbb{V}\mathrm{ar}}
\DeclareMathOperator{\Cov}{\mathrm{Cov}}
\DeclareMathOperator{\inner}{\mathrm{int}}
\newcommand{\E}{\mathbb{E}}
\newcommand{\A}{\mathcal{A}}
\newcommand{\F}{\mathcal{F}}
\renewcommand{\H}{\mathcal{H}}
\newcommand{\cZ}{\mathcal{Z}}
\newcommand{\1}{\mathbbm{1}}
\newcommand{\eqdist}{%
  \mathrel{\vbox{\offinterlineskip\ialign{%
    \hfil##\hfil\cr
    $\scriptscriptstyle\mathrm{law}$\cr
    \noalign{\kern.2ex}
    $=$\cr
}}}}
\newcommand{\supp}{\mathrm{supp}}
\newcommand{\gr}{\mathbb{U}}
\newcommand{\ds}{\mathrm{d} \mathit{s}}
\newcommand{\dt}{\mathrm{d} \mathit{t}}
\newcommand{\dx}{\mathrm{d} \mathit{x}}
\newcommand{\dy}{\mathrm{d} \mathit{y}}
\newcommand{\dz}{\mathrm{d} \mathit{z}}
\newcommand{\dmu}{\mathrm{d} \mu}
\newcommand{\dnu}{\mathrm{d} \nu}
\theoremstyle{plain}
\newtheorem{theorem}{Theorem}[section]
\newtheorem{corollary}[theorem]{Corollary}
\newtheorem{lemma}[theorem]{Lemma}
\newtheorem{proposition}[theorem]{Proposition}
\theoremstyle{remark}
\theoremstyle{definition}
\newtheorem{example}[theorem]{Example}
\numberwithin{equation}{section}
\begin{document}

%
%

\title{Fluctuations of Biggins' martingales at complex parameters}

\authors{%
Alexander~Iksanov\footnote{
Taras Shevchenko National University of Kyiv, Ukraine. Email: iksan@univ.kiev.ua}
\and
	Konrad~Kolesko\footnote{Universit\"at Innsbruck, Austria
	and
	Uniwersytet Wroc\l{}awski, Wroc\l{}aw, Poland.
	Email: kolesko@math.uni.wroc.pl}
  \and	Matthias~Meiners\footnote{Universit\"at Innsbruck, Austria.
  Email: matthias.meiners@uibk.ac.at}}

\begin{abstract}
The long-term behavior of a supercritical branching random walk
can be described and analyzed with the help of Biggins'
martingales, parame\-trized by real or complex numbers. The study
of these martingales with complex parameters is a rather recent
topic. Assuming that certain sufficient conditions for the
convergence of the martingales to non-degenerate limits hold, we
investigate the fluctuations of the martingales around their
limits. We discover three different regimes. First, we show that
for parameters with small absolute values, the fluctuations are
Gaussian and the limit laws are scale mixtures of the real or
complex standard normal laws.
We also cover the boundary of this
phase. Second, we find a region in the parameter space in which
the martingale fluctuations are determined by the extremal
positions in the branching random walk. Finally, there is a
critical region (typically on the boundary of the set of
parameters for which the martingales converge to a non-degenerate
limit) where the fluctuations are stable-like and the limit laws
are the laws of randomly stopped L\'evy processes satisfying
invariance properties similar to stability.
\smallskip

\noindent
{\bf Keywords:} branching random walk; central limit theorem; complex martingales; minimal position; point processes; rate of convergence; stable processes
\\{\bf Subclass:} MSC 60J80 $\cdot$ MSC 60F15
\end{abstract}

\maketitle

\section{Introduction}	\label{sec:intro}

We consider a discrete-time supercritical branching random walk on the real line.
The distribution of the branching random walk is governed by a point process $\cZ$ on $\R$.
Although there are numerous papers in which $\cZ(\R)$ is allowed to be infinite with
positive probability, the standing assumption of the present paper is $\cZ(\R)<\infty$ almost surely (a.\,s.).
At time $0$, the process starts with one individual (also called particle), the ancestor, which resides at the origin.
At time $1$, the ancestor dies and simultaneously places offspring on the real line with positions given by the points of the point process $\cZ$.
The offspring of the ancestor form the first generation of the branching random walk.
At time $2$, each particle of the first generation dies and has offspring with positions relative to their parent's position given by an independent copy of $\cZ$.
The individuals produced by the first generation particles form the second generation of the process,
and so on.

The sequence of (random) Laplace transforms of the point process of the $n$th generation positions, evaluated at an appropriate $\lambda \in \C$ and suitably normalized, forms a martingale.
This martingale that we denote by $(Z_n(\lambda))_{n\in\N_0}$, where $\N_0\defeq \N \cup \{0\}$ and $\N\defeq \{1,2,\ldots\}$, is called additive martingale or Biggins' martingale.
These martingales
play a key role in the study of the branching random walk, see e.g.\ \cite[Theorem 4]{Biggins:1992}
where the spread of the $n$th generation particles
is described in terms of additive martingales.
In the same paper, Biggins showed that subject to some mild conditions,
$Z_n(\lambda)$ converges almost surely to some limit $Z(\lambda)$ locally uniformly in $\lambda$ from a certain open domain $\Lambda \subseteq \C$.
Biggins' result was extended by two of the three present authors \cite{Kolesko+Meiners:2017} to (parts of) the boundary of the set $\Lambda$.
It is natural to ask for the rate of this convergence, i.e., for the fluctuations of $Z(\lambda)-Z_n(\lambda)$.

A partial answer to this question was given by Iksanov and Kabluchko \cite{Iksanov+Kabluchko:2016},
who proved a functional central limit theorem with a random centering for Biggins' martingale for real and sufficiently small $\lambda$.
The counterpart of this result in the context of the complex branching Brownian motion energy model has been derived by Hartung and Klimovsky \cite{Hartung+Klimovsky:2017}.
Another related statement for branching Brownian motion can be found in the recent paper by Maillard and Pain \cite{Maillard+Pain:2018},
where the fluctuations of the derivative martingale for branching Brownian motion are studied. The authors of the present paper also
investigated in \cite{Iksanov+al:2018} fluctuations of $Z(\lambda)-Z_n(\lambda)$ for real $\lambda$ in the regime
where the distribution of $Z_1(\lambda)$ belongs to the normal domain of attraction of an $\alpha$--stable law with $\alpha \in (1,2)$.
We refer to the end of Section \ref{sec:model and results} for a detailed account of the existing literature.

The aim of the paper at hand is to give a complete description of the fluctuations of Biggins' martingales whenever they converge
while making only minimal moment assumptions.
It turns out that, apart from the \emph{Gaussian regime} studied in \cite{Iksanov+Kabluchko:2016}, there are two further cases.
There is an \emph{extremal regime}, where the fluctuations are determined by the particles close to the minimal position in the branching random walk.
In this regime, the fluctuations are exponentially small with a polynomial correction.
And finally, there is a \emph{critical stable regime} with fluctuations of polynomial order.

\section{Model description and main results}	\label{sec:model and results}

We continue with the formal definition of the branching random walk and a review of the results
on which our work is based.

\subsection{Model description and known results}

\subsubsection*{The model.}
Set $\I \defeq  \bigcup_{n \geq 0} \N^n$. 
We use the standard Ulam-Harris notation, that is, for $u=(u_1,\ldots,u_n) \in \N^n$, we also write $u_1 \ldots u_n$.
Further, if $v=(v_1,\ldots,v_m) \in \N^m$, we write $uv$ for $(u_1,\ldots,u_n,v_1,\ldots,v_m)$.
For $k \leq n$, denote $u_1 \ldots u_{k}$, the ancestor of $u$ in generation $k$, by $u|_k$.
The ancestor of the whole population is identified with the empty tuple $\varnothing$ and its position is $S(\varnothing) = 0$.
Let $(\cZ(u))_{u \in \I}$ be a family of i.i.d.\ copies of the basic reproduction point process $\cZ$ defined on some probability space $(\Omega,\A,\Prob)$.
We write $\cZ(u) = \sum_{j=1}^{N(u)} X_j(u)$, where $N(u) = \cZ(u)(\R)$, $u \in \I$.
We assume that $\cZ(\varnothing) = \cZ$. In general, we drop the argument $\varnothing$ for quantities
derived from $\cZ(\varnothing)$, for instance, $N = N(\varnothing)$.
Generation $0$ of the population is given by $\Gen_0 \defeq \{\varnothing\}$
and, recursively,
\begin{equation*}
\Gen_{n+1} \defeq \{uj \in \N^{n+1}: u \in \Gen_n \text{ and } 1 \leq j \leq N(u)\}
\end{equation*}
is generation $n+1$ of the process. 
Define the set of all individuals by $\Gen \defeq \bigcup_{n \in \N_0} \Gen_n$.
The position of an individual $u = u_1 \ldots u_n \in \Gen_n$ is
\begin{equation*}
S(u)	\defeq	X_{u_1}(\varnothing) + \ldots + X_{u_n}(u_1 \ldots u_{n-1}).
\end{equation*}
The point process of the positions of the $n$th generation individuals will be denoted by $\cZ_n$, that is,
\begin{equation*}
\cZ_n = \sum_{|u|=n} \delta_{S(u)}
\end{equation*}
where here and in what follows, we write $|u|=n$ for $u \in \Gen_n$.
The sequence of point processes $(\cZ_n)_{n \in \N_0}$ is called a {\it branching random walk}.

We assume that $(\cZ_n)_{n \in \N_0}$ is supercritical, i.\,e., $\E[N]=\E[\cZ(\R)] > 1$.
Then the generation sizes $\cZ_n(\R)$, $n \in \N_0$ form a supercritical Galton-Watson process
and thus $\Prob(\Surv) > 0$ for the survival set
\begin{equation*}
\Surv \defeq \{\# \Gen_n > 0 \text{ for all } n \in \N\} = \{\cZ_n(\R) > 0 \text{ for all } n \in \N\}.
\end{equation*}
The Laplace transform of the intensity measure $\mu$ of $\cZ$ is the function
\begin{equation}	\label{eq:m}
\lambda ~\mapsto~ m(\lambda) \defeq \int_\R e^{-\lambda x} \, \mu(\dx) = \E\bigg[\sum_{|u|=1} e^{-\lambda S(u)} \bigg],	\qquad	\lambda \in \C
\end{equation}
where $\lambda = \theta + \imag \eta$ with $\theta,\eta \in \R$.
(We adopt the convention from \cite{Biggins:1992} and always write $\theta$ for $\Real(\lambda)$ and $\eta$ for $\Imag(\lambda)$.)
Throughout the paper, we assume that
\begin{equation*}
\Dom = \{\lambda \in \C: m(\lambda) \text{ converges absolutely}\} = \{\theta \in \R: m(\theta) < \infty\} + \imag \R	\text{ \ is non-empty.}
\end{equation*}
For $\lambda \in \Dom$ and $n \in \N_0$, let
\begin{equation*}
Z_n(\lambda)	\defeq	\frac{1}{m(\lambda)^n} \int_\R e^{-\lambda x} \, \cZ_n(\dx) = \frac{1}{m(\lambda)^n} \sum_{|u|=n} e^{-\lambda S(u)}.
\end{equation*}
Denote by $\F_n \defeq \sigma(\cZ(u): u \in \bigcup_{k=0}^{n-1} \N^k)$, and let $\F_{\infty} \defeq \sigma(\F_n: n \in \N_0)$.
It is well known and easy to check that $(Z_n(\lambda))_{n \in \N_0}$ forms a complex-valued martingale with respect to $(\F_n)_{n \in \N_0}$.
It is called {\it additive martingale in the branching random walk}
and also {\it Biggins' martingale} in honor of Biggins' seminal contribution \cite{Biggins:1977}.

\subsubsection*{Convergence of complex martingales.}

Convergence of these martingales has been investigated by various authors in the case $\lambda = \theta \in \R$,
see e.\,g.\ \cite{Alsmeyer+Iksanov:2009,Biggins:1977,Lyons:1997}.
For the complex case,
the most important sources for us are \cite{Biggins:1992} and \cite{Kolesko+Meiners:2017}.
Theorem 1 of \cite{Biggins:1992} states that if
\begin{equation*}	\tag{B1}	\label{eq:gamma moment Z_1(theta)}	\textstyle
\E[Z_1(\theta)^\gamma] < \infty	\quad	\text{for some } \gamma \in (1,2]
\end{equation*}
and
\begin{equation*}	\tag{B2}	\label{eq:contraction condition}	\textstyle
\frac{m(p\theta)}{|m(\lambda)|^p} < 1	\quad	\text{for some } p \in (1,\gamma],
\end{equation*}
then $(Z_n(\lambda))_{n \in \N_0}$ converges a.\,s.\ and in $L^p$ to a limit variable $Z(\lambda)$.
Theorem 2 in the same source gives that this convergence is locally uniform (a.\,s.\ and in mean)
on the set $\Lambda = \bigcup_{\gamma \in (1,2]} \Lambda_{\gamma}$
where $\Lambda_\gamma = \Lambda_{\gamma}^1 \cap \Lambda_{\gamma}^3$
and, for $\gamma \in (1,2]$,
\begin{equation*}	\textstyle
\Lambda_{\gamma}^1 =
\inner\{\lambda \in \Dom: \E[Z_1(\theta)^\gamma] < \infty\}
\text{ and }
\textstyle
\Lambda_{\gamma}^3 =
\inner\big\{\lambda \in \Dom: \inf_{1 \leq p \leq \gamma} \frac{m(p \theta)}{|m(\lambda)|^p} < 1\big\}.
\end{equation*}
In \cite{Kolesko+Meiners:2017}, convergence of the martingales $(Z_n(\lambda))_{n \in \N_0}$
for parameters $\lambda$ from the boundary $\partial \Lambda$ is investigated.
Theorem 2.1 in the cited article states that subject to the conditions
\begin{equation*}	\tag{C1}	\label{eq:characteristic index}	\textstyle
\frac{m(\alpha \theta)}{|m(\lambda)|^\alpha} = 1
\ \text{ and }\
\E\big[\sum_{|u|=1} \theta S(u) \frac{e^{-\alpha \theta S(u)}}{|m(\lambda)|^\alpha}\big] \geq -\log (|m(\lambda)|)
\text{ for some } \alpha \in (1,2)
\end{equation*}
and
\begin{equation*}	\tag{C2}	\label{eq:logarithmic moment condition}
\E[|Z_1(\lambda)|^\alpha \log_+^{2+\epsilon} (|Z_1(\lambda)|)] < \infty	\quad	\text{ for some } \epsilon > 0
\end{equation*}
with the same $\alpha$ as in \eqref{eq:characteristic index},
there is convergence of $Z_n(\lambda)$ to some limit variable $Z(\lambda)$.
The convergence holds a.\,s.\ and in $L^p$ for any $p < \alpha$.

As has already been mentioned the fluctuations of $Z_n(\lambda)$ around $Z(\lambda)$ as $n \to \infty$ are the subject of the present paper.
More precisely, we find (complex) scaling constants $a_n = a_n(\lambda) \not= 0$
such that $a_n (Z(\lambda)-Z_n(\lambda))$ converges in
distribution to a non-degenerate limit as $n \to \infty$.

\subsection{Main results}	\label{subsec:main results}

We give an example before we state our main results.

\subsubsection*{Example.}
There are three fundamentally different regimes for the fluctuations of $Z_n(\lambda)$ around its limit $Z(\lambda)$.
These regimes are best understood via an example, which is in close analogy to branching Brownian motion.

\begin{example}[Binary splitting and Gaussian increments]	\label{Exa:binary Gaussian}
Consider a branching random walk with binary splitting and independent standard Gaussian increments,
that is, $\cZ = \delta_{X_1}+\delta_{X_2}$
where $X_1,X_2$ are independent standard normals.
Then $m(\lambda) = 2 \exp(\lambda^2/2)$ for $\lambda \in \C$.
For every $\theta \in \R$ and $\gamma > 1$, we have
$\E[Z_1(\theta)^\gamma]<\infty$.
Hence
$\Lambda = \{\lambda \in \C: m(p\theta)/|m(\lambda)|^p < 1 \text{ for some } p \in (1,2]\}$.
Thus, $\lambda \in \Lambda$ if and only if there exists some $p \in (1,2]$ with $m(p\theta)/|m(\lambda)|^p < 1$.
The latter inequality is equivalent to
\begin{equation}	\label{eq:binary Gaussian condition}	\textstyle
(1-p) 2 \log 2 + p^2 \theta^2 - p(\theta^2-\eta^2) < 0.
\end{equation}
It follows from the discussion in \cite[Example 3.1]{Kolesko+Meiners:2017}
that $\lambda \in \Lambda$ iff
$|\theta| \leq \sqrt{2 \log 2}/2$ and $\theta^2 + \eta^2 < \log 2$,
or $\sqrt{2 \log 2}/2 \leq |\theta| < \sqrt{2 \log 2}$ and $|\eta|<\sqrt{2 \log 2}-|\theta|$.
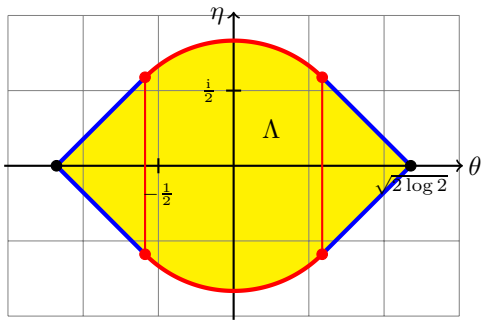
\begin{figure}[h]
\begin{center}
\begin{tikzpicture}[scale=1]
		\fill[yellow] plot[domain=-2*sqrt(2*ln(2)):-2*sqrt(ln(2)/2)] (\x, {2*sqrt(2*ln(2))+\x)})
					-- plot[domain=-2*sqrt(ln(2)/2):-2*sqrt(2*ln(2))] (\x, {-2*sqrt(2*ln(2))-\x)});
		\fill[yellow] plot[domain=2*sqrt(ln(2))/sqrt(2):2*sqrt(2*ln(2))] (\x, {2*sqrt(2*ln(2))-\x)})
					-- plot[domain=2*sqrt(2*ln(2)):2*sqrt(ln(2))/sqrt(2)] (\x, {-2*sqrt(2*ln(2))+\x)});
		\fill[yellow] plot[domain=-2*sqrt(ln(2)/2):2*sqrt(ln(2)/2)] (\x, {2*sqrt(ln(2)-(\x/2)*(\x/2))})
					-- plot[domain=2*sqrt(ln(2)/2):-2*sqrt(ln(2)/2)] (\x, {-2*sqrt(ln(2)-(\x/2)*(\x/2))});
		\draw [help lines] (-3,-2) grid (3,2);
		\draw [thick,->] (0,-2.05) -- (0,2.05);
		\draw [thick,->] (-3.05,0) -- (3.05,0);
		\draw (3,0) node[right]{$\theta$};
		\draw (0,2) node[left]{$\eta$};
		\draw (0.5,0.25) node[above]{$\Lambda$};
		\draw [ultra thick, domain=-2*sqrt(ln(2)/2):2*sqrt(ln(2)/2), samples=128, color=red] plot(\x, {2*sqrt(ln(2)-(\x/2)*(\x/2))});
		\draw [ultra thick, domain=-2*sqrt(ln(2)/2):2*sqrt(ln(2)/2), samples=128, color=red] plot(\x, {-2*sqrt(ln(2)-(\x/2)*(\x/2))});
		\draw [ultra thick, domain=2*sqrt(ln(2))/sqrt(2):2*sqrt(2*ln(2)), samples=128, color=blue] plot(\x, {2*sqrt(2*ln(2))-\x)});
		\draw [ultra thick, domain=2*sqrt(ln(2))/sqrt(2):2*sqrt(2*ln(2)), samples=128, color=blue] plot(\x, {-2*sqrt(2*ln(2))+\x)});
		\draw [ultra thick, domain=-2*sqrt(2*ln(2)):-2*sqrt(ln(2))/sqrt(2), samples=128,color=blue] plot(\x, {2*sqrt(2*ln(2))+\x)});
		\draw [ultra thick, domain=-2*sqrt(2*ln(2)):-2*sqrt(ln(2))/sqrt(2), samples=128, color=blue] plot(\x, {-2*sqrt(2*ln(2))-\x)});
		\draw[thick, color=red] (1.17741,1.17741) --(1.17741,-1.17741);
		\draw[thick, color=red] (-1.17741,1.17741) --(-1.17741,-1.17741);
		\filldraw[color=black] (2.35482,0) circle (2pt)	node[below]{\scriptsize $\sqrt{2 \log 2}$};
		\filldraw[color=black] (-2.35482,0) circle (2pt);
		\filldraw[color=red] (1.17741,1.17741) circle (2pt);
		\filldraw[color=red] (1.17741,-1.17741) circle (2pt);
		\filldraw[color=red] (-1.17741,1.17741) circle (2pt);
		\filldraw[color=red] (-1.17741,-1.17741) circle (2pt);
		\draw[thick] (-1,0.1) --(-1,-0.1) node[below]{\scriptsize $-\frac12$};
		\draw[thick] (0.1,1) -- (-0.1,1) node[left]{\scriptsize $\frac\imag2$};
\end{tikzpicture}
\caption{\small The figure shows the different regimes of fluctuations of Biggins' martingales
in the branching random walk with binary splitting and independent standard Gaussian increments.}
\label{fig:Gaussian Lambda}
\end{center}
\end{figure}
Corollary 1.1 of \cite{Iksanov+Kabluchko:2016} applies to parameters $\theta \in \R$ satisfying $m(2\theta)/m(\theta)^2 < 1$ or, equivalently, $|\theta|<\sqrt{\log 2}$.
In this case, the corollary gives convergence in distribution of $(\sqrt{2} \exp(-\theta^2/2))^{n}(Z(\theta)-Z_n(\theta))$
to a constant multiple of $\sqrt{Z(2\theta)} \cdot X$ where $X$ is real standard normal and independent of $Z(2\theta)$.
According to \cite{Lyons:1997}, $Z(2\theta)$ is non-degenerate iff $|\theta| < \sqrt{2 \log 2}/2$,
i.e., the limit in Corollary 1.1 of \cite{Iksanov+Kabluchko:2016} is non-degenerate only
for $\theta$ which are situated on the real axis strictly between the two red vertical lines in Figure \ref{fig:Gaussian Lambda}.
Our first result, Theorem \ref{Thm:Cor 1.1 in Iksanov+Kabluchko:2016} below, extends Corollary 1.1 from \cite{Iksanov+Kabluchko:2016}
and, in this particular example, gives the convergence of $(\sqrt{2} \exp((\lambda^2/2 - \theta^2))^{n} (Z(\lambda)-Z_n(\lambda))$
to a constant multiple of $\sqrt{Z(2 \theta)} \cdot X$ in the whole bounded yellow domain surrounded by red arcs and lines.
Here, $X$ is independent of $Z(2\theta)$ and complex standard normal if $\Imag(\lambda) \not = 0$.
For parameters $\lambda$ from the red vertical lines, the same limit relation holds, but the limit is degenerate as $Z(2\theta)=0$ a.\,s.
Indeed, $2\theta$ is then one of the two black dots in the figure.
This problem can be resolved with the help of Seneta-Heyde norming.
From \cite{Aidekon+Shi:2014} we know that $\sqrt{n} Z_n(2\theta)$
converges in probability to a constant multiple of the (non-degenerate) limit $D_\infty$ of the derivative martingale.
Modifying the scaling in Theorem \ref{Thm:Cor 1.1 in Iksanov+Kabluchko:2016} by the additional prefactor $n^{1/4}$
gives a nontrivial limit theorem where the limit is a constant multiple of $\sqrt{D_\infty} \cdot X$
with the same $X$ as before which is further independent of $D_\infty$.
This is the content of Theorem \ref{Thm:Gaussian boundary case} which applies to the $\lambda$ from the vertical red lines.

A similar trick does not work for parameters from the open yellow domains surrounded by the two triangles consisting of red vertical lines and diagonal blue lines.
There, the contribution of the minimal positions in the branching random walk to $Z(\lambda)-Z(\lambda)$ is too large for a limit theorem with a (randomly scaled)
normal or stable limit.
Instead, it can be checked that our Theorem \ref{Thm:domination of extremal particles} applies.
The most tedious part here is to show that $\E[|Z(\lambda)|^p]<\infty$ for some suitable $p$,
but this can be achieved by checking that the sufficient conditions \eqref{eq:gamma moment Z_1(theta)} and \eqref{eq:contraction condition} are fulfilled.
Theorem \ref{Thm:domination of extremal particles} is based on the convergence of the point process of the branching random walk seen from its tip \cite{Madaule:2017}.
The correct scaling factors provided by the theorem are $n^{3\lambda/(2\vartheta)} \cdot (2 \exp(\lambda^2/2)/(4^{\lambda/\vartheta}))^n$ with $\vartheta = \sqrt{2 \log 2}$
and the limit distribution has a random series representation involving the limit process of branching random walk seen from its tip.

Finally, on the blue lines, it holds that the distribution of the martingale limit $Z(\lambda)$ is in the domain of attraction of a stable law
and hence $Z(\lambda)-Z_n(\lambda)$ exhibits stable-like fluctuations.
This regime is covered by Theorem \ref{Thm:fluctuations on partial Lambda^(1,2)}, which shows that
$n^{\frac\lambda{2\alpha\theta}}(Z(\lambda)-Z_n(\lambda))$ converges in distribution to a L\'evy process independent of $\F_\infty$
satisfying an invariance property similar to $\alpha$-stability (the details are explained in Example \ref{Exa:binary Gaussian 2})
evaluated at the limit $D_\infty$ of the derivative martingale,
where $\alpha = \sqrt{2 \log 2}/\theta \in (1,2)$ for $\theta \in (\frac{1}{2} \sqrt{2 \log 2},\sqrt{2\log 2})$.
\end{example}

\subsubsection*{Weak convergence almost surely and in probability.}
If $\zeta,\zeta_1,\zeta_2,\ldots$ are random variables taking values in $\C$,
we write
\begin{equation}	\label{eq:convergence weakly in probability}
\Law(\zeta_n | \F_n) \weaklyto \Law(\zeta | \F_{\infty})	\quad	\text{in $\Prob$-probability}
\end{equation}
(in words, the distribution of $\zeta_n$ given $\F_n$ converges weakly to the distribution of $\zeta$ given $\F_\infty$ in $\Prob$-probability)
if for every bounded continuous function $\phi:\C \to \R$ it holds that
$\E[\phi(\zeta_n)|\F_n]$ converges to $\E[\phi(\zeta)|\F_\infty]$ in $\Prob$-probability as $n \to \infty$.
Notice that \eqref{eq:convergence weakly in probability} implies that $\zeta_n$ converges to $\zeta$ in distribution as $n \to \infty$
as for any bounded and continuous function $\phi:\C \to \R$ and every strictly increasing sequence of positive integers,
we can extract a subsequence $(n_k)_{k \in \N}$ such that $\E[\phi(\zeta_{n_k})|\F_{n_k}]$ converges to $\E[\phi(\zeta)|\F_\infty]$ a.\,s.
Hence, by the dominated convergence theorem,
\begin{equation*}
\E[\phi(\zeta_{n_k})] = \E[\E[\phi(\zeta_{n_k})|\F_{n_k}]] \to \E[\E[\phi(\zeta)|\F_\infty]] = \E[\phi(\zeta)]	\quad	\text{as } k \to \infty.
\end{equation*}
This implies $\E[\phi(\zeta_n)] \to \E[\phi(\zeta)]$ as $n \to \infty$ and, therefore, $\zeta_n \distto \zeta$.

Analogously, we write
\begin{equation}	\label{eq:convergence weakly a.s.}
\Law(\zeta_n | \F_n) \weaklyto \Law(\zeta | \F_{\infty})	\quad	\text{$\Prob$-a.\,s.}
\end{equation}
(in words, the distribution of $\zeta_n$ given $\F_n$ converges a.\,s.\ to the distribution of $\zeta$ given $\F_\infty$)
if for every bounded continuous function $\phi:\C \to \R$ it holds that
$\E[\phi(\zeta_n)|\F_n]$ converges to $\E[\phi(\zeta)|\F_\infty]$ a.\,s.\ as $n \to \infty$.
Clearly, also $\Law(\zeta_n | \F_n) \weaklyto \Law(\zeta | \F_{\infty})$ $\Prob$-a.\,s.\ implies $\zeta_n \distto \zeta$.
\smallskip

Henceforth, we shall assume that $\lambda \in \Dom$ satisfies $\theta \geq 0$.
This simplifies the presentation of our results but is not a restriction of generality. Indeed, if $\theta < 0$,
we may replace the point process $\cZ = \sum_{j=1}^N \delta_{X_j}$ by $\sum_{j=1}^N \delta_{-X_j}$ and $\theta$ by $-\theta>0$.

\subsubsection*{Small $|\lambda|$: Gaussian fluctuations.}
Our first result is an extension of Corollary 1.1 in \cite{Iksanov+Kabluchko:2016} to the complex case.
For $\lambda \in \Dom$ with $m(\lambda) \not = 0$, we set
\begin{equation}	\label{eq:sigma_lambda^2}
\sigma_\lambda^2	\defeq \E[|Z_1(\lambda)-1|^2] = \E[|Z_1(\lambda)|^2] - 1	\in [0,\infty].
\end{equation}
Notice that $\sigma_\theta^2 < \infty$ implies $\sigma_\lambda^2 < \infty$
since $|Z_1(\lambda)| \leq \frac{m(\theta)}{|m(\lambda)|} Z_1(\theta)$.

Throughout the paper, we call a complex random variable $\zeta = \xi+\imag \tau$
with $\xi = \Real(\zeta)$ and $\tau = \Imag(\zeta)$ standard normal
if $\xi$ and $\tau$ are independent, identically distributed centered normal random variables
with $\E[|\zeta|^2] = \E[\xi^2] + \E[\tau^2] = 1$.

\begin{theorem}[Gaussian case]	\label{Thm:Cor 1.1 in Iksanov+Kabluchko:2016}
Assume that $\lambda \in \Dom$ with $m(\lambda) \not = 0$ is such that $\sigma_\theta^2 < \infty$,
$\sigma_\lambda^2 > 0$ and $m(2\theta) < |m(\lambda)|^2$.
Define
\begin{equation*}
m = \begin{cases}
m(2\theta)		&	\text{if } |m(2\lambda)| < m(2\theta),		\\
m(2\lambda)	&	\text{if } |m(2\lambda)| = m(2\theta).
\end{cases}
\end{equation*}
Then
\begin{equation}	\label{eq:Cor 1.1 in Iksanov+Kabluchko:2016 complex}
\Law\big({\textstyle \frac{m(\lambda)^n}{m^{n/2}}}(Z(\lambda)\!-\!Z_n(\lambda)) \big| \F_n\big)\weaklyto
\Law\Big({\textstyle \frac{\sigma_\lambda}{\sqrt{1-m(2\theta)/|m(\lambda)|^2}}} \sqrt{Z(2 \theta)} X \Big| \F_\infty\Big)	\text{ in $\Prob$-probability}
\end{equation}
where $X$ is independent of $\F_\infty$.
Here, $X$ is complex standard normal if $|m(2\lambda)| < m(2\theta)$
whereas $X$ is real standard normal if $|m(2\lambda)| = m(2\theta)$.

If, additionally, either $2\theta \in \Lambda$ or $Z(2\theta)=0$ a.\,s., then the weak convergence in $\Prob$-probability in \eqref{eq:Cor 1.1 in Iksanov+Kabluchko:2016 complex}
can be strengthened to weak convergence $\Prob$-a.\,s.
\end{theorem}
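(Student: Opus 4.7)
The plan is to apply a conditional central limit theorem to the branching decomposition
\begin{equation*}
m(\lambda)^n\bigl(Z(\lambda)-Z_n(\lambda)\bigr) = \sum_{|u|=n} e^{-\lambda S(u)}\bigl(Z^{(u)}(\lambda)-1\bigr),
\end{equation*}
where $(Z^{(u)}(\lambda))_{|u|=n}$ are i.i.d.\ copies of $Z(\lambda)$ independent of $\F_n$. Setting $\xi_{n,u}\defeq m^{-n/2}e^{-\lambda S(u)}(Z^{(u)}(\lambda)-1)$ and $T_n\defeq\sum_{|u|=n}\xi_{n,u}$, the $\xi_{n,u}$ form a conditionally independent, mean-zero complex triangular array, and I would show that $\Law(T_n|\F_n)$ converges weakly to the Gaussian limit in \eqref{eq:Cor 1.1 in Iksanov+Kabluchko:2016 complex}.

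My first step is to compute the conditional (pseudo-)covariance structure. Applying the one-step recursion $Z(\lambda)-1 = (Z_1(\lambda)-1) + m(\lambda)^{-1}\sum_{|u|=1} e^{-\lambda S(u)}(Z^{(u)}(\lambda)-1)$ together with conditional orthogonality of its two summands yields
\begin{equation*}
\E\bigl[|Z(\lambda)-1|^2\bigr]=\frac{\sigma_\lambda^2}{1-m(2\theta)/|m(\lambda)|^2},\qquad\E\bigl[(Z(\lambda)-1)^2\bigr]=\frac{\E[(Z_1(\lambda)-1)^2]}{1-m(2\lambda)/m(\lambda)^2},
\end{equation*}
and hence
\begin{align*}
\E[|T_n|^2|\F_n] &= \frac{m(2\theta)^n Z_n(2\theta)}{|m|^n}\cdot\frac{\sigma_\lambda^2}{1-m(2\theta)/|m(\lambda)|^2},\\
\E[T_n^2|\F_n] &= \frac{m(2\lambda)^n Z_n(2\lambda)}{m^n}\cdot\frac{\E[(Z_1(\lambda)-1)^2]}{1-m(2\lambda)/m(\lambda)^2}.
\end{align*}
In Case 1 ($|m(2\lambda)|<m(2\theta)$, $m=m(2\theta)$) the pseudo-variance $\E[T_n^2|\F_n]$ vanishes while $\E[|T_n|^2|\F_n]$ converges in $L^1$ to $\sigma_\lambda^2 Z(2\theta)/(1-m(2\theta)/|m(\lambda)|^2)$ via $Z_n(2\theta)\to Z(2\theta)$ in $L^1$ (using $\sigma_\theta^2<\infty$), forcing a complex standard normal limit scaled by $\sqrt{Z(2\theta)}$. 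In Case 2 ($|m(2\lambda)|=m(2\theta)$, $m=m(2\lambda)$) the equality forces $e^{-2i\eta x}$ to be a constant unimodular scalar on $\supp(\mu)$, whence a direct algebraic check yields $Z_n(2\lambda)=Z_n(2\theta)$ and, after a branch choice of $m^{n/2}$, each $\xi_{n,u}$ becomes real valued; both conditional second moments coincide and the limit reduces to a real standard normal along the appropriate phase direction.

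Next I would verify the conditional Lindeberg condition $\sum_{|u|=n}\E[|\xi_{n,u}|^2\mathbbm{1}_{|\xi_{n,u}|>\epsilon}|\F_n]\to 0$ in $\Prob$-probability. Splitting by whether $e^{-\theta S(u)}\leq K|m|^{n/2}$ or not, on the favorable event the truncation simplifies to $|Z(\lambda)-1|>\epsilon/K$ and its $L^2$-tail vanishes as $K\to\infty$ by $\E[|Z(\lambda)-1|^2]<\infty$. On the complementary event, the many-to-one formula reduces the expectation of $|m|^{-n}\sum_{|u|=n}e^{-2\theta S(u)}\mathbbm{1}_{e^{-\theta S(u)}>K|m|^{n/2}}$ to a tail probability for the $2\theta$-tilted random walk, which is small for large $K$. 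A conditional Lindeberg--Feller CLT for complex triangular arrays then delivers \eqref{eq:Cor 1.1 in Iksanov+Kabluchko:2016 complex} in $\Prob$-probability. For the a.s.\ strengthening I would observe that either $2\theta\in\Lambda$ (so $Z_n(2\theta)\to Z(2\theta)$ a.s.\ by Biggins' theorem) or $Z(2\theta)=0$ a.s.\ (so $Z_n(2\theta)\to 0$ a.s.\ by nonnegative martingale convergence); in either case the conditional covariances and the Lindeberg bound converge a.s., and the CLT applies pointwise in $\omega$.

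The main obstacle will be the Lindeberg condition under the minimal hypothesis $\sigma_\theta^2<\infty$: since the truncation threshold $\epsilon|m|^{n/2}e^{\theta S(u)}$ depends on the random weight $e^{\theta S(u)}$, one must simultaneously exploit uniform integrability of $|Z(\lambda)-1|^2$ and a many-to-one bound on the contribution of anomalously low particles—controlling the latter through the $2\theta$-tilted walk is the technical heart of the argument.
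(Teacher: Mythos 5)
Your overall strategy coincides with the paper's: decompose $m(\lambda)^n(Z(\lambda)-Z_n(\lambda))$ over generation $n$, compute the conditional second absolute moment and the conditional pseudo-moment, verify a conditional Lindeberg condition, and invoke a CLT for conditionally independent triangular arrays. The moment identities you state for $\E[|Z(\lambda)-1|^2]$, $\E[(Z(\lambda)-1)^2]$ and for $\E[|T_n|^2\mid\F_n]$, $\E[T_n^2\mid\F_n]$ are exactly those of the paper (Lemma \ref{Lem:L^2 norm of Z-1} and displays \eqref{eq:second absolute conditional moment}--\eqref{eq:second conditional moment}), and your treatment of Case 2 is correct.

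However, there is a genuine gap at the decisive step of Case 1: you assert that the pseudo-variance $\E[T_n^2\mid\F_n]=\E[(Z(\lambda)-1)^2]\cdot m(2\theta)^{-n}\sum_{|u|=n}e^{-2\lambda S(u)}$ ``vanishes'' because $|m(2\lambda)|<m(2\theta)$. This does not follow: the modulus of $m(2\theta)^{-n}\sum_{|u|=n}e^{-2\lambda S(u)}$ is bounded by $Z_n(2\theta)$, which converges to the generally non-zero $Z(2\theta)$, and the hypothesis $|m(2\lambda)|<m(2\theta)$ only controls the \emph{mean} of the complex weights, not the random sum itself ($Z_n(2\lambda)$ need not converge, and can be as large as $(m(2\theta)/|m(2\lambda)|)^n Z_n(2\theta)$ in modulus). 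Proving that this complex-weighted sum tends to $0$ in probability (resp.\ a.s.\ when $2\theta\in\Lambda$) is precisely the content of the paper's asymptotic cancellation result, Lemma \ref{Lem:cancellation}, whose proof is non-trivial and rests on the Topchi\u\i--Vatutin inequality for complex martingales (Lemma \ref{Lem:TV-ineq complex}) and the tail bound of Corollary \ref{Cor:tail bound sum of weighted iids}; your proposal supplies no substitute for it. A secondary weakness is the Lindeberg verification: your many-to-one reduction requires $\lim_{K}\limsup_n\Prob(\hat W_n>\log K)=0$ for the $2\theta$-tilted walk $\hat W_n$, but under the hypotheses of the theorem that walk may have zero or even positive drift (e.g.\ $\sqrt{2\log 2}/2\le\theta<\sqrt{\log 2}$ in the binary Gaussian example), in which case the bound fails; the paper instead uses Biggins' a.s.\ statement $\sup_{|u|=n}e^{-2\theta S(u)}/m(2\theta)^n\to0$ from \eqref{eq:Biggins:1998} together with monotone truncation of $\E[|Z-1|^2\,;\,|Z-1|^2>x]$, which handles all cases at once.
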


A perusal of the proof of Theorem \ref{Thm:Cor 1.1 in Iksanov+Kabluchko:2016} reveals that the theorem still holds when $\cZ(\R)=\infty$ with positive probability,
that is, our standing assumption $\cZ(\R)<\infty$ a.\,s.\ is not needed for this result.

Further, notice that the limit in Theorem \ref{Thm:Cor 1.1 in Iksanov+Kabluchko:2016} may vanish a.\,s.,
namely, if $Z(2\theta)=0$ a.\,s.
Equivalent conditions for $(Z_n(2\theta))_{n \in \N_0}$ to be uniformly integrable
or equivalently
\begin{equation}\label{eq:nonzero}
\Prob(Z(2\theta)>0) > 0
\end{equation}
are given in
\cite{Lyons:1997} and \cite[Theorem 1.3]{Alsmeyer+Iksanov:2009}.
For instance,
\begin{equation}	\label{eq:Lyons' conditions}
\E[Z_1(2\theta) \log_+(Z_1(2\theta))] < \infty
\quad	\text{and}	\quad
2\theta m'(2\theta)/m(2\theta) < \log(m(2\theta))
\end{equation}
imply \eqref{eq:nonzero}. In particular, the condition $2\theta \in \Lambda$ comfortably ensures \eqref{eq:nonzero}.

However,
there may be a region of $\lambda \in \Lambda$ for which $m(2\theta) < |m(\lambda)|^2$ and $\sigma_\theta^2 < \infty$
but $Z(2\theta)$ is degenerate at $0$ as it is the case in Example \ref{Exa:binary Gaussian}\footnote{
In the example, the corresponding region is $\theta^2 + \eta^2 < \log 2$ and $\theta \geq \frac12\sqrt{2 \log 2}$.
}.
In this situation,
the assertion of Theorem \ref{Thm:Cor 1.1 in Iksanov+Kabluchko:2016} holds
but the limit is degenerate at $0$.
This means that $2\theta \not \in \Lambda$.
Typically, there is a real parameter $\vartheta>0$ on the boundary of $\Lambda$ such that
\begin{equation}	\label{eq:Lyons' conditions violated}
\vartheta m'(\vartheta)/m(\vartheta) = \log(m(\vartheta))
\end{equation}
and either $2\theta=\vartheta$ or $2\theta > \vartheta$.
The second case leads to a non-Gaussian regime in which the extremal positions dominate the fluctuations on $Z_n(\lambda)$ around $Z(\lambda)$.
This case will be dealt with further below.
In the first case, under mild moment assumptions,
a polynomial correction factor is required and a different martingale limit figures,
namely, the limit of the \emph{derivative martingale}.
More precisely, we suppose that \eqref{eq:Lyons' conditions} is violated because $2\theta = \vartheta$
where $\vartheta > 0$ is as in \eqref{eq:Lyons' conditions violated}.
Then, for $n \in \N_0$ and $u \in \Gen_n$, we define
\begin{equation}	\label{eq:V(u)}
V(u) \defeq \vartheta S(u) + n \log (m(\vartheta)).
\end{equation}
By definition and by \eqref{eq:Lyons' conditions violated},
\begin{equation}	\label{eq:normalized_V}
\E\bigg[\sum_{|u|=1}e^{-V(u)}\bigg]=1
\quad	\text{and}	\quad
\E\bigg[\sum_{|u|=1}V(u)e^{-V(u)}\bigg]=0.
\end{equation}
The branching random walk $((V(u))_{u \in \Gen_n})_{n \geq 0}$ is said to be in the \emph{boundary case}.
Then $W_n \defeq \sum_{|u|=n} e^{-V(u)} = Z_n(\vartheta) \to 0$ a.\,s.,
but the derivative martingale
\begin{equation}	\label{eq:derivative martingale}
\partial W_n	\defeq	\sum_{|u|=n} e^{-V(u)} V(u)
\end{equation}
converges $\Prob$-a.\,s.\ under appropriate assumptions
to some random variable $D_\infty$ satisfying $D_\infty > 0$ a.\,s.\ on the survival set $\Surv$, see \cite{Biggins+Kyprianou:2004} for details.
Due to a result by A{\"{\i}}d{\'e}kon and Shi \cite[Theorem 1.1]{Aidekon+Shi:2014},
the limit $D_\infty$ also appears as the limit in probability of the rescaled martingale $W_n$, namely,
\begin{align}	\label{eq:Aidekon+Shi}
\sqrt{n} W_n \Probto \sqrt{\frac{2}{\pi \sigma^2}} D_\infty
\end{align}
where
\begin{equation}	\label{eq:sigma^2<infty Aidekon+Shi}
\sigma^2 = \E\bigg[\sum_{|u|=1}V(u)^2 e^{-V(u)}\bigg] \in (0,\infty).
\end{equation}
Relation \eqref{eq:Aidekon+Shi} holds subject to the conditions \eqref{eq:normalized_V}, \eqref{eq:sigma^2<infty Aidekon+Shi} and
\begin{equation}	\label{eq:Aidekon+Shi condition}
\E [W_1\log_+^2(W_1)]<\infty	\quad	\text{and}	\quad	\E [\tilde{W}_1\log_+(\tilde{W}_1)]<\infty
\end{equation}
where $\tilde{W}_1 \defeq \sum_{|u|=1} e^{-V(u)} V(u)_+$ and $x_\pm\defeq \max(\pm x,0)$.
For the case where \eqref{eq:Aidekon+Shi} holds, we have the following result.

\begin{theorem}[Gaussian boundary case]		\label{Thm:Gaussian boundary case}
Suppose that $\vartheta > 0$ satisfies \eqref{eq:Lyons' conditions violated}
and that \eqref{eq:normalized_V}, \eqref{eq:sigma^2<infty Aidekon+Shi} and \eqref{eq:Aidekon+Shi condition} hold
for $V(u) = \vartheta S(u) + |u|\log(m(\vartheta))$, $u \in \Gen$.
Further, assume that $\lambda \in \Dom$ with $m(\lambda) \not = 0$ is such that $\sigma_\theta^2 < \infty$,
$\sigma_\lambda^2 > 0$, $m(2\theta) < |m(\lambda)|^2$ and $2\theta = \vartheta$.
Define
\begin{equation*}
m = \begin{cases}
m(2\theta)		&	\text{if } |m(2\lambda)| < m(2\theta),		\\
m(2\lambda)	&	\text{if } |m(2\lambda)|=m(2\theta)
\end{cases}
\end{equation*}
and $a_n \defeq n^{1/4} \frac{m(\lambda)^n}{m^{n/2}}$ for $n \in \N$.
Then
\begin{align}	\label{eq:Gaussian boundary case}
\Law\big(a_n (Z(\lambda)-Z_n(\lambda)) \big| \F_n\big)
\weaklyto
\Law\Big({\textstyle \frac{\sqrt{\frac{2}{\pi}} \frac{\sigma_\lambda}{\sigma}}{\sqrt{1-m(2\theta)/|m(\lambda)|^2}}} \sqrt{D_\infty} X \Big| \F_\infty\Big)
\quad	\text{in $\Prob$-probability}
\end{align}
where $X$ is independent of $\F_\infty$.
Here, $X$ is complex standard normal if $|m(2\lambda)| < m(2\theta)$
whereas $X$ is real standard normal if $|m(2\lambda)| = m(2\theta)$.
\end{theorem}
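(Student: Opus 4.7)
The plan is to follow the proof strategy of Theorem \ref{Thm:Cor 1.1 in Iksanov+Kabluchko:2016}, the essential change being that the almost-sure convergence $Z_n(2\theta) \to Z(2\theta)$, which degenerates in the boundary case $2\theta=\vartheta$, is replaced throughout by the Aid\'ekon--Shi convergence $\sqrt{n}\,Z_n(2\theta) \Probto \sqrt{2/(\pi\sigma^2)}\,D_\infty$ from \eqref{eq:Aidekon+Shi}. This is precisely what produces the extra $n^{1/4}$ factor in $a_n$: the conditional second moment of the martingale decomposition picks up a factor $\sqrt{n}\,Z_n(2\theta)$ whose limit is non-degenerate, and the square root of $\sqrt{n}$ is $n^{1/4}$.

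Concretely, I would use the branching decomposition
\begin{equation*}
a_n(Z(\lambda)-Z_n(\lambda)) = \sum_{|u|=n} \xi_{u,n}, \qquad
\xi_{u,n} \defeq \frac{n^{1/4} e^{-\lambda S(u)}}{m^{n/2}}\bigl(Z^{(u)}(\lambda)-1\bigr),
\end{equation*}
in which the $Z^{(u)}(\lambda)$ are i.i.d.\ copies of $Z(\lambda)$ independent of $\F_n$, and apply a conditional complex Lindeberg--Feller CLT given $\F_n$. Orthogonality of the martingale increments of $(Z_k(\lambda))$ together with the hypothesis $m(2\theta)<|m(\lambda)|^2$ gives $\E[|Z(\lambda)-1|^2]=\sigma_\lambda^2/(1-m(2\theta)/|m(\lambda)|^2)$. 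Since $|m|=m(2\theta)$ in both cases defining $m$, the conditional variance
\begin{equation*}
V_n \defeq \sum_{|u|=n}\E\bigl[|\xi_{u,n}|^2\bigm|\F_n\bigr] = \sqrt{n}\,Z_n(2\theta)\cdot \frac{\sigma_\lambda^2}{1-m(2\theta)/|m(\lambda)|^2}
\end{equation*}
converges in probability to the square of the announced coefficient times $D_\infty$ by \eqref{eq:Aidekon+Shi}. The Lindeberg condition follows from the boundary-case extremal asymptotics $\min_{|u|=n} V(u) = \tfrac{3}{2}\log n + O_\Prob(1)$, which give $\max_{|u|=n}|\xi_{u,n}| = O_\Prob(n^{-1/2})\cdot \max_u|Z^{(u)}(\lambda)-1|$; combined with the tightness of $V_n$ and the $L^2$--integrability of $Z(\lambda)-1$, the standard truncation argument applies.

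The complex CLT further requires control of the conditional pseudo-covariance
\begin{equation*}
\widetilde V_n \defeq \sum_{|u|=n}\E\bigl[\xi_{u,n}^2\bigm|\F_n\bigr] = \sqrt{n}\Bigl(\frac{m(2\lambda)}{m}\Bigr)^n Z_n(2\lambda)\cdot \E\bigl[(Z(\lambda)-1)^2\bigr],
\end{equation*}
and this accounts for the two cases in the theorem. When $|m(2\lambda)|<m(2\theta)=m$, the goal is $\widetilde V_n\Probto 0$, producing a rotationally invariant limit and hence the complex standard normal factor $X$. When $|m(2\lambda)|=m(2\theta)$, the choice $m=m(2\lambda)$ makes $(m(2\lambda)/m)^n\equiv 1$, $|\widetilde V_n|$ matches $V_n$ asymptotically, and the conditional limit is forced onto a single real direction fixed by $\arg(m(2\lambda))$, yielding the real standard normal $X$.

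The main obstacle is the pseudo-covariance analysis in the regime $|m(2\lambda)|<m(2\theta)$: the naive triangle bound $|Z_n(2\lambda)| \leq (m(2\theta)/|m(2\lambda)|)^n Z_n(2\theta)$ only gives $|\widetilde V_n| \leq C\,\sqrt{n}\,Z_n(2\theta)$, which is tight but not null in the limit. The vanishing has to be extracted from the oscillations of $e^{-2\imag\eta S(u)}$ in $(m(2\lambda)/m(2\theta))^n Z_n(2\lambda)$; a natural route is via the point-process convergence of the branching random walk seen from its tip (Madaule), which identifies the limiting phase behaviour of the tilted measure and supplies the additional decay by a factor $|m(2\lambda)|/m(2\theta)<1$. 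A secondary point is that the conclusion is a weak convergence in $\Prob$-probability rather than almost-sure, which is dictated by the mode of convergence in \eqref{eq:Aidekon+Shi}.
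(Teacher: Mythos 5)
Your overall architecture coincides with the paper's: the same branching decomposition \eqref{eq:decomposition nth gen}, a conditional Lindeberg--Feller argument given $\F_n$, the conditional variance $\sqrt{n}\,Z_n(2\theta)\cdot\sigma_\lambda^2/(1-m(2\theta)/|m(\lambda)|^2)$ handled by \eqref{eq:Aidekon+Shi}, the Lindeberg condition from the tightness of $n^{3/2}\sup_{|u|=n}e^{-V(u)}$, and the split into the two cases according to whether the conditional pseudo-covariance survives. Your treatment of the case $|m(2\lambda)|=m(2\theta)$ also matches the paper's (there everything is shown to be real, so the two conditional second moments coincide).

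The genuine gap is exactly the step you flag as ``the main obstacle'': proving $\widetilde V_n=\sqrt{n}\sum_{|u|=n}Y_u^2\cdot\E[(Z-1)^2]\Probto 0$ when $|m(2\lambda)|<m(2\theta)$. Your proposed route via Madaule's point process of the branching random walk seen from its tip will not work here: in the boundary case the Seneta--Heyde mass $\sqrt{n}\sum_{|u|=n}e^{-V(u)}$ is \emph{not} carried by the particles near the minimum (those contribute only $O(n^{-3/2})$ each, hence $o(n^{-1/2})$ in total), so the tip point process gives no control over the phases $e^{-2\imag\eta S(u)}$ of the bulk of the sum. Moreover there is no ``additional decay by a factor $|m(2\lambda)|/m(2\theta)$'' hiding in $\widetilde V_n$; after normalization that factor cancels and one is left with $\sqrt{n}\,m(2\theta)^{-n}\sum_{|u|=n}e^{-2\lambda S(u)}$, whose modulus is exactly $\sqrt{n}\,Z_n(2\theta)$, which is tight and non-null. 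The paper closes this gap with a dedicated asymptotic cancellation result (Lemma \ref{Lem:cancellation}(b), condition (ii)): setting $L(u)=Y_u^2$, one has $\E[\sum_{|u|=1}|L(u)|]=1$ and $a=\E[\sum_{|u|=1}L(u)]=m(2\lambda)/m(2\theta)$ with $|a|<1$; one then writes $Z_n-a^kZ_{n-k}$, observes that given $\F_{n-k}$ this is a weighted sum of centered i.i.d.\ complex variables with relative maximal weight $\sup_{|v|=n-k}|L(v)|/\sum_{|v|=n-k}|L(v)|\Probto 0$ (a consequence of \eqref{eq:Aidekon+Shi} and Proposition \ref{Prop:minimal position}), and applies a tail bound derived from the complex Topchi\u\i--Vatutin inequality (Corollary \ref{Cor:tail bound sum of weighted iids}); the residual term $a^kZ_{n-k}$ is killed by $|a|^k\to0$. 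Without this lemma, or an equivalent quantitative cancellation argument, your proof is incomplete at its decisive point.
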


\subsubsection*{The regime in which the extremal positions dominate.}
Again suppose that $\vartheta > 0$ satisfies \eqref{eq:Lyons' conditions violated}
and that \eqref{eq:normalized_V}, \eqref{eq:sigma^2<infty Aidekon+Shi} and \eqref{eq:Aidekon+Shi condition} hold
for $V(u) = \vartheta S(u) + |u|\log(m(\vartheta))$,
$u \in \Gen$. Further, assume that $\lambda \in \Lambda$, but $2\theta > \vartheta$.
Then, typically, $Z_n(2\theta) \to 0$ because \eqref{eq:Lyons' conditions} is violated because
\begin{equation*}
2\theta m'(2\theta)/m(2\theta) > \log(m(2\theta)).
\end{equation*}
It is known, see e.g.\ \cite{Shi:2015}, that $\min_{|u|=n} V(u)$ is of the order $\frac32 \log n$ as $n \to \infty$.
It will turn out that this is too slow for a result in the spirit of Theorem \ref{Thm:Cor 1.1 in Iksanov+Kabluchko:2016}
in the sense that the contributions of the particles with small positions in the $n$th generation to $Z(\lambda)-Z_n(\lambda)$ are substantial,
and hence no (conditionally) infinitely divisible limit distribution can be expected.
Instead, the description of the fluctuations $Z(\lambda)-Z_n(\lambda)$ will follow from Madaule's work \cite{Madaule:2017},
where the behavior of the point processes
\begin{equation*}
\mu_n \defeq \sum_{|u|=n} \delta_{V_n(u)}
\end{equation*}
with $V_n(u) \defeq V(u) - \tfrac32 \log n$ was studied.
For the reader's convenience, we state in detail a consequence of the main result in \cite{Madaule:2017}.

\begin{proposition}	\label{Prop:Madaule's theorem}
Suppose the branching random walk $(V(u))_{u \in \Gen}$
satisfies \eqref{eq:normalized_V}, \eqref{eq:sigma^2<infty Aidekon+Shi} and \eqref{eq:Aidekon+Shi condition}.
Further, suppose that
\begin{align*}	\tag{A1}\label{ass:non-latice}
\text{The branching random walk $(V(u))_{u \in \Gen}$ is non-lattice.}
\end{align*}
Then there is a point process $\mu_{\infty} = \sum_{k \in \N} \delta_{P_k}$ such that $\mu_{\infty}((-\infty,0])$ is a.\,s.\ finite and $\mu_n$ converges in distribution to $\mu_{\infty}$
(in the space of locally finite point measures equipped with the topology of vague convergence).
\end{proposition}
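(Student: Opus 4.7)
The plan is to derive this proposition directly from the main convergence theorem of \cite{Madaule:2017}, after which the remaining claim about $\mu_\infty((-\infty,0])$ is a short integrability computation. Madaule's theorem states that, under precisely the hypotheses \eqref{eq:normalized_V}, \eqref{eq:sigma^2<infty Aidekon+Shi}, \eqref{eq:Aidekon+Shi condition}, and \eqref{ass:non-latice}, the point processes $\mu_n$ converge in distribution, in the space of locally finite point measures on $\R$ with the vague topology, to a random point process $\mu_\infty$ of shifted-decorated-Poisson type. More precisely, conditionally on the derivative-martingale limit $D_\infty$ from \eqref{eq:derivative martingale}, $\mu_\infty$ admits the representation
\begin{equation*}
\mu_\infty = \sum_i \sum_j \delta_{\xi_i + \Delta_{i,j}},
\end{equation*}
where $\{\xi_i\}_i$ is a Poisson point process on $\R$ with intensity $c^{\ast} D_\infty\, e^{x}\,\dx$ for an explicit constant $c^{\ast}>0$, and $(\sum_j \delta_{\Delta_{i,j}})_i$ is an i.i.d.\ family of a.s.\ finite decoration point processes supported on $[0,\infty)$, independent of the $\xi_i$'s. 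Enumerating the atoms of $\mu_\infty$ as $P_1, P_2, \ldots$ yields the representation claimed in the statement.

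For the second assertion I would argue as follows. Because every decoration atom $\Delta_{i,j}$ is non-negative, each atom $\xi_i+\Delta_{i,j}$ of $\mu_\infty$ lies at or above the underlying Poisson atom $\xi_i$, so
\begin{equation*}
\mu_\infty((-\infty,0]) \;\leq\; \sum_i M_i \, \1_{\{\xi_i \leq 0\}},
\end{equation*}
where $M_i$ denotes the (a.s.\ finite) total mass of the $i$-th decoration. Conditionally on $D_\infty$, the number of atoms $\xi_i$ falling in $(-\infty,0]$ is Poisson distributed with parameter $c^{\ast} D_\infty \int_{-\infty}^{0} e^{x}\,\dx = c^{\ast} D_\infty$, which is finite almost surely; combined with the a.s.\ finiteness of each $M_i$, this yields the claimed a.s.\ finiteness.

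All the genuine probabilistic difficulty lives in Madaule's theorem itself, whose proof proceeds via a spinal (change of measure) decomposition, a truncation of the boundary-case martingale, and intricate second-moment estimates in the SDPPP framework. The hard part here is not the present proposition but importing that result; once it is granted, the work reduces to checking that Madaule's hypotheses coincide with ours — which they do by direct inspection — and to the short integrability calculation above.
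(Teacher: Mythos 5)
Your route is the same as the paper's: the paper's entire ``proof'' of this proposition is the one-line citation of Madaule's Theorem 1.1, and your hypothesis matching plus the use of the decorated-Poisson (SDPPP) representation of $\mu_\infty$ is exactly what that citation encodes. The one substantive piece you add --- the justification that $\mu_\infty((-\infty,0])<\infty$ a.s. --- contains a flaw. You bound $\mu_\infty((-\infty,0])$ by $\sum_i M_i\,\1_{\{\xi_i\le 0\}}$ and invoke ``the a.s.\ finiteness of each $M_i$'', where $M_i$ is the \emph{total} mass of the $i$-th decoration. But the decoration in Madaule's theorem is not a finite point process: it is only locally finite, with a.s.\ infinitely many atoms accumulating at $+\infty$ (it arises as the limit of the branching random walk seen from its minimal particle, and the number of particles within distance $K$ of the minimum tends to infinity with $K$). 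So $M_i=\infty$ a.s.\ and your bound is vacuous whenever some $\xi_i\le 0$, which happens with positive probability.

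The fix is immediate and uses only ingredients you already have. Since each decoration is supported on $[0,\infty)$, an atom $\xi_i+\Delta_{i,j}$ of $\mu_\infty$ lies in $(-\infty,0]$ only if $\xi_i\le 0$ and $\Delta_{i,j}\in[0,-\xi_i]$. Conditionally on $D_\infty$, only finitely many $\xi_i$ fall in $(-\infty,0]$ (their number is Poisson with finite mean $c^{\ast}D_\infty$, as you computed), and for each such $i$ the decoration puts only finitely many atoms in the compact interval $[0,-\xi_i]$ by local finiteness. Hence $\mu_\infty((-\infty,0])=\sum_{i:\,\xi_i\le 0}\#\{j:\Delta_{i,j}\le -\xi_i\}<\infty$ a.s. (Alternatively, one can bypass the decoration structure entirely and deduce the finiteness from the tightness of $\min_{|u|=n}V(u)-\frac32\log n$ via the Portmanteau theorem, which is the device the paper uses later in Lemma 5.1.)
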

\begin{proof}[Source]
This can be derived from \cite[Theorem 1.1]{Madaule:2017}.
\end{proof}

Let $Z^{(1)}(\lambda), Z^{(2)}(\lambda), \ldots$ denote independent random variables with the same distribution
as $Z(\lambda)-1$ which are independent of $\mu_\infty$.
We consider the following series
\begin{align}	\label{eq:extremal sum}
X_{\mathrm{ext}}\defeq \sum_{k}e^{-\frac{\lambda P_k^\ast}{\vartheta}}Z^{(k)}(\lambda)=\lim_{n\to\infty}\sum_{k=1}^n e^{-\frac{\lambda P_k^\ast}{\vartheta}}Z^{(k)}(\lambda),
\end{align}
where $-\infty < P_1^\ast \leq P_2^\ast \leq \ldots$ are the atoms of $\mu_\infty$ arranged in increasing order.
\begin{theorem}[Domination by extremal positions]		\label{Thm:domination of extremal particles}
Suppose that $\vartheta > 0$ satisfies \eqref{eq:Lyons' conditions violated}
and that \eqref{eq:normalized_V}, \eqref{eq:sigma^2<infty Aidekon+Shi}, \eqref{eq:Aidekon+Shi condition} and \eqref{ass:non-latice} hold
for $V(u) = \vartheta S(u) + |u|\log(m(\vartheta))$, $u \in \Gen$.
Let $\lambda \in \Lambda$ and assume that $\theta \in (\frac\vartheta2,\vartheta)$.
If there is $p \in (\frac{\vartheta}{\theta},2]$ satisfying $\E[|Z(\lambda)|^p]\!\!~<~\!\!\infty$,
then the series $X_{\mathrm{ext}}$ defined by \eqref{eq:extremal sum} converges a.\,s.\ to a non-degenerate limit. Moreover,
\begin{equation*}
n^{\tfrac{3\lambda}{2 \vartheta}} \Big(\frac{m(\lambda)}{m(\vartheta)^{\lambda/\vartheta}}\Big)^{\!n} (Z(\lambda)-Z_n(\lambda))
\distto X_{\mathrm{ext}}.
\end{equation*}
\end{theorem}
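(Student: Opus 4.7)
My plan is to reduce the convergence statement to a result about a sum indexed by the branching random walk in the boundary-case parametrization, and invoke Proposition \ref{Prop:Madaule's theorem} together with the independence of the subtree-based martingale limits.

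\textbf{Algebraic reduction and convergence of $X_{\mathrm{ext}}$.} By the branching property at generation $n$,
\begin{equation*}
Z(\lambda)-Z_n(\lambda) \;=\; m(\lambda)^{-n}\sum_{|u|=n} e^{-\lambda S(u)}\bigl(Z^{(u)}(\lambda)-1\bigr),
\end{equation*}
where $(Z^{(u)}(\lambda))_{|u|=n}$ are i.i.d.\ copies of $Z(\lambda)$, independent of $\F_n$. Inserting $S(u) = (V(u)-n\log m(\vartheta))/\vartheta$ and $V(u)=V_n(u)+\tfrac{3}{2}\log n$, a direct computation shows that multiplication by $n^{3\lambda/(2\vartheta)}\,(m(\lambda)/m(\vartheta)^{\lambda/\vartheta})^n$ identifies the left-hand side of the theorem with
\begin{equation*}
T_n \;\defeq\; \sum_{|u|=n} e^{-\lambda V_n(u)/\vartheta}\bigl(Z^{(u)}(\lambda)-1\bigr),
\end{equation*}
so the theorem reduces to $T_n\distto X_{\mathrm{ext}}$. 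Conditionally on $\mu_\infty$, the series defining $X_{\mathrm{ext}}$ is a sum of independent centered complex variables with weights $e^{-\lambda P_k^*/\vartheta}$; by the von Bahr--Esseen inequality, a.s.\ convergence follows from $\sum_k e^{-p\theta P_k^*/\vartheta}<\infty$ a.s. Since the intensity of $\mu_\infty$ at $+\infty$ behaves like $Ce^x\,dx$ (a consequence of the decorated-Poisson structure in Madaule's construction), this holds under the assumption $p\theta/\vartheta > 1$, so $X_{\mathrm{ext}}$ is well-defined.

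\textbf{Truncated convergence via Madaule.} Fix a continuity point $T>0$ of $\mu_\infty$ and split $T_n = T_n^{\leq T} + R_n(T)$, the first sum being over $|u|=n$ with $V_n(u)\leq T$. Proposition \ref{Prop:Madaule's theorem} yields $\mu_n|_{(-\infty,T]} \distto \mu_\infty|_{(-\infty,T]}$, and because the $(Z^{(u)}(\lambda))_{|u|=n}$ are i.i.d.\ and independent of $\F_n$ (hence of $\mu_n$), joint convergence of the truncated point process with its i.i.d.\ decorations gives
\begin{equation*}
T_n^{\leq T}\;\distto\; X_{\mathrm{ext}}(T) \;\defeq\; \sum_{k:\,P_k^*\leq T} e^{-\lambda P_k^*/\vartheta}\, Z^{(k)}(\lambda),
\end{equation*}
and $X_{\mathrm{ext}}(T)\to X_{\mathrm{ext}}$ a.s.\ as $T\to\infty$ by the previous step.

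\textbf{Tail control, the main obstacle.} It remains to show $R_n(T)\to 0$ in probability as $T\to\infty$ uniformly in $n$. Conditioning on $\F_n$ and applying von Bahr--Esseen to the independent centered summands,
\begin{equation*}
\E\bigl[|R_n(T)|^p \bigm| \F_n\bigr] \;\leq\; 2\,\E\bigl|Z(\lambda)-1\bigr|^p\; B_n(T),
\qquad
B_n(T)\;\defeq\;\sum_{|u|=n,\,V_n(u)>T} e^{-qV_n(u)},
\end{equation*}
with $q=p\theta/\vartheta>1$. The main obstacle here is that a straightforward many-to-one computation shows $\E[B_n(T)]$ diverges in $n$ (of order $\sqrt{n}\,e^{-(q-1)T}$), so a Markov bound in expectation is useless; one must instead exploit that, as a random variable, $B_n(T)$ is tight and in fact converges in distribution to $B_\infty(T)\defeq\int_T^\infty e^{-qx}\,\mu_\infty(dx)$. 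I would establish this by a secondary truncation at $V_n(u)\leq T'$: the bulk $\sum_{|u|=n,\,T<V_n(u)\leq T'}e^{-qV_n(u)}$ is handled by Madaule's vague convergence against a compactly supported test function, while the residual $\sum_{|u|=n,\,V_n(u)>T'}e^{-qV_n(u)}$ needs a uniform-in-$n$ tightness estimate obtained from refined tail estimates on BRW particles deep above the minimum in the boundary case. Given this convergence, bounded convergence applied to $\E[\min(\epsilon^{-p}CB_n(T),1)]$ yields $\limsup_n \Prob(|R_n(T)|>\epsilon) \leq \E[\min(\epsilon^{-p}CB_\infty(T),1)]$, and then sending $T\to\infty$ uses $B_\infty(T)\to 0$ a.s.\ to close the argument.
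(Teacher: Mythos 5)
Your proposal is correct in substance and follows essentially the same route as the paper: reduce to $T_n=\sum_{|u|=n}e^{-\lambda V_n(u)/\vartheta}([Z(\lambda)]_u-1)$, prove convergence of the part with $V_n(u)$ bounded above via Madaule's point-process convergence enriched by i.i.d.\ marks, and kill the right tail by a conditional $p$-th moment bound. Two remarks on where your sketch leans on unproven input. First, the ``uniform-in-$n$ tightness estimate for particles deep above the minimum'' that you defer to is precisely \cite[Proposition~2.1]{Madaule:2017}: the laws of $\sum_{|u|=n}e^{-\beta V_n(u)}$, $n\in\N$, are tight for every $\beta>1$. Once you have this, your detour through the distributional convergence $B_n(T)\distto B_\infty(T)$ is unnecessary: choosing $1<\beta<q=p\theta/\vartheta$ one has the deterministic domination $B_n(T)=\sum_{V_n(u)>T}e^{-qV_n(u)}\leq e^{-(q-\beta)T}\sum_{|u|=n}e^{-\beta V_n(u)}$, so on the event $\{\sum_{|u|=n}e^{-\beta V_n(u)}\leq M\}$ (which has probability $\geq 1-\varepsilon$ uniformly in $n$) Markov's inequality gives $\Prob(|R_n(T)|>\delta)\leq \varepsilon + C\delta^{-p}Me^{-(q-\beta)T}$, which is the paper's Lemma~\ref{Lem:remainder estimate}. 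Second, your justification of $\sum_k e^{-qP_k^\ast}<\infty$ a.\,s.\ via ``the intensity of $\mu_\infty$ behaves like $Ce^x\,\dx$'' is shaky as stated: the unconditional intensity is infinite since $\E[D_\infty]=\infty$, and even conditionally one must control the decorations. The clean argument is again the same tightness: combine the convergence $\int f\,\dmu_n\distto\int f\,\dmu_\infty$ for $f$ vanishing near $+\infty$ (which needs A{\"i}d{\'e}kon's left-tail control, as in Lemma~\ref{Lem:convergence of mu_n left-hand compactification}) with the Portmanteau theorem and Madaule's Proposition~2.1 to get $\sum_k e^{-qP_k}<\infty$ a.\,s., after which your conditional martingale/von Bahr--Esseen argument for the a.\,s.\ convergence of $X_{\mathrm{ext}}$ goes through exactly as in the paper (which uses its complex Topchi\u\i--Vatutin inequality in place of von Bahr--Esseen). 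Everything else --- the algebraic identification of the scaling, the sharp versus smoothed truncation, and the marked point process step --- matches the paper's proof.
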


Sufficient conditions for $\E[|Z(\lambda)|^p]<\infty$, which are easy to check, are \eqref{eq:gamma moment Z_1(theta)} and \eqref{eq:contraction condition}.
Further sufficient conditions for $\E[|Z(\lambda)|^p]<\infty$ are given in Proposition \ref{Prop:heavy tail bound Z(lambda)} below.
Finally, we should mention the upcoming paper \cite{Iksanov+al:2018+} in which conditions for the convergence in $L^p$ for $(Z_n(\lambda))_{n \in \N_0}$ are provided.

\subsubsection*{The boundary of $\Lambda$: Stable fluctuations.}

It has been shown in \cite[Theorem 2.1]{Kolesko+Meiners:2017}
that the martingale $Z_n(\lambda)$ converges on a part of the boundary of $\Lambda$.
More precisely, consider the condition
\begin{equation*}	\tag{C1}	\label{eq:C1}
\frac{m(\alpha\theta)}{|m(\lambda)|^\alpha} = 1
\quad	\text{and}	\quad
\frac{\theta m'(\theta \alpha)}{|m(\lambda)|^\alpha} = \log(|m(\lambda)|)
\end{equation*}and define
\begin{equation*}
\partial \Lambda^{(1,2)}	\defeq	\{\lambda \in \partial \Lambda \cap \Dom: \text{\eqref{eq:C1} holds with } \alpha \in (1,2)\}.
\end{equation*}
Theorem 2.1 in \cite{Kolesko+Meiners:2017} says that if $\lambda \in \Dom$ satisfies \eqref{eq:C1}
(actually, Theorem 2.1 in \cite{Kolesko+Meiners:2017} requires a weaker assumption)
and if $\E[|Z_1(\lambda)|^\alpha \log_+^{2+\epsilon}(|Z_1(\lambda)|)] < \infty$ for some $\epsilon > 0$,
then $(Z_n(\lambda))_{n \in \N_0}$ converges a.\,s.\ and in $L^p$ for every $p<\alpha$ to some limit $Z(\lambda)$ satisfying $\E[Z(\lambda)]=1$.
If an additional moment assumption holds, then a simplified version of the proof of Theorem 2.1 in \cite{Kolesko+Meiners:2017} gives the following result.

\begin{proposition}	\label{Prop:heavy tail bound Z(lambda)}
Suppose that $\lambda \in \Dom$ satisfies
\begin{equation*}
\frac{m(\alpha\theta)}{|m(\lambda)|^\alpha} = 1
\quad	\text{and}	\quad
\frac{\theta m'(\theta \alpha)}{|m(\lambda)|^\alpha} \leq \log(|m(\lambda)|)
\end{equation*}
for some $\alpha \in (1,2)$. If, additionally, $\E[|Z_1(\lambda)|^{\gamma}]<\infty$ for some $\alpha<\gamma \leq 2$,
then $Z_n(\lambda) \to Z(\lambda)$ in $L^{p}$ for all $p<\alpha$ and there exists a constant $C > 0$ such that
\begin{equation}	\label{eq:heavy tail bound Z(lambda)}
\Prob(|Z(\lambda)| \geq t) \leq C t^{-\alpha}
\end{equation}
for all $t > 0$.
\end{proposition}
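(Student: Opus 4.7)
The $L^p$-convergence statement for $p<\alpha$ is a direct consequence of \cite[Theorem~2.1]{Kolesko+Meiners:2017}, since our moment assumption $\E[|Z_1(\lambda)|^\gamma]<\infty$ with $\gamma\in(\alpha,2]$ is strictly stronger than the log-moment hypothesis imposed there. The real content is therefore the tail bound \eqref{eq:heavy tail bound Z(lambda)}, which I would deduce from the fixed-point identity
\begin{equation*}
Z(\lambda) \;=\; \sum_{|u|=1} T_u\,Z^{(u)}(\lambda), \qquad T_u\defeq \frac{e^{-\lambda S(u)}}{m(\lambda)},
\end{equation*}
in which, given $\F_1$, the $Z^{(u)}(\lambda)$ are i.i.d.\ copies of $Z(\lambda)$. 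The hypothesis rewrites as $\E\bigl[\sum_{|u|=1}|T_u|^\alpha\bigr]=m(\alpha\theta)/|m(\lambda)|^\alpha=1$, so for the iterated weights $W_u\defeq e^{-\lambda S(u)}/m(\lambda)^{|u|}$ the process $\sum_{|u|=n}|W_u|^\alpha$ is a mean-one nonnegative martingale.

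The plan is a stopping-line induction adapted to the complex critical smoothing equation. For $t\ge 1$, let $\mathcal{L}_t$ denote the optional line of individuals at which $|W_u|$ first drops below $1/t$; optional stopping yields $\E\bigl[\sum_{u\in\mathcal{L}_t}|W_u|^\alpha\bigr]\le 1$, and iteration of the fixed-point identity along $\mathcal{L}_t$ gives
\begin{equation*}
Z(\lambda) \;=\; \sum_{u\in\mathcal{L}_t} W_u\,Z^{(u)}(\lambda),
\end{equation*}
with the $Z^{(u)}(\lambda)$ i.i.d.\ copies of $Z(\lambda)$ independent of $\F_{\mathcal{L}_t}$. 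Writing $g(t)\defeq\Prob(|Z(\lambda)|>t)$, I would split
\begin{equation*}
g(2t) \;\le\; \Prob\bigl(\exists\, u\in\mathcal{L}_t:\,|W_u\, Z^{(u)}(\lambda)|>t\bigr)\;+\;\Prob\Bigl(\Bigl|\textstyle\sum_{u\in\mathcal{L}_t}W_u\, Z^{(u)}(\lambda)\mathbf{1}_{\{|W_uZ^{(u)}(\lambda)|\le t\}}\Bigr|>t\Bigr).
\end{equation*}
A union bound controls the first summand by $\E\bigl[\sum_u g(t/|W_u|)\bigr]$, which, once a prior tail bound is in force, is of order $t^{-\alpha}\E\bigl[\sum_u|W_u|^\alpha\bigr]\le t^{-\alpha}$. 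For the second, I would apply the von~Bahr-Esseen inequality conditionally on $\F_{\mathcal{L}_t}$ with some exponent $p\in(\alpha,\gamma]$; combined with $\E\bigl[|Z(\lambda)|^p\mathbf{1}_{\{|Z(\lambda)|\le s\}}\bigr]\lesssim s^{p-\alpha}$ (an elementary consequence of the tail bound in the range where it is already known) and with the uniform bound $|W_u|\le 1/t$ on $\mathcal{L}_t$, a final Markov step again produces a contribution of order $t^{-\alpha}$.

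Two subtle points deserve attention. First, the summands $W_u Z^{(u)}(\lambda)$ are not centred, so one must subtract their conditional means and absorb the residual $\sum_u W_u$ through the already-established $L^p$-convergence of $Z_n(\lambda)$. Second, and more importantly, the scheme is inherently inductive: one must bootstrap from the starting estimate $g(t)\le C_p t^{-p}$ valid for every $p<\alpha$ (immediate from $L^p$-convergence and Markov's inequality) up to the sharp exponent~$\alpha$. The standard device is to work with $H(t)\defeq\sup_{s\ge t}s^\alpha g(s)$ and to prove a self-improving inequality of the form $H(2t)\le A+B(t)H(t)$ with $B(t)\to 0$ as $t\to\infty$. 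It is precisely here that the strengthened assumption $\gamma>\alpha$ is decisive, because it yields the smallness $\E\bigl[\sum_{u\in\mathcal{L}_t}|W_u|^p\bigr]\le t^{-(p-\alpha)}$ needed to drive $B(t)$ to zero and thereby close the induction.
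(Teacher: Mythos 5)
Your route is genuinely different from the paper's. The paper deduces the tail bound from the proof of Theorem 2.1 in Kolesko--Meiners: one truncates the martingale itself by killing particles whose weights $L(u)=e^{-\lambda S(u)}/m(\lambda)^{|u|}$ have exceeded $t$ along their ancestral line, bounds $\sup_n\E[|Z_n^{(t)}-1|^{\gamma}]$ via the complex Topchi\u\i--Vatutin inequality (Lemma \ref{Lem:TV-ineq complex}) with $\phi(x)=|x|^{\gamma}$ --- this is where $\gamma>\alpha$ enters, through $|L(u)|^{\gamma}\le t^{\gamma-\alpha}|L(u)|^{\alpha}$ on the truncated tree --- and separately bounds the probability that the truncation is active by $\Prob(\sup_u|L(u)|>t)\le t^{-\alpha}$, using optional stopping of the mean-one martingale $\sum_{|u|=n}|L(u)|^{\alpha}$. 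You instead propose a one-large-jump decomposition along a stopping line together with a Goldie/Liu-type bootstrap on $H(t)=\sup_{s\ge t}s^{\alpha}g(s)$.

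The bootstrap does not close, and this is a genuine gap rather than a technicality: you are exactly at criticality, $\E\bigl[\sum_{|u|=1}|T_u|^{\alpha}\bigr]=1$, so $\E\bigl[\sum_{u\in\mathcal{L}_t}|W_u|^{\alpha}\bigr]$ is not small --- since $|W_u|^{\alpha}=e^{-V(u)}$ and $\mathcal{L}_t$ is the first-passage line of $V$ over $\alpha\log t$, the many-to-one lemma shows this expectation equals the probability that the associated (centered or positively drifting) random walk ever exceeds $\alpha\log t$, which is $1$. Feeding the induction hypothesis $g(s)\le H\,s^{-\alpha}$ into your union-bound term gives $\E\bigl[\sum_{u\in\mathcal{L}_t}g(t/|W_u|)\bigr]\le H\,t^{-\alpha}\,\E\bigl[\sum_u|W_u|^{\alpha}\bigr]$, so after multiplying by $(2t)^{\alpha}$ this term alone contributes $2^{\alpha}H(t)$ to the recursion; the coefficient of $H$ is at least $2^{\alpha}>1$ and certainly does not tend to $0$. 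The von Bahr--Esseen term fares no better: the smallness $\E\bigl[\sum_{u\in\mathcal{L}_t}|W_u|^{p}\bigr]\le t^{-(p-\alpha)}$ you invoke is exactly cancelled by the growth $\E\bigl[|Z|^{p};|Z|\le s\bigr]\lesssim H\,s^{p-\alpha}$ of the truncated moment at level $s=t/|W_u|$, because $|W_u|^{p}(t/|W_u|)^{p-\alpha}=t^{p-\alpha}|W_u|^{\alpha}$, leaving again a contribution of order $H\,t^{-\alpha}$ with an $O(1)$ constant. Thus your self-improving inequality has the form $H(2t)\le A+B\,H(t)$ with $B>1$, which yields nothing. (A secondary point: $H(t)$ is not known to be finite a priori from $g(s)\le C_p s^{-p}$, $p<\alpha$, so one would in any case have to cap the supremum at a finite level and pass to the limit.) This coefficient obstruction is precisely why the subcritical cut-off argument, which works when $\E\bigl[\sum_u|T_u|^{\alpha}\bigr]<1$, is replaced at criticality by the truncated-martingale argument the paper uses.
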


For the rest of this section, we assume that $\lambda \in \partial \Lambda^{(1,2)}$ and that $\alpha \in (1,2)$ satisfies \eqref{eq:C1}.
Notice that if $\vartheta > 0$ is defined via \eqref{eq:Lyons' conditions violated},
then $\alpha \theta = \vartheta$ in the given situation.
To determine the fluctuations of $Z_n(\lambda)$ around $Z(\lambda)$ in this setting,
we require stronger assumptions than those of Theorem 2.1 in \cite{Kolesko+Meiners:2017}.
First of all, as before, we define $V(u)$ via \eqref{eq:V(u)}, i.e., $V(u) \defeq \vartheta S(u) + n \log (m(\vartheta))$
for $n \in \N_0$ and $u \in \Gen_n$.
Then \eqref{eq:C1} becomes \eqref{eq:normalized_V}.
Further, we shall require that the following conditions hold:
\begin{align}
\E[Z_1(\theta)^\gamma] &< \infty\text{ for some } \gamma \in (\alpha,2],		\label{ass:Lgamma}	\\
\E\big[Z_1(\kappa \theta)^2] &< \infty\text{ for some } \kappa \in (\tfrac\alpha2,1).	\label{ass:L2_locally}
\end{align}
We denote by $\gr=\gr(\lambda)$ the smallest closed  subgroup of
the multiplicative group $\C^* = \C \setminus \{0\}$
such that
\begin{equation*}
\Prob\Big(\frac{e^{-\lambda S(u)}}{m(\lambda)}\in \gr \text{ for all } u \in \I\Big)=1.
\end{equation*}
Furthermore, for simplicity of presentation, we assume that
\begin{align}	\label{ass:nonarithmeticity}
\{|z|:z\in \gr \} =\Rp \defeq (0,\infty).
\end{align}
Let us now briefly describe the structure of $\gr$.
If the subgroup $\gr_1=\gr\cap\{|z|=1\}$ coincides with the unit sphere $\{|z|=1\}$, then $\gr$ is the whole multiplicative group $\C^*$.
Otherwise, $\gr_1$ is a finite group and $\gr$ consists of finitely many connected components.
By $\gr_{\R}$ we denote the one-parameter subgroup of $\gr$ which is either $\R_{>}$
if $\gr=\C^*$ or it is the connected component of $\gr$ that contains $1$ if $\gr \neq \C^*$.
Clearly, $\gr_{\R}$ is a subgroup isomorphic to the multiplicative group $\R_{>}$.
By $\gamma_t$ we denote the canonical parametrization of $\gr_\R$ satisfying $|\gamma_t|=t$.
We infer that there exists some $w \in \C$ with $\Real(w)=1$ such that
\begin{equation}	\label{eq:w}
\gamma_t = t^w = \exp(w \log t)		\text{ for all }	t>0.
\end{equation}
Clearly, $w=1$ if $\gr_\R = \R_>$.
It is also worth mentioning
that $\gr_1 \times \R_{>} \simeq \gr$ via the isomorphism $T: \gr_1 \times \R_{>} \to \gr$, $(z,t) \mapsto z \gamma_t = z t^w$.
For illustration purposes, we interrupt the setup and discuss an example.

\begin{example}[Binary splitting and Gaussian increments revisited]	\label{Exa:binary Gaussian 2}
Again we consider a branching random walk with binary splitting and independent standard Gaussian increments,
that is, $\cZ = \delta_{X_1}+\delta_{X_2}$
with independent standard normal random variables $X_1,X_2$.
Recall that, in this situation, we have $m(\lambda) = 2 \exp(\lambda^2/2)$ for $\lambda \in \C$.
The parameter region we are interested in is
$\sqrt{2 \log 2}/2 < \theta < \sqrt{2 \log 2}$ and $\eta=\sqrt{2 \log 2}-\theta$, see Figure \ref{fig:Gaussian Lambda}.
For $\lambda = \theta+\imag\eta$ from this region, we have
\begin{equation*}
m(\lambda) = 2 \exp((\theta+\imag \eta)^2/2) = \exp(\sqrt{2 \log 2} \theta + \imag \theta\eta).
\end{equation*}
Therefore, $\gr$ is generated by the set
\begin{equation*}
\{e^{\theta (x-\sqrt{2 \log 2}) + \imag \eta (x-\theta)}: x \in \R\}
\end{equation*}
which we may rewrite as
\begin{equation*}
\{e^{\imag \eta^2} \cdot e^{(\theta+\imag \eta) x}: x \in \R\}.
\end{equation*}
In particular, \eqref{ass:nonarithmeticity} holds. Moreover, $\gr_1$ is the closed (multiplicative) subgroup of the unit circle
generated by $e^{\imag \eta^2}$.
This group is finite if and only if $\frac1{2\pi}\eta^2\in\Q$, and $\gr_1 = \{z \in \C: |z|=1\}$, otherwise.
As $\theta$ varies over $(\sqrt{2 \log 2}/2,\sqrt{2 \log 2})$, the square of the imaginary part, $\eta^2$, ranges over the whole interval $(0,\frac12 \log 2)$.
Thus, for all but countably many $\theta$, the group $\gr$ equals $\C^*$, but for countably many $\theta$, $\gr$ will consist of a finite family of `snails'
as depicted in the figure below.
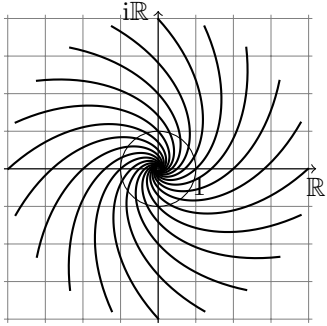
\begin{figure}[h]
\begin{center}
\begin{tikzpicture}[scale=0.5]
		\draw [help lines] (-4.1,-4.1) grid (4.1,4.1);
		\draw (1.1,0) node[below]{$1$};
		\draw [->] (-4.1,0) -- (4.2,0) node[below] {$\R$}; 
		\draw [->] (0,-4.1) -- (0,4.2) node[left] {$\imag\R$}; ;
		\draw (0,0) circle (1);
\draw [thick, domain=-5:2.25, samples=100] plot ({exp((sqrt(2*ln(2)) - sqrt(pi/10))*\x)*cos(deg(pi/10+sqrt(pi/10)*\x))},{exp((sqrt(2*ln(2)) - sqrt(pi/10))*\x)*sin(deg(pi/10+sqrt(pi/10)*\x))});
\draw [thick, domain=-5:2.25, samples=100] plot ({exp((sqrt(2*ln(2)) - sqrt(pi/10))*\x)*cos(deg(2*pi/10+sqrt(pi/10)*\x))},{exp((sqrt(2*ln(2)) - sqrt(pi/10))*\x)*sin(deg(2*pi/10+sqrt(pi/10)*\x))});
\draw [thick, domain=-5:2.25, samples=100] plot ({exp((sqrt(2*ln(2)) - sqrt(pi/10))*\x)*cos(deg(3*pi/10+sqrt(pi/10)*\x))},{exp((sqrt(2*ln(2)) - sqrt(pi/10))*\x)*sin(deg(3*pi/10+sqrt(pi/10)*\x))});
\draw [thick, domain=-5:2.25, samples=100] plot ({exp((sqrt(2*ln(2)) - sqrt(pi/10))*\x)*cos(deg(4*pi/10+sqrt(pi/10)*\x))},{exp((sqrt(2*ln(2)) - sqrt(pi/10))*\x)*sin(deg(4*pi/10+sqrt(pi/10)*\x))});
\draw [thick, domain=-5:2.25, samples=100] plot ({exp((sqrt(2*ln(2)) - sqrt(pi/10))*\x)*cos(deg(5*pi/10+sqrt(pi/10)*\x))},{exp((sqrt(2*ln(2)) - sqrt(pi/10))*\x)*sin(deg(5*pi/10+sqrt(pi/10)*\x))});
\draw [thick, domain=-5:2.25, samples=100] plot ({exp((sqrt(2*ln(2)) - sqrt(pi/10))*\x)*cos(deg(6*pi/10+sqrt(pi/10)*\x))},{exp((sqrt(2*ln(2)) - sqrt(pi/10))*\x)*sin(deg(6*pi/10+sqrt(pi/10)*\x))});
\draw [thick, domain=-5:2.25, samples=100] plot ({exp((sqrt(2*ln(2)) - sqrt(pi/10))*\x)*cos(deg(7*pi/10+sqrt(pi/10)*\x))},{exp((sqrt(2*ln(2)) - sqrt(pi/10))*\x)*sin(deg(7*pi/10+sqrt(pi/10)*\x))});
\draw [thick, domain=-5:2.25, samples=100] plot ({exp((sqrt(2*ln(2)) - sqrt(pi/10))*\x)*cos(deg(8*pi/10+sqrt(pi/10)*\x))},{exp((sqrt(2*ln(2)) - sqrt(pi/10))*\x)*sin(deg(8*pi/10+sqrt(pi/10)*\x))});
\draw [thick, domain=-5:2.25, samples=100] plot ({exp((sqrt(2*ln(2)) - sqrt(pi/10))*\x)*cos(deg(9*pi/10+sqrt(pi/10)*\x))},{exp((sqrt(2*ln(2)) - sqrt(pi/10))*\x)*sin(deg(9*pi/10+sqrt(pi/10)*\x))});
\draw [thick, domain=-5:2.25, samples=100] plot ({exp((sqrt(2*ln(2)) - sqrt(pi/10))*\x)*cos(deg(10*pi/10+sqrt(pi/10)*\x))},{exp((sqrt(2*ln(2)) - sqrt(pi/10))*\x)*sin(deg(10*pi/10+sqrt(pi/10)*\x))});
\draw [thick, domain=-5:2.25, samples=100] plot ({exp((sqrt(2*ln(2)) - sqrt(pi/10))*\x)*cos(deg(11*pi/10+sqrt(pi/10)*\x))},{exp((sqrt(2*ln(2)) - sqrt(pi/10))*\x)*sin(deg(11*pi/10+sqrt(pi/10)*\x))});
\draw [thick, domain=-5:2.25, samples=100] plot ({exp((sqrt(2*ln(2)) - sqrt(pi/10))*\x)*cos(deg(12*pi/10+sqrt(pi/10)*\x))},{exp((sqrt(2*ln(2)) - sqrt(pi/10))*\x)*sin(deg(12*pi/10+sqrt(pi/10)*\x))});
\draw [thick, domain=-5:2.25, samples=100] plot ({exp((sqrt(2*ln(2)) - sqrt(pi/10))*\x)*cos(deg(13*pi/10+sqrt(pi/10)*\x))},{exp((sqrt(2*ln(2)) - sqrt(pi/10))*\x)*sin(deg(13*pi/10+sqrt(pi/10)*\x))});
\draw [thick, domain=-5:2.25, samples=100] plot ({exp((sqrt(2*ln(2)) - sqrt(pi/10))*\x)*cos(deg(14*pi/10+sqrt(pi/10)*\x))},{exp((sqrt(2*ln(2)) - sqrt(pi/10))*\x)*sin(deg(14*pi/10+sqrt(pi/10)*\x))});
\draw [thick, domain=-5:2.25, samples=100] plot ({exp((sqrt(2*ln(2)) - sqrt(pi/10))*\x)*cos(deg(15*pi/10+sqrt(pi/10)*\x))},{exp((sqrt(2*ln(2)) - sqrt(pi/10))*\x)*sin(deg(15*pi/10+sqrt(pi/10)*\x))});
\draw [thick, domain=-5:2.25, samples=100] plot ({exp((sqrt(2*ln(2)) - sqrt(pi/10))*\x)*cos(deg(16*pi/10+sqrt(pi/10)*\x))},{exp((sqrt(2*ln(2)) - sqrt(pi/10))*\x)*sin(deg(16*pi/10+sqrt(pi/10)*\x))});
\draw [thick, domain=-5:2.25, samples=100] plot ({exp((sqrt(2*ln(2)) - sqrt(pi/10))*\x)*cos(deg(17*pi/10+sqrt(pi/10)*\x))},{exp((sqrt(2*ln(2)) - sqrt(pi/10))*\x)*sin(deg(17*pi/10+sqrt(pi/10)*\x))});
\draw [thick, domain=-5:2.25, samples=100] plot ({exp((sqrt(2*ln(2)) - sqrt(pi/10))*\x)*cos(deg(18*pi/10+sqrt(pi/10)*\x))},{exp((sqrt(2*ln(2)) - sqrt(pi/10))*\x)*sin(deg(18*pi/10+sqrt(pi/10)*\x))});
\draw [thick, domain=-5:2.25, samples=100] plot ({exp((sqrt(2*ln(2)) - sqrt(pi/10))*\x)*cos(deg(19*pi/10+sqrt(pi/10)*\x))},{exp((sqrt(2*ln(2)) - sqrt(pi/10))*\x)*sin(deg(19*pi/10+sqrt(pi/10)*\x))});
\draw [thick, domain=-5:2.25, samples=100] plot ({exp((sqrt(2*ln(2)) - sqrt(pi/10))*\x)*cos(deg(20*pi/10+sqrt(pi/10)*\x))},{exp((sqrt(2*ln(2)) - sqrt(pi/10))*\x)*sin(deg(20*pi/10+sqrt(pi/10)*\x))});
		\end{tikzpicture}
\caption{The group $\gr$ in the case $\theta=\sqrt{2 \log 2} - \sqrt{\frac\pi{10}}$ and $\eta = \sqrt{\frac\pi{10}}$.}
\label{fig:snails}
\end{center}
\end{figure}
Finally, whenever $\gr = \C^*$, the scaling exponent can be chosen as $w=1$.
When $\gr \not = \C^*$, then $\gr_1$ is finite and the connected component of $\gr_1$ containing $1$ is
\begin{equation*}
\{e^{(\theta+\imag \eta) x}: x \in \R\} = \{e^{\lambda x}: x \in \R\} = \{t^{\lambda/\theta}: t>0\}
\end{equation*}
so that $w=\lambda/\theta$ in this case.
\end{example}

By $\varrho$ we denote the Haar measure on $\gr$ satisfying the normalization condition
\begin{equation}	\label{eq:Haar normalization}
\varrho(\{z \in \C: 1 \leq |z| < e\}) = 1,
\end{equation}
i.e., $\varrho$ is the image of the measure $\varrho_1\times\frac{\dt}t$, where $\varrho_1$ is the uniform distribution on $\gr_1$, via the isomorphism $T$.

To understand the fluctuations of $Z(\lambda)-Z_n(\lambda)$, one needs to know the tail behavior of $Z(\lambda)$.
The following theorem, which is interesting in its own right, provides the information required.
For its formulation, we introduce some {additional} notation.
We write $\Chat = \C \cup \{\infty\}$ for the one-point (Alexandroff) compactification of $\C$.
Further, we denote by $C_{\mathrm{c}}^{2}(\Chat \setminus \{0\})$
the set of real-valued, twice continuously partially differentiable functions on $\Chat \setminus \{0\}$ with compact support.
Finally, we remind the reader that a measure $\nu$ on $\C$ is called a \emph{L\'evy measure}
if $\nu(\{0\})=0$ and $\int_\C (|z|^2 \wedge 1) \, \nu(\dz) < \infty$.
A L\'evy measure $\nu$ is called $(\gr,\alpha)$-invariant
if $\nu(uB) = |u|^{-\alpha} \nu(B)$ for all $u \in \gr$ and all Borel sets $B \subseteq \C \setminus \{0\}$.
The L\'evy measure $\nu$ is called non-zero if $\nu(B)>0$ for some Borel set $B$ as above.

\begin{theorem}	\label{Thm:tail Z(lambda) in Lambda^(1,2)}
Let $\lambda \in \Dom$ satisfy \eqref{eq:C1} with $\alpha \in (1,2)$.
Further, suppose that \eqref{ass:Lgamma}, \eqref{ass:L2_locally} and \eqref{ass:nonarithmeticity} hold.
Then there is a non-zero
$(\gr,\alpha)$-invariant L\'evy measure $\nu$ on $\C$
such that
\begin{equation*}
\lim_{\substack{|z|\to 0, \\ z\in \gr}}|z|^{-\alpha}\E[\phi(zZ(\lambda))]=	{\textstyle \int \phi \, \dnu}
\end{equation*}
for all $\phi \in C_{\mathrm{c}}^{2}(\Chat \setminus \{0\})$.
\end{theorem}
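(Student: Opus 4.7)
The plan is to apply implicit renewal theory on the multiplicative group $\gr$, in the spirit of Goldie's classical argument and its branching extensions (cf.\ \cite{Kolesko+Meiners:2017}). The starting point is the distributional fixed-point equation
\[
Z(\lambda) \eqdist \sum_{|u|=1} T_u Z^{(u)}(\lambda),	\qquad T_u \defeq \frac{e^{-\lambda S(u)}}{m(\lambda)},
\]
with $(Z^{(u)}(\lambda))_{|u|=1}$ i.i.d.\ copies of $Z(\lambda)$ independent of $\cZ$. A direct computation using $|T_u|^\alpha = e^{-\alpha\theta S(u)}/|m(\lambda)|^\alpha$ and $\log|T_u|=-\theta S(u)-\log|m(\lambda)|$ shows that \eqref{eq:C1} is equivalent to
\[
\E\Big[\sum_{|u|=1} |T_u|^\alpha\Big] = 1	\quad\text{and}\quad	\E\Big[\sum_{|u|=1} |T_u|^\alpha \log|T_u|\Big] = 0,
\]
so the $\alpha$-size-biased multiplicative ``spine walk'' on $\gr$ has zero drift in the logarithmic direction, placing us in the delicate critical regime of the implicit renewal problem.

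For $\phi \in C_{\mathrm{c}}^{2}(\Chat \setminus \{0\})$ I set $\Psi(z) \defeq \E[\phi(zZ(\lambda))]$ and $f(z) \defeq |z|^{-\alpha}\Psi(z)$ for $z \in \gr$. A second-order Taylor expansion of $\phi$ combined with the smoothing equation yields a Goldie-type identity
\[
f(z) = (Kf)(z) + g_\phi(z),	\qquad	(Kf)(z) \defeq \E\Big[\sum_{|u|=1}|T_u|^\alpha f(zT_u)\Big],
\]
where $K$ is the Markov transition kernel of the $\alpha$-tilted spine walk on $\gr$ and $g_\phi$ collects the non-linear cross-term remainders. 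The tail bound from Proposition \ref{Prop:heavy tail bound Z(lambda)} (applicable thanks to \eqref{ass:Lgamma}) controls $|Z(\lambda)|$ inside those remainders, while \eqref{ass:L2_locally} supplies the second moment that makes $g_\phi$ directly Riemann integrable on $\gr$. Iterating produces the renewal representation $f(z)=\sum_{n=0}^{\infty}(K^n g_\phi)(z)$. Under the isomorphism $\gr \simeq \gr_1 \times \R_{>}$ the spine walk projects to a real-valued random walk; the zero drift and finite variance together with the non-arithmeticity assumption \eqref{ass:nonarithmeticity} place us in the null-recurrent regime, where a Stone/Kesten-type Markov renewal theorem yields
\[
f(z) \;\longrightarrow\; \int_{\gr} g_\phi \, d\varrho \;\eqdef\; \int \phi \, \dnu	\quad \text{as } |z|\to 0,\ z\in\gr,
\]
with $\varrho$ the Haar measure of \eqref{eq:Haar normalization}.

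The $(\gr,\alpha)$-invariance $\nu(uB)=|u|^{-\alpha}\nu(B)$ is inherited from the $|\cdot|^{-\alpha}$-homogeneity built into $f$ together with the left-invariance of $\varrho$; the L\'evy property $\int(|z|^2\wedge 1)\,\dnu(z)<\infty$ follows by exploiting $(\gr,\alpha)$-invariance to collapse the integral to a finite one over the fundamental domain $\{1\leq |z|<e\}$ and then invoking the $L^2$-bound from \eqref{ass:L2_locally}. Non-triviality of $\nu$ is obtained by selecting $\phi\geq 0$ supported on a large annulus and using the fact that in this critical regime $\E[|Z(\lambda)|^\alpha]=\infty$ (which follows from the criticality identity $\E[\sum_{|u|=1}|T_u|^\alpha]=1$ applied to the smoothing equation), so that $|Z(\lambda)|$ must have a genuine $t^{-\alpha}$ tail and the limit cannot vanish. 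The principal obstacle is precisely the \emph{critical}, zero-drift character of the tilted spine walk: classical positive-drift Blackwell/Goldie renewal theorems no longer apply, and the required null-recurrent Markov renewal theorem demands tight control over the direct Riemann integrability of $g_\phi$ on $\gr$ as well as over the interplay between the compact fiber $\gr_1$ and the $\R_{>}$-component parametrized by $w$ in \eqref{eq:w}. This is exactly where the combined strength of \eqref{ass:Lgamma}, \eqref{ass:L2_locally} and \eqref{ass:nonarithmeticity} becomes essential.
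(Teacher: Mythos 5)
Your overall strategy---recasting the problem as an implicit renewal equation $f = Kf + g_\phi$ for $f(z)=|z|^{-\alpha}\E[\phi(zZ(\lambda))]$ on the group $\gr$, with $K$ the kernel of the $\alpha$-tilted spine walk, which has zero logarithmic drift by \eqref{eq:C1}---is exactly the paper's. But your identification of the limit is wrong, and this is not a cosmetic slip: it is the crux of the critical case. In the null-recurrent regime the potential $\sum_{n\geq 0} K^n g_\phi$ of a charge $g_\phi$ does \emph{not} converge to $\int_{\gr} g_\phi\,d\varrho$; that series can only be finite in the first place because $\int_{\gr} g_\phi\,d\varrho = 0$ (the paper proves this cancellation as \eqref{eq:kappa(alpha)=0}, via a contradiction argument with the renewal theorem on $\gr$), so the limit you wrote down is identically zero. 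The correct limit is the second-order functional $-\frac{2}{\sigma^2}\int g_\phi(z)\log|z|\,\varrho(\dz)$, obtained in the paper's Proposition \ref{Prop:renewal asymptotic} by a duality/ladder-epoch decomposition and the Wiener--Hopf identity $\E[\log|R_\tau|]\cdot\E[\log|R_{T_1}|]=-\sigma^2/2$. Moreover, even the \emph{existence} of the limit needs an argument you have not supplied: the paper first extracts a vaguely convergent subsequence of the measures $|z|^{-\alpha}\Prob(zZ\in\cdot)$ (using Proposition \ref{Prop:heavy tail bound Z(lambda)}) and then uses the renewal result to show the subsequential limit is unique. You would need to verify the hypotheses of such a result, in particular the bound $|g_\phi(z)|\leq c(|z|^\delta\wedge|z|^{-\delta})$, which is where Proposition \ref{Prop:Buraczewski+Damek+Mentemeier:2013 Lemma 6.2} and \eqref{ass:L2_locally} actually enter.

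The second genuine gap is non-triviality of $\nu$. Your argument---that $\E[|Z(\lambda)|^\alpha]=\infty$ forces a genuine $t^{-\alpha}$ tail---does not work: a random variable with $\Prob(|Z|>t)\sim (t^{\alpha}\log t)^{-1}$ has infinite $\alpha$-moment yet satisfies $t^{\alpha}\Prob(|Z|>t)\to 0$, so $\nu$ could still vanish. Nor does $\E[|Z(\lambda)|^\alpha]=\infty$ follow ``from the criticality identity'' without further work. The paper needs a substantial separate argument (Theorem \ref{Thm:positivity of nu}): assuming $\nu=0$, both $\kappa(\alpha)$ and $\kappa'(\alpha)$ vanish, so the Mellin-type transform $F(s)=\E[|Z|^s]$ extends holomorphically past $\alpha$, whence $\E[|Z|^{\alpha+\delta}]<\infty$ by Landau's theorem; via Burkholder's inequality this bounds the quadratic variation of the martingale representation of $Z-1$, which contradicts the lower bound $\Prob(\max_{u}|L(u)|>t)>ct^{-\alpha}$ from \cite{Madaule:2016} combined with the strong Markov branching property along the optional line $\mathcal{N}_t$. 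Without some version of this (or another quantitative lower bound on the tail of $Z(\lambda)$), the non-degeneracy claim in the theorem is unproven.
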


We denote by $(X_t)_{t \geq 0}$ a complex-valued L\'evy process
which is independent of $\F_\infty$ and has characteristic exponent
\begin{equation*}
\Psi(x) = \int \big(e^{\imag \langle x,z\rangle}-1-\imag  \langle x,z\rangle\big) \, \nu(\dz),	\quad	x \in \C.
\end{equation*}
Notice that $\Psi$ is well-defined as integration by parts gives
\begin{align*}
\int_{\{|z| \geq 1\}}(|z|-1) \, \nu(\dz) = \int_1^\infty \nu(\{|z| \geq t\}) \, \dt = \nu(\{|z| \geq 1\}) \, \int_1^\infty \! t^{-\alpha} \, \dt < \infty.
\end{align*}
Therefore, $(X_t)_{t \geq 0}$ is the L\'evy process associated with the L\'evy-Khintchine characteristics
$(0,-\int_{\{|z|>1\}} z \, \nu(\dz),\nu)$ (cf.\ \cite[p.\;291, Corollary 15.8]{Kallenberg:2002}).

Now we are ready to describe the fluctuations of $Z(\lambda)-Z_n(\lambda)$ for $\lambda \in \partial \Lambda^{(1,2)}$.
\begin{theorem}	\label{Thm:fluctuations on partial Lambda^(1,2)}
Suppose that the assumptions of Theorem \ref{Thm:tail Z(lambda) in Lambda^(1,2)} hold.
Then there exists $w\in\C$ such that $\Real(w)=1$ (see \eqref{eq:w} for the definition of $w$) and
\begin{equation}	\label{eq:Lambda^(1,2) weakly in probability}
\Law\big(n^{\frac{w}{2\alpha}}(Z(\lambda)-Z_n(\lambda)) \mid \mathcal F_n\big) \weaklyto \Law(X_{c D_{\infty}}|\mathcal F_{\infty})	\quad \text{in $\Prob$-probability}
\end{equation}
for $c \defeq \sqrt{\frac2{\pi \sigma^2}}$, $\sigma^2$ defined by
\eqref{eq:sigma^2<infty Aidekon+Shi} with $V(u)$ as in \eqref{eq:V(u)}
and $D_\infty$ being the a.\,s.\ limit of the derivative martingale defined in \eqref{eq:derivative martingale}.
\end{theorem}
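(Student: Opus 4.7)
The plan is to exploit the branching decomposition
\begin{equation*}
Z(\lambda)-Z_n(\lambda) = \sum_{|u|=n} \frac{e^{-\lambda S(u)}}{m(\lambda)^n}\,\big(Z^{(u)}(\lambda)-1\big),
\end{equation*}
where $(Z^{(u)}(\lambda))_{|u|=n}$ are i.i.d.\ copies of $Z(\lambda)$ independent of $\F_n$, and to apply, conditionally on $\F_n$, the classical triangular-array limit theorem for sums of independent complex random variables converging to an infinitely divisible law. Set $c_u \defeq n^{w/(2\alpha)} e^{-\lambda S(u)}/m(\lambda)^n$. Using $|m(\lambda)|^\alpha = m(\vartheta)$, the definition \eqref{eq:V(u)} of $V(u)$, and $|n^{w/(2\alpha)}|^\alpha = n^{1/2}$ (since $\Real(w)=1$), one gets $c_u \in \gr$ and $|c_u|^\alpha = \sqrt{n}\,e^{-V(u)}$, so that the A\"{\i}d\'ekon--Shi relation \eqref{eq:Aidekon+Shi} yields
\begin{equation*}
\sum_{|u|=n}|c_u|^\alpha \;=\; \sqrt{n}\,W_n \;\Probto\; cD_\infty,
\end{equation*}
and this will be the random ``time'' at which $X$ is evaluated in the limit.

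The core of the argument is then to verify, in $\Prob$-probability and conditionally on $\F_n$, the three standard conditions for convergence to an infinitely divisible law with L\'evy triplet $(0,-\int_{\{|z|>1\}}z\,\dnu_{cD_\infty},\nu_{cD_\infty})$, where $\nu_{cD_\infty} \defeq cD_\infty\cdot\nu$. \emph{Negligibility}: the tightness result $\min_{|u|=n} V(u) = \tfrac32 \log n + O_\Prob(1)$ (cf.\ \cite{Shi:2015}) gives $\max_{|u|=n}|c_u|=O_\Prob(n^{-1/\alpha})\to 0$. \emph{L\'evy intensity}: for $\phi \in C_{\mathrm{c}}^{2}(\Chat\setminus\{0\})$ one applies Theorem \ref{Thm:tail Z(lambda) in Lambda^(1,2)} to the centred shift $Z(\lambda)-1$, whose tail behaviour at $0$ coincides with that of $Z(\lambda)$, to obtain
\begin{equation*}
\E\big[\phi\big(c_u(Z^{(u)}(\lambda)-1)\big)\,\big|\,\F_n\big]
\;=\; |c_u|^\alpha \textstyle\int \phi\,\dnu\,(1+o(1))\qquad \text{as } |c_u|\to 0;
\end{equation*}
the $(\gr,\alpha)$-invariance of $\nu$ is essential to collapse the directional dependence of $c_u$ into the scalar factor $|c_u|^\alpha$. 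Summing, the conditional intensities converge to $cD_\infty\int\phi\,\dnu$, the L\'evy intensity of $X_{cD_\infty}$. \emph{No Gaussian part and correct centering}: a truncated second-moment estimate based on \eqref{ass:L2_locally} and Proposition \ref{Prop:heavy tail bound Z(lambda)} rules out a Brownian component, while the identity $\E[Z(\lambda)-1]=0$ matches the L\'evy--Khintchine drift $-\int_{\{|z|>1\}} z\,\dnu$ built into $\Psi$.

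Putting these ingredients together via a standard triangular-array theorem (cf.\ \cite[Ch.~15]{Kallenberg:2002}) delivers, for every $x\in\C$, the in-probability convergence of conditional characteristic functions
\begin{equation*}
\E\big[e^{\imag \langle x,\,n^{w/(2\alpha)}(Z(\lambda)-Z_n(\lambda))\rangle}\,\big|\,\F_n\big]
\;\Probto\; \exp\big(cD_\infty\,\Psi(x)\big),
\end{equation*}
which is equivalent to \eqref{eq:Lambda^(1,2) weakly in probability}. The main obstacle is making the tail expansion of Theorem \ref{Thm:tail Z(lambda) in Lambda^(1,2)} sufficiently uniform in $|c_u|$ to be summed particle by particle; the idea is to split the sum at a level $|c_u|\le\delta$ and $|c_u|>\delta$. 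In the first regime the expansion applies with a remainder controlled through the $L^2$-bound \eqref{ass:L2_locally}; in the second regime the polynomial tail bound \eqref{eq:heavy tail bound Z(lambda)} combined with Madaule's point-process convergence (Proposition \ref{Prop:Madaule's theorem}) guarantees that only finitely many particles contribute, and their influence can be absorbed. Letting first $n\to\infty$ and then $\delta\to 0$ reconciles the two regimes and closes the argument.
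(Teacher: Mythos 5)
Your proposal follows essentially the same route as the paper: the branching decomposition \eqref{eq:decomposition nth gen}, the identity $\sum_{|u|=n}|c_u|^\alpha=\sqrt{n}\,W_n\Probto cD_\infty$, and the Kallenberg triangular-array criteria (null array, vague convergence of conditional intensities to $cD_\infty\nu$, truncated means and covariances) verified via the tail asymptotics of Theorem \ref{Thm:tail Z(lambda) in Lambda^(1,2)} applied to $Z(\lambda)-1$ — the paper packages the truncated mean/covariance conditions in Corollary \ref{Cor:Lambda^(1,2) expectation and covariance} and handles the in-probability convergences by first extracting a.s.-convergent subsequences. The only deviation is your final paragraph: the $\delta$-splitting and the appeal to Madaule's point-process convergence are unnecessary, since once the array is null the limit in Theorem \ref{Thm:tail Z(lambda) in Lambda^(1,2)} applies uniformly to all weights $c_u$ at once (for $\sup_u|c_u|$ small enough the relative error is at most $\varepsilon$ times $\sum_u|c_u|^\alpha$), which is exactly how the paper sums particle by particle.
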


\subsubsection*{Related literature.}

The martingale convergence theorem guarantees the almost sure convergence of $Z_n(\theta)$,
but its limit $Z(\theta)$ may vanish a.\,s.
Equivalent conditions for $\Prob(Z(\theta)=0) < 1$ can be found in \cite[Lemma 5]{Biggins:1977}, \cite{Lyons:1997} and \cite[Theorem 1.3]{Alsmeyer+Iksanov:2009}.
Convergence in distribution of $a_n (Z(\lambda)-Z_n(\lambda))$ as $n \to \infty$ for constants $a_n > 0$
can be viewed as a result on the rate of convergence.
In \cite{Alsmeyer+al:2009,Iksanov+Kabluchko:2016,Iksanov+al:2018,Iksanov+Meiners:2010,Iksanov:2006}
the rate of convergence of $Z_n(\theta)$ to $Z(\theta)$ has been investigated
in the regime $\Prob(Z(\theta)=0) < 1$.
The papers \cite{Alsmeyer+al:2009,Iksanov+Meiners:2010,Iksanov:2006}
deal with the issue of convergence of the infinite series
\begin{equation}	\textstyle	\label{eq:infinite series}
\sum_{n=0}^\infty a_n (Z(\theta)-Z_n(\theta)).
\end{equation}
More precisely, in \cite{Alsmeyer+al:2009}, necessary conditions
and sufficient conditions for the convergence in $L^p$ of
the infinite series in \eqref{eq:infinite series} are given in the situation where $a_n = e^{an}$ for some $a > 0$.
Sufficient conditions for the almost sure convergence of the series in \eqref{eq:infinite series}
have been provided in the case where $a_n = e^{an}$ for some $a > 0$ in \cite{Iksanov+Meiners:2010}
and in the case where $(a_n)_{n \in \N_0}$ is regularly varying at $+\infty$ in \cite{Iksanov:2006}.

The papers \cite{Iksanov+Kabluchko:2016,Iksanov+al:2018} are in the spirit of the article at hand.
In these works,
for $\alpha \in (1,2]$, it is shown that if $\kappa \defeq m(\alpha \theta) / m(\theta)^\alpha < 1$,
then $\kappa^{-n/\alpha} (Z(\theta)-Z_n(\theta))$ converges in distribution
to a random variable $Z(\alpha \theta)^{1/\alpha} U$
where $U$ is a (non-degenerate) centered $\alpha$--stable random variable (normal, if $\alpha=2$)
independent of $Z(\alpha \theta)$. Specifically,
the case where $\alpha \in (1,2)$ and $\Prob(Z_1(\theta) > x) \sim c x^{-\alpha}$ for some $c > 0$ is covered in \cite[Corollary 1.3]{Iksanov+al:2018}
whereas the case $\alpha=2$ and $\E[Z_1(\theta)^2]<\infty$ is investigated in \cite[Corollary 1.1]{Iksanov+Kabluchko:2016}.
Both papers actually contain functional versions of these convergences.
The aforementioned assertions are extensions of the corresponding results for Galton-Watson processes \cite{Buehler:1969,Heyde:1970,Heyde+Brown:1971}.

The counterpart of our Theorem \ref{Thm:Cor 1.1 in Iksanov+Kabluchko:2016},
which gives the fluctuations of Biggins' martingales for small parameters
has a natural analogue in the complex branching Brownian motion energy model.
The corresponding statement in the latter model is \cite[Theorem 1.4]{Hartung+Klimovsky:2017}.

It is well known that if $\theta m'(\theta)/m(\theta) = \log(m(\theta))$, then $Z_n(\theta)$ converges to $0$ a.\,s.
In this case a natural object to study is the derivative martingale $(D_n)_{n \in \N_0}$.
In order to study the fluctuations of $D_n$ around its limit $D_{\infty}$ one needs an additional correction term of order $(\log n)\slash\sqrt n$.
The corresponding result, again in the context of branching Brownian motion,
is given in \cite{Maillard+Pain:2018}, where it is shown that $\sqrt n(D_{\infty}-D_n+\frac{\log n}{\sqrt{2\pi n}}D_{\infty})\distto S_{D_{\infty}}$
for an independent 1--stable L\'evy process $(S_t)_{t \geq 0}$.

The martingale limits $Z(\lambda)$ solve smoothing equations, namely,
\begin{equation}	\label{eq:smoothing eq for Z(lambda)}
Z(\lambda) = \sum_{|u|=1} \frac{e^{-\lambda S(u)}}{m(\lambda)} [Z(\lambda)]_u	\quad	\text{a.\,s.}
\end{equation}
where the $[Z(\lambda)]_u$, $u \in \N$ are independent copies of $Z(\lambda)$ which are independent of the positions $S(u)$, $|u|=1$.
If $U$ is centered $\alpha$--stable and independent of $Z(\alpha \theta)$,
then the limit variable $Z(\alpha \theta)^{1/\alpha} U$ in \cite[Corollary 1.1]{Iksanov+Kabluchko:2016} and \cite[Corollary 1.3]{Iksanov+al:2018}
satisfies
\begin{equation*}
Z(\alpha \theta)^{1/\alpha} \, U = \bigg(\sum_{|v|=1} \frac{e^{-\alpha \theta S(v)}}{m(\alpha \theta)} [Z(\lambda)]_v\bigg)^{\!\!\frac{1}{\alpha}} \, U
\eqdist \sum_{|v|=1} \frac{e^{-\theta S(v)}}{m(\alpha \theta)^{1/\alpha}} [Z(\lambda)]_v^{1/\alpha} \, U_v
\end{equation*}
where $(U_v)_{v \in \N}$ is a family of independent copies of $U$
which is independent of all other random variables appearing on the right-hand side of the latter distributional equality.
Hence, the distribution
of $Z(\alpha \theta)^{1/\alpha} \, U$ is a solution to the following fixed-point equation of the smoothing transformation:
\begin{equation}	\label{eq:smoothing eq for Z(lambda)^1/alpha U}
X	\eqdist	\sum_{j \geq 1} T_j X_j
\end{equation}
where $T_j \defeq \1_{\{j \in \Gen_1\}} \frac{e^{-\theta S(j)}}{m(\alpha \theta)^{1/\alpha}}$ and the $X_j$, $j \in \N$ are independent
copies of the random variable $X$.
In \eqref{eq:smoothing eq for Z(lambda)^1/alpha U}, which should be seen as an equation for the distribution
of $X$ rather than the random variable $X$ itself,
$T_1,T_2,\ldots$ are considered given whereas the distribution
of $X$ is considered unknown.
Equation \eqref{eq:smoothing eq for Z(lambda)^1/alpha U} has been studied in depth in the case where the $T_j$ and $X_j$ are nonnegative,
see \cite{Alsmeyer+al:2012} for the most recent contribution and an overview of earlier results.
If, however, we consider complex $Z(\lambda)$ at complex parameters, \eqref{eq:smoothing eq for Z(lambda)} becomes an equation between complex random variables
and it is reasonable to conjecture that the limiting distributions
of $a_n (Z(\lambda)-Z_n(\lambda))$ are solutions to \eqref{eq:smoothing eq for Z(lambda)^1/alpha U}
with complex-valued $T_j$ and $X_j$.
A systematic study of \eqref{eq:smoothing eq for Z(lambda)^1/alpha U} in the case where $T_j$ and $X_j$ are complex-valued
has been addressed only recently in \cite{Meiners+Mentemeier:2017}.

\section{Preliminaries}	\label{sec:preliminaries}

In this section, we fix some notation and set the stage for the proofs of our main results.

\subsection{Notation}

\subsubsection*{Complex numbers.}
Throughout the paper, we identify $\C$ and $\R^2$.
For instance, for $z \in \C$, we sometimes write $z_1$ for $\Real(z)$ and $z_2$ for $\Imag(z)$.
Further, we sometimes identify $z \in \C$ with the column vector $(z_1,z_2)^\transp$
and write $z^\transp$ for the row vector $(z_1,z_2)$.
As usual, we write $\overline{z}$ for the complex conjugate of $z \in \C$, i.e., $\overline{z} = z_1 - \imag z_2$.
In some proofs, we identify a complex number $z = r e^{\imag \varphi}$
with the matrix $r R(\varphi)$ where $R(\varphi)$ is the $2 \times 2$ rotation matrix
\begin{equation*}
R(\varphi) =
\begin{pmatrix}
\cos \varphi	&	-\sin \varphi	\\
\sin \varphi 	&	\cos \varphi		
\end{pmatrix}.
\end{equation*}
By $\Chat$ we denote the one-point compactification of $\C$, i.e., $\Chat = \C \cup \{\infty\}$
and a set $K \subseteq \Chat$ is relatively compact if it is relatively compact in $\C$ or the complement of a bounded subset of $\C$.
A function $\phi:\Chat \to \R$ is differentiable at $\infty$ if $\psi:\C \to \R$ with $\psi(z) = \phi(1/z)$ for $z \not = 0$ and $\psi(0) = \phi(\infty)$
is differentiable at $0$.

\subsubsection*{Conditional expectations.}
Throughout the paper, we write
$\Prob_n(\cdot)$ for $\Prob(\cdot | \F_n)$ for every $n \in \N_0$.
The corresponding (conditional) expectation and variance are denoted $\E_n[\cdot] \defeq \E[\cdot | \F_n]$
and $\Var_n[\cdot] \defeq \Var[\cdot | \F_n]$.
We further write $\E[X; A]$ for $\E[X\1_{A}]$, $\Var[X; A]$ for $\Var[X\1_{A}]$,
and $\Cov[X; A]$ for the covariance matrix of the vector $X\1_{A}$.
If $X$ is a complex random variable, we write $\Cov[X]$ for the covariance matrix of the vector $(\Real(X), \Imag(X))^\transp$.
We also use the analogous notation with $\E$, $\Var$ and $\Cov$ replaced by $\E_n$, $\Var_n$ and $\Cov_n$.

\subsubsection*{The martingale.}
Further, when $\lambda \in \Lambda$ is fixed, we sometimes write $Z_n$ for $Z_n(\lambda)$ and $Z$ for $Z(\lambda)$
in order to unburden the notation.

\subsection{Background and relevant results from the literature}	\label{subsec:background}

\subsubsection*{Recursive decomposition of tail martingales.}
Throughout the paper, we denote by $[\cdot]_u$, $u \in \I$ the canonical shift operators,
that is, for any function $\Psi$ of $(\cZ(v))_{v \in \I}$,
we write $[\Psi]_u$ for the same function applied to the family $(\cZ(uv))_{v \in \I}$.
Using this notation, we obtain the following decomposition of $Z(\lambda)-Z_n(\lambda)$:
\begin{equation}	\label{eq:decomposition nth gen}
Z(\lambda)-Z_n(\lambda) = m(\lambda)^{-n} \sum_{|u|=n} e^{-\lambda S(u)} ([Z(\lambda)]_u-1)	\quad	\text{a.\,s.,}
\end{equation}
which is valid for every $n \in \N_0$. Therefore, with respect to $\Prob_n$,
$Z(\lambda)-Z_n(\lambda)$ is a sum of i.i.d.\ centered random variables.
This explains the appearance of (randomly scaled) normal or stable distributions in our main theorems.

\subsubsection*{Minimal position: First order.}
If $\theta > 0$ with $m(\theta)<\infty$, then \cite[Theorem 3]{Biggins:1998} gives
\begin{equation}	\label{eq:Biggins:1998}
\sup_{|u|=n} {\textstyle \frac{e^{-\theta S(u)}}{m(\theta)^{n}}} \to 0		\quad	\text{a.\,s.\ as } n \to \infty.
\end{equation}

\section{The Gaussian regime}	\label{sec:Gaussian regime}

Before we prove Theorems \ref{Thm:Cor 1.1 in Iksanov+Kabluchko:2016} and \ref{Thm:Gaussian boundary case},
we recall some basic facts about complex random variables.

\subsubsection*{Covariance calculations.}
The proofs of Theorems \ref{Thm:Cor 1.1 in Iksanov+Kabluchko:2016} and \ref{Thm:Gaussian boundary case}
are based on covariance calculations for complex random variables.
We remind the reader of some simple but useful facts in this context.
If $\zeta = \xi+\imag \tau$ is a complex random variable with $\xi = \Real(\zeta)$ and $\tau = \Imag(\zeta)$,
then a simple calculation shows that the covariance matrix of $\zeta$ can be represented as
\begin{equation}	\label{eq:complex covariance}
\Cov[\zeta]
= \begin{pmatrix}
\E[\xi^2]	&	\E[\xi\tau]	\\
\E[\xi\tau]	&	\E[\tau^2]
\end{pmatrix}
= \frac12 \begin{pmatrix}
\Real(\E[|\zeta|^2]+\E[\zeta^2])	&	\Imag(\E[|\zeta|^2]+\E[\zeta^2])	\\
\Imag(\E[|\zeta|^2]+\E[\zeta^2])	&	\Real(\E[|\zeta|^2]-\E[\zeta^2])
\end{pmatrix}.
\end{equation}
Thus covariance calculations can be reduced to second moment calculations.

\subsubsection*{Proof of Theorems \ref{Thm:Cor 1.1 in Iksanov+Kabluchko:2016} and \ref{Thm:Gaussian boundary case}.}
Throughout the paragraph,
for $n \in \N_0$ and $u \in \Gen_n$, we set $Y_u \defeq e^{-\lambda S(u)}/m^{n/2}$.
We start with a lemma.

\begin{lemma}	\label{Lem:L^2 norm of Z-1}
Suppose that $\lambda \in \Dom$ with $m(\lambda) \not = 0$ is such that $\sigma_\theta^2 < \infty$,
$\sigma_\lambda^2 > 0$ and $m(2\theta) < |m(\lambda)|^2$.
then
\begin{align}	\label{eq:L^2 norm of Z-1}
\E[|Z(\lambda)\!-\!1|^2] = \frac{\E[|Z_1\!-\!1|^2]}{1\!-\!\frac{m(2\theta)}{|m(\lambda)|^2}} < \infty
\text{ \ and \ }
\E[(Z(\lambda)\!-\!1)^2] = \frac{\E[(Z_1\!-\!1)^2] }{1\!-\!\frac{m(2\lambda)}{m(\lambda)^2}}.
\end{align}
\end{lemma}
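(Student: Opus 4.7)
The plan is to exploit the martingale structure of $(Z_n(\lambda))_{n \in \N_0}$ and reduce both identities to geometric series. First I would use the branching property at generation $n$ to write
\begin{equation*}
Z_{n+1}(\lambda)-Z_n(\lambda) = m(\lambda)^{-n} \sum_{|u|=n} e^{-\lambda S(u)}\bigl([Z_1(\lambda)]_u - 1\bigr),
\end{equation*}
a one-step analogue of \eqref{eq:decomposition nth gen}. Conditionally on $\F_n$, the variables $[Z_1(\lambda)]_u - 1$, $|u|=n$, are i.i.d., centered, and independent of the weights $e^{-\lambda S(u)}$.

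Next I would compute $\E[|Z_{n+1}-Z_n|^2 \mid \F_n]$ and $\E[(Z_{n+1}-Z_n)^2 \mid \F_n]$. Because the $[Z_1]_u-1$ are conditionally centered and independent, all off-diagonal contributions vanish, leaving on the diagonal the factors $\sigma_\lambda^2 e^{-2\theta S(u)}$ and $\E[(Z_1-1)^2] e^{-2\lambda S(u)}$ respectively. Taking an unconditional expectation collapses the sums via the many-to-one identities $\E\bigl[\sum_{|u|=n} e^{-2\theta S(u)}\bigr] = m(2\theta)^n$ and $\E\bigl[\sum_{|u|=n} e^{-2\lambda S(u)}\bigr] = m(2\lambda)^n$, yielding
\begin{equation*}
\E[|Z_{n+1}-Z_n|^2] = \sigma_\lambda^2 \bigl(m(2\theta)/|m(\lambda)|^2\bigr)^n, \qquad \E[(Z_{n+1}-Z_n)^2] = \E[(Z_1-1)^2]\bigl(m(2\lambda)/m(\lambda)^2\bigr)^n.
\end{equation*}

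Finally I would invoke the orthogonality of martingale differences both in the Hermitian and the bilinear senses (immediate from the tower property together with $\E[Z_{n+1}-Z_n \mid \F_n]=0$ once one observes that $Z_{k+1}-Z_k$ is $\F_n$-measurable for $k<n$). Both geometric ratios have modulus strictly less than one: the first by the assumption $m(2\theta)<|m(\lambda)|^2$, and the second because $|m(2\lambda)| \leq m(2\theta)<|m(\lambda)|^2$. Hence $(Z_n)$ is Cauchy in $L^2(\Prob)$, converges to $Z(\lambda)$ in $L^2$, and
\begin{equation*}
\E[|Z(\lambda)-1|^2] = \sum_{n \geq 0} \E[|Z_{n+1}-Z_n|^2], \qquad \E[(Z(\lambda)-1)^2] = \sum_{n \geq 0} \E[(Z_{n+1}-Z_n)^2].
\end{equation*}
Summing the two geometric series produces the asserted formulas. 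No step is a serious obstacle; the one minor point worth noting is that the hypothesis $\sigma_\theta^2<\infty$ enters only to upgrade to $\sigma_\lambda^2<\infty$, via the pointwise bound $|Z_1(\lambda)| \leq (m(\theta)/|m(\lambda)|) Z_1(\theta)$ already recorded in the excerpt.
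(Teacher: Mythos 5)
Your proposal is correct and follows essentially the same route as the paper: orthogonality of the martingale increments, the one-step branching decomposition to compute $\E[|Z_{n+1}-Z_n|^2]$ and $\E[(Z_{n+1}-Z_n)^2]$, and summation of the resulting geometric series (the paper additionally notes up front that $\E[|Z-1|^2]<\infty$ via conditions \eqref{eq:gamma moment Z_1(theta)} and \eqref{eq:contraction condition} with $\gamma=p=2$, but observes that the computation reproves this, exactly as you do). Your remark that $|m(2\lambda)|\leq m(2\theta)$ guarantees convergence of the second series is a correct and worthwhile detail.
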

\begin{proof}
Observe that \eqref{eq:gamma moment Z_1(theta)} and \eqref{eq:contraction condition} are satisfied with $\gamma=p=2$.
Consequently, $\E[|Z-1|^2] < \infty$.
In the next step, we calculate $\E[|Z-1|^2]$ and $\E[(Z-1)^2]$.
(Actually, the calculations below again give $\E[|Z-1|^2] < \infty$.)
As the increments of square-integrable martingales are uncorrelated,
\begin{align*}
\E[|Z-1|^2]
&= \lim_{n \to \infty} \E[(Z_n-1)(\overline{Z}_n-1)] = \sum_{n=0}^\infty \E[|Z_{n+1}-Z_{n}|^2]	\\
&= \E[|Z_1-1|^2] \sum_{n=0}^\infty \E\bigg[\sum_{|u|=n} \frac{e^{-2\theta S(u)}}{|m(\lambda)|^{2n}}\bigg]
= \frac{\E[|Z_1-1|^2]}{1-m(2\theta)/|m(\lambda)|^2}.
\end{align*}
Analogously, we infer
\begin{align*}
\E[(Z-1)^2]
&= \E[(Z_1-1)^2] \sum_{n=0}^\infty \E\bigg[\sum_{|u|=n} \frac{e^{-2\lambda S(u)}}{m(\lambda)^{2n}}\bigg]
= \frac{\E[(Z_1-1)^2] }{1-m(2\lambda)/m(\lambda)^2}.
\end{align*}
\end{proof}

Our combined proof of Theorems \ref{Thm:Cor 1.1 in Iksanov+Kabluchko:2016} and \ref{Thm:Gaussian boundary case} is based on an application of the Lindeberg-Feller central limit theorem.

\begin{proof}[Proof of Theorems \ref{Thm:Cor 1.1 in Iksanov+Kabluchko:2016} and \ref{Thm:Gaussian boundary case}]
Recall that $m = m(2\theta)$ if $|m(2\lambda)| < m(2\theta)$ and $m=m(2\lambda)$ if $|m(2\lambda)|=m(2\theta)$.
For $n \in \N$, define $c_n = 1$ in the situation of Theorem \ref{Thm:Cor 1.1 in Iksanov+Kabluchko:2016}
and $c_n \defeq n^{1/4}$ in the situation of Theorem \ref{Thm:Gaussian boundary case}.
Further, let $a_n \defeq c_n \frac{m(\lambda)^n}{m^{n/2}}$ for $n \in \N$.
Then \eqref{eq:decomposition nth gen} can be rewritten in the form
\begin{equation}	\label{eq:decomposition nth gen with Y'us}
a_n (Z-Z_n) = c_n \sum_{|u|=n} Y_u ([Z]_u-1).
\end{equation}
The right-hand side of \eqref{eq:decomposition nth gen with Y'us} given $\F_n$
is the sum of independent centered random variables.
We show that the distribution
of this sum given $\F_n$ converges in probability to the distribution
of a complex or real normal random variable.
To this end, we check the Lindeberg-Feller condition.
For any $\varepsilon > 0$, using that $|m|=m(2\theta)$, we obtain
\begin{align*}
\sum_{|u|=n} \! \E_n[|c_n Y_u ([Z]_u\!-\!1)|^2 \1_{\{|c_n Y_u ([Z]_u-1)|^2 > \varepsilon\}}]
&= c_n^2 \sum_{|u|=n} \! |Y_u|^2 \sigma_\lambda^2(\varepsilon c_n^{-2} |Y_u|^{-2})	\\
&= c_n^2 \sum_{|u|=n} \! \frac{e^{-2\theta S(u)}}{m(2 \theta)^{n}} \sigma_\lambda^2(\varepsilon c_n^{-2} |Y_u|^{-2})
\end{align*}
where, for $x \geq 0$,
\begin{equation*}
\sigma^2_\lambda(x) \defeq \E[|Z-1|^2 \1_{\{|Z-1|^2 > x\}}].
\end{equation*}
By Lemma \ref{Lem:L^2 norm of Z-1}, we have $\E[|Z-1|^2] < \infty$.
The dominated convergence theorem thus yields
$\sigma_\lambda^2(x) \downarrow 0$ as $x \uparrow \infty$.
Moreover, in the situation of Theorem \ref{Thm:Cor 1.1 in Iksanov+Kabluchko:2016},
\begin{equation*}
c_n^2 \sup_{|u|=n} |Y_u|^2 = \sup_{|u|=n} \frac{e^{-2\theta S(u)}}{m(2\theta)^{n}} \to 0	\quad	\text{a.\,s.\ as } n \to \infty
\end{equation*}
by \eqref{eq:Biggins:1998} (applied with $\theta$ replaced by $2\theta$).
In the situation of Theorem \ref{Thm:Gaussian boundary case},
\begin{equation*}
c_n^2 \sup_{|u|=n} |Y_u|^2 = n^{1/2} \sup_{|u|=n} e^{-V(u)} \to 0	\quad	\text{in } \Prob \text{-probability as } n \to \infty
\end{equation*}
by Proposition \ref{Prop:minimal position}.
In any case, we conclude that
\begin{align*}
\sum_{|u|=n} \! \E_n[|c_n Y_u ([Z]_u\!-\!1)|^2 \1_{\{|c_n Y_u ([Z]_u-1)|^2 > \varepsilon\}}] &	\\
\leq c_n^2 Z_n(2\theta) \sigma_\lambda^2(\varepsilon ({\textstyle c_n \sup_{|u|=n}} |Y_u|)^{-2}) & \to 0
\end{align*}
as $n \to \infty$ a.\,s.\ or in $\Prob$-probability, respectively, having utilized \eqref{eq:Aidekon+Shi} for the convergence in $\Prob$-probability.
By \eqref{eq:complex covariance}, covariance calculations can be reduced to calculations
for the second absolute (conditional) moment and the second (conditional) moment of $\frac{m(\lambda)^n}{m^{n/2}} (Z-Z_n)$:
\begin{align}
\E_n\big[|a_n (Z-Z_n)|^2\big]
&= c_n^2 \E_n\bigg[\bigg(\sum_{|u|=n} Y_u ([Z]_u-1)\bigg)\bigg(\sum_{|v|=n} \overline{Y}_v ([\overline{Z}]_v-1)\bigg)\bigg]	\notag	\\
&= c_n^2 \E_n\bigg[\sum_{|u|=n} |Y_u|^2 |[Z]_u-1|^2\bigg]
= \E[|Z-1|^2] c_n^2 \sum_{|u|=n} |Y_u|^2	\notag	\\
&= \E[|Z-1|^2] c_n^2 Z_n(2\theta).	\label{eq:second absolute conditional moment}
\end{align}
where the second equation follows from the fact that,
for $|u|=|v|=n$ with $u \not = v$,
$[Z]_u-1$ and $[Z]_v-1$ are independent and centered,
and hence the cross terms vanish.
The right-hand side of \eqref{eq:second absolute conditional moment}
converges to $\E[|Z-1|^2] Z(2\theta)$ a.\,s.\
in the situation of Theorem \ref{Thm:Cor 1.1 in Iksanov+Kabluchko:2016}
and to $\E[|Z-1|^2] (\frac{2}{\pi \sigma^2})^{1/2} D_\infty$ in $\Prob$-probability
in the situation of Theorem \ref{Thm:Gaussian boundary case}.
An analogous calculation gives
\begin{align}	\label{eq:second conditional moment}
\E_n\big[(a_n (Z-Z_n))^2]
= \E[(Z-1)^2] c_n^2 \sum_{|u|=n} Y_u^2.
\end{align}
We shall find the limit of the right-hand side of \eqref{eq:second conditional moment}, thereby verifying that the conditions \cite[Eqs.\;(2.5)--(2.7)]{Helland:1982} are fulfilled.
The claimed convergence then follows from the cited source
and the Cram\'er-Wold device \cite[p.\;87, Corollary 5.5]{Kallenberg:2002}.
In the situation of Theorem \ref{Thm:Cor 1.1 in Iksanov+Kabluchko:2016}, if $Z_n(2\theta) \to 0$ a.\,s., then $\sum_{|u|=n} Y_u^2\to 0$ a.\,s., so that nothing remains to be shown.
Thus, for the remainder of the proof, we suppose that $Z_n(2\theta)$ converges a.\,s.\ and in $L^1$ to $Z(2\theta)$ or that \eqref{eq:Aidekon+Shi} holds. We distinguish two cases.

\noindent
\emph{Case 1}: Let $|m(2\lambda)| < m(2\theta)$.
We apply Lemma \ref{Lem:cancellation} with $(L(u))_{u \in \Gen} = (Y_u^2)_{u \in \Gen}$.
In this case
\begin{equation*}
\E\bigg[\sum_{|u|=1} |L(u)|\bigg] = \E\bigg[\sum_{|u|=1} |Y_u|^2\bigg] = \E[Z_1(2\theta)] = 1.
\end{equation*}
Further,
\begin{equation*}
a \defeq \E\bigg[\sum_{|u|=1} L(u)\bigg] = \E\bigg[\sum_{|u|=1} Y_u^2\bigg] = \frac{m(2\lambda)}{m(2\theta)}
\end{equation*}
satisfies $|a|<1$.
When the assumptions of Theorem \ref{Thm:Cor 1.1 in Iksanov+Kabluchko:2016} hold, Lemma \ref{Lem:cancellation}(b) applies (with condition (i) satisfied)
and yields $\sum_{|u|=n} Y_u^2 \to 0$ in $\Prob$-probability. If, additionally, $2\theta \in \Lambda$, then
\begin{equation*}
\E\bigg[\sum_{|u|=1} |L(u)|^p\bigg] = \E\bigg[\sum_{|u|=1} |Y_u|^{2p} \bigg] = \frac{m(p2\theta)}{m(2\theta)^p} < 1
\end{equation*}
for some $p \in (1,2]$. Hence, $\sum_{|u|=n} Y_u^2 \to 0$ a.\,s.\ by Lemma \ref{Lem:cancellation}(a).
When the assumptions of Theorem \ref{Thm:Gaussian boundary case} hold,
we obtain $n^{1/2}\sum_{|u|=n} Y_u^2 \to 0$ in $\Prob$-probability by another appeal to Lemma \ref{Lem:cancellation}(b) (this time with condition (ii) satisfied).
Thus, under the assumptions of both theorems, the limit of the right-hand side of \eqref{eq:second conditional moment} vanishes.
\smallskip

\noindent
\emph{Case 2}: Let $|m(2\lambda)| = m(2\theta)$.
Then there exists some $\varphi \in [0,2\pi)$ such that $m(2\lambda) = m(2\theta) e^{\imag \varphi}$.
This implies $e^{-2 \imag \eta S(u)} = e^{\imag \varphi}$  for all $|u|=1$ a.\,s.,
equivalently, $S(u) \in \frac{-\varphi}{2\eta} + \frac{\pi}{\eta} \Z$ for all $|u|=1$ a.\,s.
Therefore, a.\,s.\ for every $u \in \Gen$,
\begin{align*}
e^{-\lambda S(u)} = e^{-\theta S(u)} e^{-\imag \eta S(u)} = \pm e^{\imag \varphi/2} e^{-\theta S(u)}
\end{align*}
and thereupon $m(\lambda) = e^{\imag \varphi/2} q$ where $q \in \R$ with $0<|q| \leq m(\theta)$.
Consequently, $Z_n(\lambda) \in \R$ a.\,s.\ for every $n \in \N_0$.
Thus, also $Z(\lambda) \in \R$ a.\,s.
Further $m(\lambda)^n/m^{n/2} = m(\lambda)^n/m(2\lambda)^{n/2} = q^n/m(2\theta)^{n/2} \in \R$.
Hence, all terms in \eqref{eq:second absolute conditional moment} and \eqref{eq:second conditional moment}
coincide and so do their limits.
\end{proof}

It is worth noting that in Case 2, in order to arrive at the stronger statement (weak convergence a.\,s.),
we do not need $2\theta \in \Lambda$,
but only require the uniform integrability of $(Z_n(2\theta))_{n \in \N_0}$ or equivalently \eqref{eq:nonzero}.

\section{The regime in which the extremal positions dominate}	\label{sec:extremal regime}

First recall that $V(u)$ is defined by \eqref{eq:V(u)}
and that $V_n(u)=V(u) - \frac32 \log n$ for $u \in \Gen_n$.
Further, for each $K\in\R$ define $f_K:\R \to [0,1]$ by
\begin{equation}	\label{eq:f_K}
f_K(x)	\defeq	\begin{cases}
				1		&	\text{for }	x \leq K,	\\
				K+1-x	&	\text{for }	K \leq x \leq K+1,	\\
				0		&	\text{for }	x \geq K+1.
				\end{cases}
\end{equation}
Our proof of Theorem \ref{Thm:domination of extremal particles}
is based on two lemmas about the processes $\mu_n$, $n \in \N$ and related point processes.
Proposition \ref{Prop:Madaule's theorem} tells us that
\begin{equation}	\label{eq:strength}	\textstyle
\int f \, \dmu_n \distto \int f \, \dmu_\infty	\quad	\text{as }	n \to \infty
\end{equation}
for all continuous and compactly supported $f:\R \to [0,\infty)$.
This taken together with information about the left tail of $\mu_n$ for large $n$ provided by \cite[Theorem 1.1]{Aidekon:2013} enables us to show
that relation \eqref{eq:strength} holds for a wider class of functions $f$.
This is the content of Lemma \ref{Lem:convergence of mu_n left-hand compactification}.

\begin{lemma}	\label{Lem:convergence of mu_n left-hand compactification}
Suppose that the assumptions of Theorem \ref{Thm:domination of extremal particles} are satisfied.
Then relation \eqref{eq:strength} holds for all continuous functions $f:\R \to [0,\infty)$ with $f(x) = 0$ for all sufficiently large $x$.
\end{lemma}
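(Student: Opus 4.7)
The plan is to combine the vague convergence given by Proposition \ref{Prop:Madaule's theorem} with a left-tail control of $\mu_n$ coming from \cite[Theorem 1.1]{Aidekon:2013}. The obstruction to applying Proposition \ref{Prop:Madaule's theorem} directly is that a continuous $f \geq 0$ vanishing at $+\infty$ need not be compactly supported: its support may extend to $-\infty$. However, Aidékon's result says that $\min_{|u|=n} V_n(u)$ is tight in $n$, so with high probability no atom of $\mu_n$ lies below any fixed threshold, and cutting $f$ off on $(-\infty,-L]$ will produce a negligible error.

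Concretely, I would fix $K$ with $f(x)=0$ for $x \geq K+1$ and, for $L>0$, introduce the truncation $h_L \defeq f \cdot (1 - f_{-L})$. Since $1 - f_{-L}$ is continuous, vanishes on $(-\infty,-L]$ and equals $1$ on $[-L+1,\infty)$, the function $h_L$ is continuous and compactly supported in $[-L,K+1]$, and $h_L \uparrow f$ pointwise as $L \to \infty$.

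I would then assemble three ingredients. First, for each fixed $L$, Proposition \ref{Prop:Madaule's theorem} gives $\int h_L \, \dmu_n \distto \int h_L \, \dmu_\infty$ as $n \to \infty$. Second, from \cite[Theorem 1.1]{Aidekon:2013} one reads off the uniform tail bound
\begin{equation*}
\lim_{L\to\infty} \sup_{n \in \N} \Prob\big(\min_{|u|=n} V_n(u) \leq -L+1\big) = 0,
\end{equation*}
and on the complement of this event every atom of $\mu_n$ lies in $[-L+1,\infty)$, where $f$ and $h_L$ agree, so $\int f \, \dmu_n$ and $\int h_L \, \dmu_n$ coincide with probability tending to $1$ uniformly in $n$. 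Third, since $\mu_\infty$ is a locally finite point measure, $\mu_\infty((-\infty,K+1])$ is $\Prob$-a.s.\ finite, so $\int f \, \dmu_\infty$ is a $\Prob$-a.s.\ finite sum of values of $f$; monotone convergence then yields $\int h_L \, \dmu_\infty \uparrow \int f \, \dmu_\infty$ $\Prob$-a.s.

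These three pieces combine via a standard three-epsilon argument: for any bounded continuous $\phi \colon \R \to \R$, one picks $L$ large so that the second and third ingredients make both $\sup_n|\E[\phi(\int f \, \dmu_n) - \phi(\int h_L \, \dmu_n)]|$ and $|\E[\phi(\int f \, \dmu_\infty) - \phi(\int h_L \, \dmu_\infty)]|$ arbitrarily small, and then lets $n \to \infty$ with that $L$ fixed to invoke the first ingredient. The main point that will require care is the uniform tightness of $\min_{|u|=n} V_n(u)$, but this is exactly what Aidékon's theorem provides under the hypotheses already in force in Theorem \ref{Thm:domination of extremal particles}.
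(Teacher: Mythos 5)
Your proposal is correct and follows essentially the same route as the paper: the truncation $h_L = f\cdot(1-f_{-L})$ is exactly the paper's $g_K$ with $K=-L$, the compactly supported part is handled by Proposition \ref{Prop:Madaule's theorem}, the left tail by A\"id\'ekon's tightness of the minimum, and the pieces are combined by the standard approximation theorem (\cite[Theorem 4.2]{Billingsley:1968}), which is your three-epsilon argument. The only cosmetic difference is that you claim a $\sup_n$ tail bound where the paper only needs (and states) $\lim_{L}\limsup_n$; both suffice here.
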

\begin{proof}
Pick an arbitrary continuous function $f:\R \to [0,\infty)$ satisfying $f(x) = 0$ for all sufficiently large $x$.
For any fixed $K\in\R$, the function $g_K(x) \defeq f(x) (1-f_K(x))$ is continuous and has a compact support.
Therefore, $\int g_K \dmu_n \distto \int g_K \dmu_\infty$ as $n \to \infty$ by Proposition \ref{Prop:Madaule's theorem}.
Since $\mu_\infty((-\infty,a]) < \infty$ a.\,s.\ for any $a\in\R$ by another appeal to Proposition \ref{Prop:Madaule's theorem}, we infer
\begin{equation}	\label{eq:remainder g_K}
\lim_{K \to -\infty} \int g_K \, \dmu_\infty = \int f \, \dmu_\infty	\quad	\text{a.\,s.}
\end{equation}
On the other hand, for any $\varepsilon > 0$,
\begin{align*}
\limsup_{n \to \infty}
\Prob\bigg(\bigg|\int f(x) f_K(x) \, \mu_n (\dx) \bigg| > \varepsilon\bigg)
&\leq \limsup_{n \to \infty} \Prob\big( \mu_n((-\infty, K+1])\geq 1\big)	\\
&= \limsup_{n \to \infty} \Prob\Big(\min_{|u|=n} V(u) - \tfrac32 \log n\leq K+1\Big)
\end{align*}
where $\min_\emptyset \defeq \infty$.
By \cite[Theorem 1.1]{Aidekon:2013},
\begin{equation*}
\lim_{K \to -\infty}\limsup_{n \to \infty} \Prob\Big(\min_{|u|=n} V(u) - \tfrac32 \log n\leq K+1\Big) = 0.
\end{equation*}
The latter limit relation, \eqref{eq:remainder g_K} and \cite[Theorem 4.2]{Billingsley:1968} imply
$\int f \dmu_n \distto \int f \dmu_\infty$.
\end{proof}

With Lemma \ref{Lem:convergence of mu_n left-hand compactification} at hand we can now show that for any $\gamma>1$
\begin{equation}	\label{eq:finite series}	\textstyle
\int e^{-\gamma x} \, \mu_{\infty}(\dx) = \sum_k e^{-\gamma P_k}<\infty	\quad	\text{a.\,s.}
\end{equation}
To see this, pick $M>0$ and consider the following chain of inequalities:
\begin{align*}
\Prob\bigg(\sum_j e^{-\gamma P_j} > M\bigg)
&= \sup_{K \in \N} \Prob\bigg(\sum_j e^{-\gamma P_j} f_K (P_j)> M\bigg)	\\
&\leq \sup_{K \in \N} \liminf_{n \to \infty} \Prob\bigg(\sum_{|u|=n} e^{-\gamma V_n(u)} f_K (V_n(u)) > M\bigg)	\\
&\leq \limsup_{n \to \infty} \Prob\bigg(\sum_{|u|=n} e^{-\gamma V_n(u)} > M\bigg)
\end{align*}
where Lemma \ref{Lem:convergence of mu_n left-hand compactification} and the Portmanteau theorem
have been used for the first inequality. The latter $\limsup$ tends to $0$ as $M \to \infty$ by \cite[Proposition 2.1]{Madaule:2017}.

Recall that $(Z^{(k)})_{k \in \N}$ denotes a sequence of independent copies of $Z(\lambda)-1$
which are also independent of $\mu_\infty = \sum_{k} \delta_{P_k}$.
We define point processes on $\R \times \C$ by
\begin{equation*}
\mu_\infty^* \defeq \sum_{k} \delta_{(P_k, Z^{(k)})}
\quad	\text{and}	\quad
\mu_n^* \defeq \sum_{|u|=n} \delta_{(V_n(u),[Z(\lambda)]_u-1)},	\quad	n \in \N.
\end{equation*}

\begin{lemma}	\label{Lem:convergence of mu_n^*}
Suppose that the assumptions of Theorem \ref{Thm:domination of extremal particles} are satisfied.
Then $\int f \, \dmu_n^* \distto \int f \, \dmu_\infty^*$ for all bounded continuous function $f:\R \times \C \to \C$
such that $f(x,z) = 0$ whenever $x$ is sufficiently large.
\end{lemma}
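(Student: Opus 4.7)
I would first establish vague convergence of the marked point processes $\mu_n^* \distto \mu_\infty^*$ on $\R \times \C$ by comparing Laplace functionals, exploiting the fact that conditionally on $\F_n$ the marks $([Z(\lambda)]_u - 1)_{|u|=n}$ are i.i.d.\ copies of $Z(\lambda) - 1$ (just as $\mu_\infty^*$ is built with i.i.d.\ marks). The statement for the test function $f$ of the lemma, whose support in the $x$-variable extends to $-\infty$ and which need not be compactly supported in the $z$-variable, is then obtained by a truncation argument.

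\textbf{Vague convergence via Laplace functionals.} For any non-negative continuous $h$ on $\R \times \C$ with compact support contained in $[a,b] \times B$, conditioning on $\F_n$ yields
\[
\E\left[e^{-\int h \, \dmu_n^*}\right] = \E\left[\prod_{|u|=n}\phi_h(V_n(u))\right] = \E\left[e^{-\int g_h \, \dmu_n}\right],
\]
where $\phi_h(x) \defeq \E[e^{-h(x,Z(\lambda)-1)}] \in [e^{-\|h\|_\infty},1]$ and $g_h \defeq -\log \phi_h$ is bounded, continuous, non-negative, and supported in $[a,b]$. Proposition \ref{Prop:Madaule's theorem} furnishes $\int g_h \, \dmu_n \distto \int g_h \, \dmu_\infty$; bounded convergence then upgrades this to $\E[e^{-\int h \, \dmu_n^*}] \to \E[e^{-\int g_h \, \dmu_\infty}]$, and the analogous conditioning on the right-hand side identifies the limit as $\E[e^{-\int h \, \dmu_\infty^*}]$. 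Since compactly supported non-negative continuous test functions form a determining class, this gives $\mu_n^* \distto \mu_\infty^*$ in the vague topology on $\R \times \C$.

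\textbf{Truncation and conclusion.} Fix $M$ with $f \equiv 0$ on $(M,\infty) \times \C$, and for each $K>0$ choose a continuous cutoff $\rho_K$ on $\R \times \C$ equal to $1$ on $[-K,M] \times \{|z| \leq K\}$ and supported in a bounded superset, with the boundary of $\rho_K$ selected so that $\mu_\infty^*$ a.s.\ places no mass on it. Then $f_K \defeq f \rho_K$ is bounded continuous with compact support, and vague convergence plus the continuous mapping theorem yield $\int f_K \, \dmu_n^* \distto \int f_K \, \dmu_\infty^*$. The remainder $\int (f - f_K) \, \dmu_n^*$ is bounded in modulus by $\|f\|_\infty$ times $\mu_n((-\infty,-K]) + \#\{|u|=n : V_n(u) \in [-K,M],\ |[Z(\lambda)]_u - 1| > K\}$. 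The first summand tends to $0$ in probability, uniformly in large $n$, by \cite[Theorem 1.1]{Aidekon:2013}; the conditional expectation of the second given $\F_n$ equals $\mu_n((-K,M]) \cdot \Prob(|Z(\lambda)-1|>K)$, in which $\mu_n((-\infty,M])$ is tight by Lemma \ref{Lem:convergence of mu_n left-hand compactification} and the probability $\Prob(|Z(\lambda)-1|>K) \to 0$ thanks to $\E[|Z(\lambda)|^p]<\infty$ from the hypotheses of Theorem \ref{Thm:domination of extremal particles}. Identical bounds on the right give $\int (f - f_K) \, \dmu_\infty^* \to 0$ a.s., and a standard approximation \cite[Theorem 4.2]{Billingsley:1968} concludes.

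\textbf{Main obstacle.} The only genuine difficulty beyond Proposition \ref{Prop:Madaule's theorem} and the tail estimates already present in Lemma \ref{Lem:convergence of mu_n left-hand compactification} is the mark-truncation estimate, which requires the moment control $\E[|Z(\lambda)|^p] < \infty$---precisely the hypothesis imported from Theorem \ref{Thm:domination of extremal particles}. The Laplace-functional reduction itself is effortless because $g_h = -\log \phi_h$ automatically inherits the compact support of $h$ in the $\R$-direction, which is what makes Proposition \ref{Prop:Madaule's theorem} directly applicable.
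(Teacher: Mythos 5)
Your proof is correct, but it takes a genuinely different route from the paper's. The paper's proof is a single Laplace-functional computation: since $f$ is bounded, $\varphi(x)=\E[\exp(-f(x,Z^{(1)}))]$ is bounded away from $0$, so $-\log\varphi$ is a bounded continuous nonnegative function vanishing for large $x$, and Lemma \ref{Lem:convergence of mu_n left-hand compactification} applies to it \emph{directly}; no truncation of the marks or of the left tail is needed, and the only extra step is the decomposition $f=f_1-f_2+\imag(f_3-f_4)$ with a Cram\'er--Wold-type argument to pass from nonnegative to complex-valued $f$. You instead first prove the stronger intermediate statement that $\mu_n^*\distto\mu_\infty^*$ vaguely on $\R\times\C$ (using only Proposition \ref{Prop:Madaule's theorem}, i.e.\ compactly supported test functions), and then recover the lemma by truncating in both coordinates: the left tail in $x$ is controlled by the A\"{\i}d\'ekon minimum estimate (which is exactly the ingredient hidden inside the proof of Lemma \ref{Lem:convergence of mu_n left-hand compactification}, so you are re-deriving that step rather than invoking the lemma), and the large marks are controlled by the conditional first-moment bound $\mu_n((-\infty,M])\cdot\Prob(|Z(\lambda)-1|>K)$ together with tightness of $\mu_n((-\infty,M])$. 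Two small remarks: the mark truncation needs only $\Prob(|Z(\lambda)-1|>K)\to0$, which holds because $Z(\lambda)$ is a.s.\ finite, so the moment hypothesis $\E[|Z(\lambda)|^p]<\infty$ is not actually what makes that step work; and the caveat about choosing $\rho_K$ so that $\mu_\infty^*$ charges no boundary is unnecessary, since $\mu\mapsto\int f_K\,\dmu$ is vaguely continuous for continuous compactly supported $f_K$. Your approach is longer but yields the vague convergence of the marked point process as a by-product; the paper's is shorter because the boundedness of $f$ lets the already-established Lemma \ref{Lem:convergence of mu_n left-hand compactification} absorb all the tail work.
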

\begin{proof}
We derive the assertion from Lemma \ref{Lem:convergence of mu_n left-hand compactification}.
More precisely, first let $f:\R \times \C \to [0,\infty)$
be an arbitrary continuous function such that $f(x,z) = 0$ for all $z \in \C$ whenever $x$ is sufficiently large.
Since the convergence $\int f \, \dmu_n^* \distto \int f \, \dmu_\infty^*$ is equivalent to the convergence of the corresponding Laplace transforms
it suffices to show that the Laplace functional of $\mu_n^*$ at $f$
converges to the Laplace functional of $\mu_\infty^*$ at $f$.
To this end, define $\varphi(x) \defeq \E[\exp(-f(x,Z^{(1)}))]$ for $x\in\R$.
Clearly, $0 < \varphi \leq 1$. Further, the continuity of $f$ together with the dominated convergence theorem imply that $\varphi$ is continuous.
Therefore, $-\log \varphi:\R \to [0,\infty)$
is continuous. Since $f(x,z)=0$ for all sufficiently large $x$, the same is true for $- \log \varphi$.
Lemma \ref{Lem:convergence of mu_n left-hand compactification}
implies that $\int (-\log \varphi(x)) \, \mu_n(\dx) \distto \int (-\log \varphi(x)) \, \mu_\infty(\dx)$.
Using this, we find that the Laplace functional of $\mu_n^*$ evaluated at $f$ satisfies
\begin{align*}
\E \bigg[\exp\bigg(-\int f(x,y) \, \mu_n^*(\dx,\dy) \bigg) \bigg]
&= \E \bigg[ \E_n \bigg[ \exp\bigg(-\sum_{|u|=n} f(V_n(u),[Z(\lambda)]_u-1) \bigg) \bigg] \bigg]	\\
&= \E \bigg[\prod_{|u|=n} \varphi(V_n(u)) \bigg]	\\
&= \E \bigg[ \exp\bigg(-\!\sum_{|u|=n}\!(-\log \varphi(V_n(u))) \bigg)\bigg]	\\
&= \E \bigg[ \exp\bigg(-\!\int (-\log \varphi(x)) \, \mu_n(\dx)\bigg)\bigg]	\\
&\to \E \bigg[ \exp\bigg(-\!\int (-\log \varphi(x)) \, \mu_\infty(\dx)\bigg)\bigg]	\\
&= \E \bigg[\exp\bigg(-\int f(x,y) \, \mu_\infty^*(\dx,\dy) \bigg) \bigg].
\end{align*}
This completes the proof for nonnegative $f$.
For the general case, we decompose $f=f_1-f_2+\imag(f_3-f_4)$ with $f_{j}:\R \times \C \to [0,\infty)$ vanishing for large $x$.
Then for any nonnegative $\lambda_j$, from the first part, we conclude
\begin{equation*}
 \int (\lambda_1f_1+\lambda_2f_2+\lambda_3f_3+\lambda_4f_4) \, \dmu_n^*\distto \int (\lambda_1f_1+\lambda_2f_2+\lambda_3f_3+\lambda_4f_4) \, \dmu_\infty^*
\end{equation*}
and, in particular, we infer  $(\int f_j \, \dmu_n^*)_{j=1,\ldots,4} \distto (\int f_j \, \dmu_\infty^*)_{j=1,\ldots,4}$
from which we deduce the convergence  $\int f \, \dmu_n^* \distto \int f \, \dmu_\infty^*$.
\end{proof}

We now make the final preparations for the proof of Theorem \ref{Thm:domination of extremal particles}.
We have to show that $z_n (Z(\lambda)-Z_n(\lambda))$ converges in distribution
where
\begin{equation*}
z_n \defeq n^{\tfrac{3\lambda}{2 \vartheta}} \Big(\frac{m(\lambda)}{m(\vartheta)^{\lambda/\vartheta}}\Big)^n,	\quad	n \in \N.
\end{equation*}
We shall use the decomposition
\begin{align*}
z_n (Z(\lambda)-Z_n(\lambda))&
= \frac{z_n}{m(\lambda)^{n}} \sum_{|u|=n}e^{-\lambda S(u)}([Z(\lambda)]_u-1)		\\
&= \frac{z_n}{m(\lambda)^{n}} e^{n \frac{\lambda}{\vartheta} \log m(\vartheta)} \sum_{|u|=n}e^{-\frac{\lambda}{\vartheta} V(u)}([Z(\lambda)]_u-1)	\\
&=\sum_{|u|=n}e^{-\frac{\lambda}{\vartheta} V_n(u)}([Z(\lambda)]_u-1)	\\
&=\sum_{|u|=n}e^{-\frac{\lambda}{\vartheta} V_n(u)} f_K(V_n(u)) ([Z(\lambda)]_u-1)	\\
&\hphantom{=}+ \sum_{|u|=n}e^{-\frac{\lambda}{\vartheta} V_n(u)} \big((1 - f_K(V_n(u)))\cdot([Z(\lambda)]_u-1)\big)	\\
&\eqdef Y_{n,K}+R_{n,K}.
\end{align*}
We first check that the contribution of $R_{n,K}$ is negligible as $K$ tends to infinity.

\begin{lemma}	\label{Lem:remainder estimate}
If the assumptions of Theorem \ref{Thm:domination of extremal particles} hold,
then, for any $\delta>0$ and every measurable $h_K: \R \to [0,1]$ satisfying $0 \leq h_K \leq \1_{[K,\infty)}$
\begin{equation*}
\lim_{K\to\infty} \limsup_{n \to \infty} \Prob \bigg(\bigg|\sum_{|u|=n} \!\! e^{-\frac{\lambda}{\vartheta} V_n(u)} h_K(V_n(u)) ([Z(\lambda)]_u\!-\!1)\bigg| > \delta \bigg) = 0.
\end{equation*}
\end{lemma}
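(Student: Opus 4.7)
The plan is to combine a conditional $p$-th moment estimate via the von Bahr--Esseen inequality with the exponential gain obtained by weakening the exponent from $q = p\theta/\vartheta$ down to some $r\in(1,q)$.

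Fix $p\in (\vartheta/\theta,2]$ with $\E[|Z(\lambda)|^p]<\infty$ and put $q\defeq p\theta/\vartheta>1$. Write $S_{n,K}$ for the sum whose probability we control. Conditionally on $\F_n$, $S_{n,K}$ is a sum of independent centered random variables $a_u([Z(\lambda)]_u-1)$, $|u|=n$, with $\F_n$-measurable weights $a_u=e^{-\lambda V_n(u)/\vartheta}h_K(V_n(u))$. The conditional von Bahr--Esseen inequality (applicable since $p\in(1,2]$) yields
\[
\E_n[|S_{n,K}|^p]\leq 2\,\E[|Z(\lambda)-1|^p]\sum_{|u|=n}|a_u|^p =2\,\E[|Z(\lambda)-1|^p]\!\!\sum_{|u|=n}e^{-qV_n(u)}h_K(V_n(u))^p.
\]
Since $h_K\in[0,1]$ and $p\geq1$, one has $h_K^p\leq h_K\leq \1_{[K,\infty)}$, so $\E_n[|S_{n,K}|^p]\leq C R_{n,K}$, where $C\defeq 2\E[|Z(\lambda)-1|^p]<\infty$ and
\[
R_{n,K}\defeq \sum_{|u|=n}e^{-qV_n(u)}\1_{V_n(u)\geq K}.
\]

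Next I would pick any $r\in(1,q)$ and exploit the strict decay on $\{V_n(u)\geq K\}$, where $e^{-qV_n(u)}\leq e^{-(q-r)K}e^{-rV_n(u)}$. This gives the pointwise bound
\[
R_{n,K}\leq e^{-(q-r)K}\xi_n^{(r)},\qquad \xi_n^{(r)}\defeq \sum_{|u|=n}e^{-rV_n(u)}.
\]
The decisive external input is that $(\xi_n^{(r)})_{n\in\N}$ is tight; this follows from \cite[Proposition 2.1]{Madaule:2017} applied with parameter $r>1$, which is exactly the input already used in the derivation of \eqref{eq:finite series} immediately after Lemma \ref{Lem:convergence of mu_n left-hand compactification}.

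Combining the two bounds with the conditional Markov inequality and the elementary estimate $\E[1\wedge X]\leq \eta+\Prob(X>\eta)$ valid for $X\geq 0$, one obtains for every $\eta>0$
\[
\Prob(|S_{n,K}|>\delta)\leq \E[1\wedge (C\delta^{-p}R_{n,K})]\leq \eta+\Prob\bigl(\xi_n^{(r)}>\eta\delta^p C^{-1}e^{(q-r)K}\bigr).
\]
By tightness of $(\xi_n^{(r)})_n$, the probability on the right tends to $0$ as $K\to\infty$ uniformly in $n$. Sending first $K\to\infty$ and then $\eta\to0$ delivers the claim. The only delicate point I foresee is double-checking that Madaule's tightness result indeed covers the auxiliary exponent $r$, but since $r>1$ and the standing hypotheses on the BRW $V$ do not involve $r$, this is not a genuine obstacle.
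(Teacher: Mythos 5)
Your argument is correct and follows essentially the same route as the paper's: a conditional $p$-th moment bound for the sum of independent centered variables given $\F_n$, the exponential gain $e^{-(q-r)K}$ from lowering the exponent from $q=p\theta/\vartheta$ to some $r\in(1,q)$, and tightness of $\sum_{|u|=n}e^{-rV_n(u)}$ from Madaule's Proposition 2.1 (the paper injects the tightness via a good event $\mathcal Q_n^{\comp}$ and uses its Topchi\u\i--Vatutin Lemma \ref{Lem:TV-ineq complex} with constant $4$ instead of von Bahr--Esseen, but this is cosmetic). The only point to tidy up is that the von Bahr--Esseen constant $2$ is for real-valued summands; for complex summands one gets a slightly larger absolute constant, which is immaterial here.
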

\begin{proof}
Let $\varepsilon, \delta > 0$
and
$1<\beta<\beta_0\defeq p \cdot \frac\theta\vartheta$.
From Proposition 2.1 in \cite{Madaule:2017}, we know that the sequence of distributions
of the random variables
$\sum_{|u|=n}e^{-\beta V_n(u)}$, $n \in \N$ is tight.
Therefore, there is an $M > 0$ such that 
$\sup_{n \in \N} \Prob(\mathcal Q_n) \leq \varepsilon$
where
\begin{equation*}
\mathcal Q_n
\defeq \Big\{\sum_{|u|=n}e^{-\beta V_n(u)}>M\Big\}.
\end{equation*}
Then
\begin{align*}
\Prob&\bigg(\bigg|\sum_{|u|=n} \!\! e^{-\frac{\lambda}{\vartheta} V_n(u)} h_K(V_n(u))([Z(\lambda)]_u-1) \bigg| > \delta\bigg)	\\
&\leq \Prob\bigg(\bigg|\sum_{|u|=n} \!\! e^{-\frac{\lambda}{\vartheta} V_n(u)} h_K(V_n(u))([Z(\lambda)]_u-1) \bigg| > \delta, \mathcal Q_n^\comp \bigg) + \varepsilon.
\end{align*}
We estimate the above probability using the following strategy. First, we use Markov's inequality for the function $x \mapsto |x|^p$.
Then, given $\F_n$, we apply Lemma \ref{Lem:TV-ineq complex}.
This gives
\begin{align}
\Prob&\bigg(\bigg|\sum_{|u|=n} \!\! e^{-\frac{\lambda}{\vartheta} V_n(u)} h_K(V_n(u)) ([Z(\lambda)]_u\!-\!1)\bigg| > \delta, \mathcal Q_n^\comp \bigg)	\notag	\\
&\leq  \frac4{\delta^{p}} \cdot \E[|Z(\lambda)\!-\!1|^p] \cdot \E\bigg[\sum_{|u|=n} \!\! e^{-\beta_0 V_n(u)} h_K(V_n(u))^p \1_{\mathcal Q_n^\comp} \bigg]	\notag	\\
&\leq  \frac4{\delta^{p}} \cdot \E[|Z(\lambda)\!-\!1|^p] \cdot e^{(\beta-\beta_0)K} \E\bigg[\sum_{|u|=n} \!\! e^{-\beta V_n(u)} \1_{\mathcal Q_n^\comp} \bigg]	\notag	\\
&\leq  \frac4{\delta^{p}} \cdot \E[|Z(\lambda)\!-\!1|^p] \cdot e^{(\beta-\beta_0)K} M.	\label{eq:bound for 1st prob}
\end{align}
The above bound does not depend on $n$ and, moreover,
tends to $0$ as $K \to \infty$. The latter is obvious  since $\beta < \beta_0$ and thus $\lim_{K \to \infty} e^{(\beta-\beta_0)K} = 0$.

\noindent
We conclude that
\begin{equation*}
\lim_{K\to\infty} \limsup_{n \to \infty} \Prob \bigg(\bigg|\sum_{|u|=n} \!\! e^{-\frac{\lambda}{\vartheta} V_n(u)} h_K(V_n(u)) ([Z(\lambda)]_u\!-\!1)\bigg| > \delta \bigg) \leq \varepsilon.
\end{equation*}
The assertion follows as we may choose $\varepsilon$ arbitrarily small.
\end{proof}

We are now ready to prove Theorem \ref{Thm:domination of extremal particles}.

\begin{proof}[Proof of Theorem \ref{Thm:domination of extremal particles}]
Define
\begin{equation*}
\hat Y_0\defeq 0\quad \text{and}\quad \hat Y_n \defeq \sum_{k=1}^n e^{-\frac{\lambda}{\vartheta}P_k^\ast}Z^{(k)}(\lambda),	\quad	n\in\N
\end{equation*}
and recall the notation $\beta_0= p \cdot \frac\theta\vartheta > 1$. Given $\mu_\infty$, for each $n\in\N$,
the random variable $\hat Y_n$ is the sum of complex-valued independent centered random variables.
An application of Lemma \ref{Lem:TV-ineq complex} yields
\begin{align*}
\E\big[|\hat Y_n|^p\big|\mu_\infty\big] &\leq 4\ \E[|Z(\lambda)\!-\!1|^p] \cdot \sum_{k=1}^n  e^{-\beta_0 P_k^\ast}
\leq 4\ \E[|Z(\lambda)\!-\!1|^p] \cdot \sum_{k}  e^{-\beta_0 P_k},
\end{align*}
and the latter sum is almost surely finite by \eqref{eq:finite series}. This shows that $(\hat Y_n)_{n\in\N_0}$, conditionally on $\mu_\infty$,
is an $L^p$-bounded martingale.
We conclude that $\hat Y_n$ converges a.\,s.\ conditionally on $\mu_\infty$, hence, also unconditionally thereby proving the first part of Theorem \ref{Thm:domination of extremal particles}.

The proof of the second part is based on an application of Theorem 4.2 in \cite{Billingsley:1968}
and the decomposition
\begin{align*}
z_n (Z(\lambda)-Z_n(\lambda)) = Y_{n,K}+R_{n,K}.
\end{align*}
In view of Lemma \ref{Lem:remainder estimate}, the cited theorem gives the assertion
once we have shown the following two assertions:
\begin{enumerate}[1.]
	\item	$Y_{n,K} \distto Y_K$ as $n \to \infty$ for every fixed $K>0$ where $Y_K$ is some finite random variable;
	\item	$Y_K \distto X_{\mathrm{ext}}$ as $K \to \infty$.
\end{enumerate}
The first assertion is a consequence of Lemma \ref{Lem:convergence of mu_n^*}.
Indeed, the function $(x,z) \mapsto  e^{-\frac{\lambda}{\vartheta} x} f_K(x) z$ is continuous and vanishes for all sufficiently large $x$.
Therefore, Lemma \ref{Lem:convergence of mu_n^*} yields
\begin{align*}
Y_{n,K} &= \sum_{|u|=n} e^{-\frac{\lambda}{\vartheta} V_n(u)} f_K(V_n(u)) ([Z(\lambda)]_u-1)	\\
&=		\int e^{-\frac{\lambda}{\vartheta} x} f_K(x) z \, \mu_n^*(\dx,\dz)
\distto	\int e^{-\frac{\lambda}{\vartheta} x} f_K(x) z \, \mu_\infty^*(\dx,\dz)
\eqdef	Y_K.
\end{align*}
Set $\hat Y\defeq \sum_k
e^{-\frac{\lambda}{\vartheta}P_k}Z^{(k)}(\lambda)$ and note that $\hat Y \eqdist X_{\mathrm{ext}}$.
To see that the second assertion holds,
we prove that $\E[|\hat Y-Y_K|^p|\mu_\infty]\to0$ a.\,s.\ as $K \to \infty$
which entails $Y_K\Probto \hat Y$ as $K\to\infty$.
To this end, we use (an infinite version of) Lemma \ref{Lem:TV-ineq complex} to obtain
\begin{align*}
 \E\big[|\hat Y-Y_K|^p|\mu_\infty]
 &\leq 4 \E[|Z(\lambda)-1|^p\big]\cdot \sum_{k}e^{-\beta_0 P_k}\big(1-f_K(P_k))^p\\
 &\leq 4 \E[|Z(\lambda)-1|^p]\cdot \sum_{k}e^{-\beta_0 P_k}\1_{\{P_k > K\}}.
\end{align*}
In view of \eqref{eq:finite series} the right-hand side converges to zero a.\,s.\ as $K\to\infty$. The proof of Theorem \ref{Thm:domination of extremal particles} is complete.
\end{proof}

\section{The boundary $\partial \Lambda^{(1,2)}$}

Throughout this section, we fix $\lambda \in \Dom$ and suppose that
\begin{equation*}	\tag{C1}	\textstyle
\frac{m(\alpha\theta)}{|m(\lambda)|^\alpha} = 1
\quad	\text{and}	\quad
\frac{\theta m'(\theta \alpha)}{|m(\lambda)|^\alpha} = \log(|m(\lambda)|)
\end{equation*}
holds with $\alpha \in (1,2)$,
i.e., $\lambda \in \partial \Lambda^{(1,2)}$ and that
there are $\gamma \in (\alpha,2]$ and $\kappa \in (\frac\alpha2,1)$ such that
\begin{equation*}
\text{\eqref{ass:Lgamma}}\quad	\E[Z_1(\theta)^\gamma] < \infty
\qquad	\text{and}	\qquad
\text{\eqref{ass:L2_locally}}\quad\E\big[Z_1(\kappa \theta)^2] < \infty.
\end{equation*}
As before, for $n \in \N_0$ and $u \in \Gen_n$, we set $L(u) \defeq e^{-\lambda S(u)}/m(\lambda)^n$,
and abbreviate $Z_n(\lambda)$ and $Z(\lambda)$ by $Z_n$ and $Z$, respectively.
Notice that $\sum_{|u|=n} |L(u)|^\alpha = \sum_{|u|=n} e^{-V(u)}$ for each $n \in \N_0$
where $V(u)$ is defined in \eqref{eq:V(u)},
i.e., $V(u)	\defeq \alpha \theta S(u) + |u| \log (m(\alpha \theta))$.
The assumptions of Theorem \ref{Thm:tail Z(lambda) in Lambda^(1,2)} guarantee that \eqref{eq:Aidekon+Shi} holds, that is,
\begin{equation}	\label{eq:asymptotics sum |L(u)|^alpha}
\sqrt{n} \sum_{|u|=n} |L(u)|^\alpha \Probto \sqrt{\frac{2}{\pi \sigma^2}} D_\infty.
\end{equation}
Indeed, \eqref{eq:C1} and \eqref{ass:Lgamma}
entail conditions \eqref{eq:normalized_V}, \eqref{eq:sigma^2<infty Aidekon+Shi} and \eqref{eq:Aidekon+Shi condition},
which are sufficient for \eqref{eq:Aidekon+Shi} to hold. To be more precise, \eqref{eq:C1} implies \eqref{eq:normalized_V}.
Further, the function
\begin{equation*}
\R \ni t \mapsto m_V(t) = \E\bigg[\sum_{|u|=1} e^{-tV(u)}\bigg] = \frac{m(\alpha \theta t)}{m(\alpha\theta)^t}
\end{equation*} is finite at $t=1/\alpha < 1$ since $\lambda \in \Dom$ satisfies \eqref{eq:C1}
and at $t=\gamma/\alpha>1$
since, by superadditivity,
\begin{equation*}
\bigg(\sum_{|u|=1} e^{-\theta S(u)}\bigg)^{\!\!\gamma} \geq \sum_{|u|=1} e^{-\gamma\theta S(u)},
\end{equation*}
and \eqref{ass:Lgamma} holds.
Therefore, $m_V$ is finite on $[1/\alpha,\gamma/\alpha]$ and analytic on $(1/\alpha,\gamma/\alpha)$.
In particular, the second derivative is finite at $t=1$, which yields \eqref{eq:sigma^2<infty Aidekon+Shi}.
Again by superadditivity, we conclude that
\begin{equation*}
\bigg(\sum_{|u|=1} e^{-\theta S(u)}\bigg)^{\!\!\gamma} \geq \bigg(\sum_{|u|=1} e^{-\alpha \theta S(u)}\bigg)^{\!\!\gamma/\alpha}.
\end{equation*}
Thus, \eqref{ass:Lgamma} implies the first condition in \eqref{eq:Aidekon+Shi condition}.
To see that the second condition in \eqref{eq:Aidekon+Shi condition} also holds,
pick $\delta > 0$ such that $\alpha-\delta > 1$ and use
\begin{equation*}
\bigg(\sum_{|u|=1} e^{-\theta S(u)}\bigg)^{\!\!\gamma} \geq \bigg(\sum_{|u|=1} e^{-(\alpha-\delta) \theta S(u)}\bigg)^{\!\!\frac\gamma{\alpha-\delta}}
\geq \delta^{\frac\gamma{\alpha-\delta}} \cdot \bigg(\sum_{|u|=1} e^{-\alpha \theta S(u)} (\theta S(u))_+ \bigg)^{\!\!\frac\gamma{\alpha-\delta}}.
\end{equation*}

\subsection{Martingale fluctuations on $\partial \Lambda^{(1,2)}$}

First, we show how from the knowledge of the tail behaviour of $Z(\lambda)$
we can deduce Theorem \ref{Thm:fluctuations on partial Lambda^(1,2)}.
To this end, suppose that the assumptions of Theorem \ref{Thm:tail Z(lambda) in Lambda^(1,2)} are satisfied.
Set
$W=Z-1$ and observe that $W$ has the same tail behavior as $Z$, i.e.,
\begin{equation}	\label{eq:tail of W}
\lim_{\substack{|z|\to 0, \\ z\in \gr}}\E[|z|^{-\alpha}\phi(zW)] = {\textstyle \int \phi \, \dnu}
\end{equation}
for any $\phi \in C_{\mathrm{c}}^{2}(\Chat \setminus \{0\})$.
To see that this is true, first notice that, for any $w \in \Chat \setminus \{0\}$ and  $z \in \gr$ such that $|z| \leq 1$, we have
\begin{equation*}
|\phi(w)-\phi(w-z)| = |z| \Big|\frac{\phi(w)-\phi(w-z)}{z} \Big| \leq |z| \sup_{u}|\nabla\phi(u)|\1_{P_1}(w),
\end{equation*}
where  $P_j$ is the $j$-neighborhood of $\supp \, \phi$, i.e.,  $P_j=\{u:|u-t| \leq j \mbox{ for some } t \in \supp \, \phi\}$.
Setting $\chi(w)\defeq \sup_{u}|\nabla \phi(u)| \1_{P_2}*\chi_0(w)$
where $\chi_0:\C \to [0,\infty)$ is a probability density function smooth on $\C$ and supported by the unit disc,
we infer that $\chi \in C_{\mathrm{c}}^{2}(\Chat \setminus \{0\})$ and
\begin{equation*}
|\phi(w)-\phi(w-z)| \leq |z| \chi(w).
\end{equation*}
Hence,
\begin{equation*}
\bigg|\lim_{\substack{|z|\to 0, \\ z\in \gr}} \E[|z|^{-\alpha}\phi(zW)] - \lim_{\substack{|z|\to 0, \\ z\in \gr}} \E[|z|^{-\alpha}\phi(zZ)]\bigg|
\leq \lim_{\substack{|z|\to 0, \\ z\in \gr}} |z| \cdot \E[|z|^{-\alpha} \chi(zZ)] = 0
\end{equation*}
where Theorem \ref{Thm:tail Z(lambda) in Lambda^(1,2)} has been used.

Our first result in this section is a corollary of Theorem \ref{Thm:tail Z(lambda) in Lambda^(1,2)}.
Recall that, for a complex number $z \in \C$, we sometimes write $z_1 = \Real(z)$ and $z_2 = \Imag(z)$.

\begin{corollary}	\label{Cor:Lambda^(1,2) expectation and covariance}
In the situation of Theorem \ref{Thm:tail Z(lambda) in Lambda^(1,2)}, for every $h>0$ with $\nu(\{y:|y|=h\})=0$
and every $j,k=1,2$,
we have
\begin{align}	\label{eq:Lambda^(1,2) covariance}
\lim_{\substack{|z|\to 0, \\ z\in \gr}} |z|^{-\alpha} \E[(zW)_j(zW)_k|;|zW| \leq h]	&=	\int_{\{|y|< h\}} y_j y_k \, \nu(\dy)	\\
\label{eq:Lambda^(1,2) expectation}
\text{and}	\quad
\lim_{\substack{|z|\to 0, \\ z\in \gr}} |z|^{-\alpha} \E[zW;|zW| \leq h]	&=	-\int_{\{|y|>h\}} y \, \nu(\dy).
\end{align}
\end{corollary}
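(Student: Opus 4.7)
The plan is to derive both identities from Theorem~\ref{Thm:tail Z(lambda) in Lambda^(1,2)} by approximating the non-smooth integrands $y \mapsto y_j y_k \1_{\{|y|\leq h\}}$ and $y \mapsto y\1_{\{|y|\leq h\}}$ by members of $C_{\mathrm{c}}^2(\Chat \setminus \{0\})$. The approximation must handle two failure modes: the indicator is discontinuous at $\{|y| = h\}$, and $y_j y_k$ does not vanish in a neighborhood of $0$. The key preliminary input beyond Theorem~\ref{Thm:tail Z(lambda) in Lambda^(1,2)} is the $\alpha$-stable tail bound $\Prob(|W| \geq t) \leq Ct^{-\alpha}$, which follows from Proposition~\ref{Prop:heavy tail bound Z(lambda)} applied to $W = Z-1$. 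From this one deduces, uniformly in small $|z|$, the estimates
\begin{equation*}
|z|^{-\alpha}\E[|zW|^2;|zW|\leq\epsilon] \leq C\epsilon^{2-\alpha} \quad\text{and}\quad |z|^{-\alpha}\E[|zW|;|zW|\geq R] \leq CR^{1-\alpha},
\end{equation*}
obtained by the change of variables $s=t/|z|$ in $\Prob(|zW|\geq t)=\Prob(|W|\geq t/|z|)$, using $\alpha<2$ and $\alpha>1$ respectively.

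For the covariance identity, fix small $\epsilon,\delta > 0$ and choose $C^2$-smooth cutoffs with $\1_{[\epsilon,\infty)} \leq \chi_\epsilon \leq \1_{[\epsilon/2,\infty)}$, $\1_{[0,h-\delta]} \leq \psi_\delta^- \leq \1_{[0,h]}$, and $\1_{[0,h]} \leq \psi_\delta^+ \leq \1_{[0,h+\delta]}$. Setting $\phi^\pm_{\epsilon,\delta}(y) \defeq y_j y_k\, \chi_\epsilon(|y|)\, \psi_\delta^\pm(|y|)$ gives members of $C_{\mathrm{c}}^2(\Chat \setminus \{0\})$, and Theorem~\ref{Thm:tail Z(lambda) in Lambda^(1,2)} yields $|z|^{-\alpha}\E[\phi^\pm_{\epsilon,\delta}(zW)] \to \int \phi^\pm_{\epsilon,\delta}\,\dnu$. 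The pointwise bound
\begin{equation*}
|y_jy_k\1_{\{|y|\leq h\}} - \phi^\pm_{\epsilon,\delta}(y)| \leq |y|^2\bigl(\1_{\{|y|\leq \epsilon\}} + \1_{\{h-\delta\leq|y|\leq h+\delta\}}\bigr)
\end{equation*}
splits the error into a near-zero piece, bounded by $C\epsilon^{2-\alpha}$ uniformly in $z$, and a boundary piece, which I handle by dominating $|y|^2\1_{\{h-\delta\leq|y|\leq h+\delta\}}$ from above by a $C^2_c$ bump, applying Theorem~\ref{Thm:tail Z(lambda) in Lambda^(1,2)}, and invoking $\nu(\{|y|=h\})=0$. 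Sending $\epsilon \to 0$ then $\delta \to 0$ delivers \eqref{eq:Lambda^(1,2) covariance}.

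For the expectation identity, the centering $\E[W]=0$, which holds since $\E[Z(\lambda)]=1$ under the standing assumptions, gives $\E[zW;|zW|\leq h] = -\E[zW;|zW|>h]$, so it suffices to show $|z|^{-\alpha}\E[zW;|zW|>h]\to \int_{\{|y|>h\}}y\,\dnu$. Here the near-zero issue disappears because $h > 0$. I approximate $y\1_{\{|y|>h\}}$ by $\phi_{R,\delta}(y) \defeq y\,(1-\psi_\delta^+(|y|))\,\chi_R(|y|)$ with $\chi_R$ a smooth cutoff satisfying $\1_{[0,R]}\leq\chi_R\leq\1_{[0,R+1]}$, so that $\phi_{R,\delta} \in C_{\mathrm{c}}^2(\Chat \setminus \{0\})$, and apply Theorem~\ref{Thm:tail Z(lambda) in Lambda^(1,2)}. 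The boundary error near $|y|=h$ vanishes as $\delta\to 0$ by the no-atom assumption on $\nu$, and the tail error is $|z|^{-\alpha}\E[|zW|;|zW|\geq R]\leq CR^{1-\alpha}\to 0$ as $R\to\infty$. Finiteness of $\int_{\{|y|>h\}}|y|\,\dnu$ is ensured by the $(\gr,\alpha)$-invariance of $\nu$ together with $\alpha>1$, exactly as in the integration-by-parts computation used to define the characteristic exponent $\Psi$ after Theorem~\ref{Thm:tail Z(lambda) in Lambda^(1,2)}.

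The main obstacle is obtaining the uniform-in-$z$ tail control of the moments of $zW$; once this is done, both assertions reduce to bookkeeping in vague convergence. The second-moment near-zero estimate genuinely uses $\alpha<2$, and the first-moment tail estimate uses $\alpha>1$, so the proof is intrinsically tied to the regime $\alpha\in(1,2)$.
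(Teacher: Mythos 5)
Your argument is correct and follows essentially the same route as the paper's proof: smooth truncations of the integrands near $|y|=h$, the uniform small-ball second-moment bound $|z|^{-\alpha}\E[|zW|^2;|zW|\leq\epsilon]\lesssim\epsilon^{2-\alpha}$, the uniform tail first-moment bound $|z|^{-\alpha}\E[|zW|;|zW|\geq R]\lesssim R^{1-\alpha}$, the centering $\E[W]=0$, and the no-atom condition at $|y|=h$. The only (immaterial) difference is that you source the uniform tail control from the bound $\Prob(|W|\geq t)\leq Ct^{-\alpha}$ of Proposition \ref{Prop:heavy tail bound Z(lambda)}, whereas the paper extracts the equivalent estimate $\sup_{0<|z|\leq 1}|z|^{-\alpha}\Prob(|zW|>1)\leq C$ directly from Theorem \ref{Thm:tail Z(lambda) in Lambda^(1,2)} by dominating an indicator with a smooth test function.
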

\begin{proof}
We start with some preparations. Throughout the proof, when letting $|z|\to 0$ it is tacitly assumed that $z\in\gr$.
First, observe that
\begin{equation}	\label{eq:sup trunc 2nd moment}	\textstyle
\limsup_{|z| \to 0} |z|^{-\alpha} \E[|zW|^2; |zW|<\delta] \to 0	\quad	\text{as }\delta\to0.
\end{equation}
To see this, first choose a nonnegative function $\phi \in C_{\mathrm{c}}^{2}(\Chat \setminus \{0\})$
satisfying $\phi \geq \1_{\{|z| \geq 1\}}$.
Then, by \eqref{eq:tail of W},
\begin{equation*}	\textstyle
|z|^{-\alpha} \Prob(|zW|>1) \leq |z|^{-\alpha} \E[\phi(zW)] \to \int \phi \, \dnu
\end{equation*}
as $|z| \to 0$.
In particular, there is a finite constant $C > 0$ such that
\begin{equation}	\label{eq:bounded by C}
\sup_{0 < |z| \leq 1} |z|^{-\alpha} \Prob(|zW|>1) \leq C.
\end{equation}
In order to prove \eqref{eq:sup trunc 2nd moment}, pick $\delta \in (0,1)$.
We may suppose that $0 < |z| < \delta$.
Then
\begin{align*}
|z|^{-\alpha}\E[|zW|^2;|zW|<\delta]
&\leq |z|^{-\alpha}\E[(|zW|\wedge \delta)^2] \\
&= |z|^{-\alpha} \int_0^{|z|} 2t \Prob(|zW|>t) \, \dt +  |z|^{-\alpha} \int_{|z|}^{\delta} 2t \Prob(|zW|>t) \, \dt.
\end{align*}
The first integral can be bounded above by
\begin{align*}
|z|^{-\alpha} \int_0^{|z|} 2t \, \dt = |z|^{2-\alpha} \leq \delta^{2-\alpha}.
\end{align*}
Regarding the second integral, use \eqref{eq:bounded by C} to arrive at
\begin{align*}
|z|^{-\alpha} \int_{|z|}^{\delta} 2t \Prob(|zW|>t) \, \dt
= 2 \int_{|z|}^{\delta} t^{1-\alpha} \Big(\frac{|z|}{t}\Big)^{-\alpha}  \Prob\Big(\Big|\frac zt W\Big|>1\Big) \, \dt
\leq \frac{2C \delta^{2-\alpha}}{2-\alpha}.
\end{align*}
In conclusion, \eqref{eq:sup trunc 2nd moment} holds. Further, we have to show that
\begin{equation}	\label{eq:sup 1st moment tail}	\textstyle
\limsup_{|z| \to 0} |z|^{-\alpha} \E[|zW|; |zW| > K] \to 0	\quad	\text{as } K \to \infty.
\end{equation}
Indeed, in view of \eqref{eq:bounded by C}, we find
\begin{align*}
&\limsup_{|z| \to 0} |z|^{-\alpha} \E[|zW|; |zW| > K]	\\
&~= \limsup_{|z| \to 0}\bigg[\int_K^\infty t^{-\alpha} \Big|\frac{z}{t}\Big|^{-\alpha} \Prob\Big(\Big|\frac{z}{t}W\Big| > 1\Big) \, \dt 
+ K^{1-\alpha}\Big|\frac{z}{K}\Big|^{-\alpha}\Prob\Big(\Big|\frac{z}{K}W\Big| > 1\Big)\bigg]	\\
&~\leq CK^{1-\alpha}\Big(\frac{1}{\alpha-1}+1\Big)=\frac{CK^{1-\alpha}\alpha}{\alpha-1},
\end{align*}
which tends to zero as $K \to \infty$. Hence, \eqref{eq:sup 1st moment tail} holds.

We are ready to prove \eqref{eq:Lambda^(1,2) expectation}. To this end,
observe that $\E[zW;|zW| \leq h]=-\E[zW;|zW| > h]$ since $\E[W]=0$.
Now pick $0 < \delta < h < K$ such that $h+\delta < K$ and that
\begin{equation}\label{eq:cont}
\nu(\{y: |y|=h\}) = 0.
\end{equation}
Let $\phi\in C_{\mathrm{c}}^2(\Chat\setminus\{0\})$
be of the form $\phi(z) = z f(|z|)$ with twice continuously differentiable $f:[0,\infty) \to [0,1]$
satisfying $f(z)=0$ for $z\leq h$ and $f(z)=1$ for $z\in [h+\delta, K]$.
Then
\begin{align*}
&\limsup_{|z|\to 0}
\bigg|\int_{\{|y| > h\}} y \, \nu(\dy) - |z|^{-\alpha}\E[zW;  |zW| > h]\bigg|\\	
&~\leq \limsup_{|z|\to 0}
\bigg|\int \phi(y) \, \nu(\dy) - |z|^{-\alpha}\E[\phi(zW)]\bigg|	\\
&+ \int_{\{h < |y| < h+\delta\}} |y-\phi(y)| \, \nu(\dy) + \limsup_{|z| \to 0} |z|^{-\alpha}\E[|zW-\phi(zW)|; h<|zW|< h+\delta] 
\\
&~\hphantom{\leq}
+\int_{\{|y| > K\}} |y-\phi(y)| \, \nu(\dy) + \limsup_{|z| \to 0} |z|^{-\alpha} \E[|zW-\phi(zW)|; |zW| > K]\\
&~\leq
\int_{\{h < |y| < h+\delta\}} |y| \, \nu(\dy) + \limsup_{|z| \to 0} |z|^{-\alpha} (h+\delta) \Prob(h < |zW| < h+\delta)	\\
&~\hphantom{\leq}
+\int_{\{|y| > K\}} |y| \, \nu(\dy) + \limsup_{|z| \to 0} |z|^{-\alpha} \E[|zW|; |zW| > K]
\end{align*}
having utilized \eqref{eq:tail of W} and $|y-\phi(y)|\leq y$ for $|y| \in \C$.
The first (second) term on the right-hand side converges to zero as $\delta\to 0$ in view of \eqref{eq:cont}
(and suitable approximation of $\1_{\{h<|z|<h+\delta\}}$ by twice continuously differentiable functions with subsequent application of \eqref{eq:tail of W}).
The third and fourth term tend to $0$ as $K \to \infty$ by \eqref{eq:sup 1st moment tail}
and since $\int_{\{|y| \geq 1\}} |y| \, \nu(\dy) < \infty$.
The latter follows from the fact that $\nu(\{|y| \geq t\}) = t^{-\alpha} \nu(\{|y| \geq 1\})$
which is due to the $(\gr,\alpha)$-invariance of $\nu$.

Turning to the proof of \eqref{eq:Lambda^(1,2) covariance}, we fix $h>0$ satisfying \eqref{eq:cont} and pick $j,k \in \{1,2\}$.
For $0 < \delta < h/2$, choose $f \in C_{\mathrm{c}}^2((0,\infty))$ taking values in $[0,1]$
with $f = 0$ on $(0,\delta/2]$, $f = 1$ on $[\delta,h-\delta]$ and $f=0$ on $[h+\delta,\infty)$.
Define $\phi\in C_{\mathrm{c}}^2(\Chat\setminus\{0\})$ via $\phi(z) = z_j z_k f(|z|)$, $z \in \Chat$.
In particular, $\phi(z)=z_jz_k$ for $\delta \leq |z| \leq h-\delta$ and $\phi(z)=0$ for $|z|>h+\delta$.
Using \eqref{eq:tail of W} with this $\phi$ and \eqref{eq:sup trunc 2nd moment}
and arguing along the lines of the proof of \eqref{eq:Lambda^(1,2) expectation}, we conclude that
\begin{align*}
\limsup_{|z|\to 0}
& \bigg||z|^{-\alpha}\E[(zW)_j(zW)_k;|zW| \leq h]-\int_{\{|y| \leq h\}} y_j y_k \, \nu(\dy)\bigg|	\\
& \leq \limsup_{|z| \to 0} |z|^{-\alpha}\E[|zW|^2;|zW|<\delta]	\\
& \hphantom{\leq} +(h+\delta)^2 \limsup_{|z|\to 0} |z|^{-\alpha} \Prob(h-\delta<|zW|\leq h+\delta)	\\
& \hphantom{\leq} +\int_{\{|y| < \delta\}} |y|^2 \, \nu(\dy) + (h+\delta)^2 \nu(\{h-\delta<|y|\leq h+\delta\}).
\end{align*}
This bound tends to $0$ as $\delta \to 0$. We conclude that \eqref{eq:Lambda^(1,2) covariance} holds.
\end{proof}

We are now ready to prove Theorem \ref{Thm:fluctuations on partial Lambda^(1,2)}.

\begin{proof}[Proof of Theorem \ref{Thm:fluctuations on partial Lambda^(1,2)}]
For any strictly increasing sequence of natural numbers,
we can pass to a subsequence $(n_k)_{k \in \N}$
such that the convergence in \eqref{eq:asymptotics sum |L(u)|^alpha} and \eqref{eq:minimal position exponentiated} hold a.\,s.\ along this subsequence.
Once more, we use 
decomposition \eqref{eq:decomposition nth gen}.
First, we show that the triangular array $\{n_k^{w/(2\alpha)} L(u) ([Z]_u-1)\}_{|u|=n_k, k \in \N}$ is a null array.
Indeed,
\begin{equation*}
\sup_{|u|=n_k} \E_{n_k}[|n_k^{\frac{w}{2\alpha}} L(u)([Z]_u-1)| \wedge 1] \leq \E[|Z-1|] \cdot n_k^{\frac1{2\alpha}} \sup_{|u|=n_k} e^{-\frac1\alpha V(u)} \to 0 \text { a.\,s.}
\end{equation*}
as $k \to \infty$ by \eqref{eq:minimal position exponentiated}.
According to \cite[Theorem 15.28 and p.\;295]{Kallenberg:2002},
it suffices to prove that, for every $h>0$ with $\nu(\{z:|z|=h\})=0$,
\begin{align}
\label{eq:Kallenberg 15.28(i)}
\sum_{|u|=n_k} \!\!\! \Law(n_k^{\frac{w}{2\alpha}} L(u)[W]_u \mid \F_{n_k})	&\to	c D_{\infty}\nu	\text { vaguely in } \Chat \setminus \{0\}, \\
\label{eq:Kallenberg 15.28(ii)}
\sum_{|u|=n_k} \!\!\! \Cov_{n_k}[n_k^{\frac{w}{2\alpha}} L(u)[W]_u;|n_k^{\frac{1}{2\alpha}}L(u)[W]_u| \leq h]
&\to c D_{\infty} \!\!\!\! \underset{\{|z| \leq h\}}{\int} \!\!\!\! zz^\transp \, \nu(\dz)	\text{ a.\,s.},	\\
\label{eq:Kallenberg 15.28(iii)}
\sum_{|u|=n_k} \!\!\! \E_{n_k}[n_k^{\frac{w}{2\alpha}} L(u)[W]_u;|n_k^{\frac{1}{2\alpha}}L(u)[W]_u| \leq h]
&\to -c D_{\infty} \!\!\!\! \underset{\{|z| \leq h\}}{\int} \!\!\!\! z \, \nu(\dz) \text{ a.\,s.}
\end{align}
where $c=\sqrt{\frac{2}{\pi \sigma^2}}$.
Take any $\phi\in C_{\mathrm{c}}^2(\Chat \setminus \{0\})$.
Then, by \eqref{eq:tail of W},
\begin{align*}
\lim_{k \to \infty} \sum_{|u|=n_k} \E_{n_k}[\phi(n_k^{\frac{w}{2\alpha}}L(u)[W]_u)]
= \lim_{k \to \infty} n_k^{1/2} \sum_{|u|=n_k} |L(u)|^{\alpha} {\textstyle \int \phi \, \dnu}
=  c D_{\infty} {\textstyle \int \phi \, \dnu}
\end{align*}
a.\,s.\  proving \eqref{eq:Kallenberg 15.28(i)}.
Similarly, for \eqref{eq:Kallenberg 15.28(ii)} and \eqref{eq:Kallenberg 15.28(iii)},
we can apply \eqref{eq:Lambda^(1,2) covariance} and \eqref{eq:Lambda^(1,2) expectation}, respectively.
As a result we conclude that for any bounded continuous function $\psi: \C \to \R$
it holds that
\begin{equation}	\label{eq:convergence subsequence}
\E_{n_k}[\psi(n_k^{1/2\alpha}(Z-Z_{n_k}))] \to \E[\psi(X_{c D_{\infty}}) | \F_{\infty}]	\quad	\text{a.\,s.}
\end{equation}
with $c$ as before.
To summarize, we have shown that from any deterministic strictly increasing sequence of positive integers,
we can extract a deterministic subsequence $(n_k)_{k \in \N}$ such that \eqref{eq:convergence subsequence} holds.
In other words, for every bounded and continuous $\psi: \C \to \R$,
\begin{equation*}
\E_n[\psi(n^{\frac{w}{2\alpha}}(Z-Z_n))] \Probto \E[\psi(X_{c D_{\infty}}) | \F_{\infty}]	\quad	\text{as } n \to \infty,
\end{equation*}
i.e., \eqref{eq:Lambda^(1,2) weakly in probability} holds.
\end{proof}

\subsection{The tail behavior of $Z(\lambda)$ for $\lambda \in \partial \Lambda^{(1,2)}$}

\subsubsection*{An upper bound on the tails of the distribution of $Z(\lambda)$ for $\lambda \in \partial \Lambda^{(1,2)}$.}

\begin{proof}[Proof of Proposition \ref{Prop:heavy tail bound Z(lambda)}]
Proposition \ref{Prop:heavy tail bound Z(lambda)} can be proved along the lines of the proof of Theorem 2.1 in \cite{Kolesko+Meiners:2017}.
Equation (4.3) in the cited source carries over to the present situation, so it suffices to show that the truncated martingale
$(Z_n^{(t)}(\lambda))_{n \in \N_0}$ with increments
\begin{equation*}
Z_n^{(t)}-Z_{n-1}^{(t)} = \sum_{|u|=n-1} L(u) \1_{\{|L(u|_j)| \leq t \text{ for } j=0,\ldots,n-1\}}([Z_1]_u-1)
\end{equation*}
satisfies
\begin{equation*}
\sup_{n \in \N_0} \E[|Z_n^{(t)}-1|^p] \leq \text{const} \cdot t^{\gamma-\alpha}
\end{equation*}
where $1 \leq p < \alpha$ and the constant is independent of $t$. (This bound is analogous to (4.7) in \cite{Kolesko+Meiners:2017}.)
To prove the above uniform bound, one may argue as in the proof of \cite[Theorem 2.1]{Kolesko+Meiners:2017}
with $\phi(x) \defeq |x|^\gamma$.
What is more, the fact that this function is multiplicative and satisfies the assumptions of Lemma \ref{Lem:TV-ineq complex} (Topchi\u\i-Vatutin inequality for complex martingales),
allows for a substantial simplification of the proof given in \cite[Theorem 2.1]{Kolesko+Meiners:2017}.
A combination of the uniform moment bound above with formula (4.3) in \cite{Kolesko+Meiners:2017} yields the desired tail bound
$\Prob(|Z(\lambda)|>t) \leq \text{const} \cdot t^{-\alpha}$ for all $t>0$.
\end{proof}

\subsubsection*{Existence of the L\'evy measure $\nu$.}
We now prove the following, more detailed version of Theorem \ref{Thm:tail Z(lambda) in Lambda^(1,2)}.
The claim that the L\'{e}vy measure $\nu$ is non-zero which is not covered by Theorem \ref{Thm:tail Z(lambda) in Lambda^(1,2)} will be justified in the next subsection.
Recall that $\varrho$ denotes the Haar measure on $\gr$ normalized according to \eqref{eq:Haar normalization}.

\begin{theorem}	\label{Thm:tail Z(lambda) in Lambda^(1,2) with details}
Suppose that $\lambda \in \Dom$ and that the assumptions of Theorem \ref{Thm:tail Z(lambda) in Lambda^(1,2)} are satisfied.
Then there is a $(\gr,\alpha)$-invariant L\'evy measure $\nu$ on $\C \setminus \{0\}$ such that for any $\phi \in C_{\mathrm{c}}^{2}(\Chat \setminus \{0\})$,
we have
\begin{align}
\int \phi \, \dnu
&= \lim_{\substack{|z|\to 0, \\ z\in \gr}} |z|^{-\alpha}\E[\phi(zZ)]	\notag	\\
&= -\frac{2}{\sigma^2}\int |z|^{-\alpha} \log|z| \bigg(\E[\phi(zZ)]-\sum_{|u|=1}\E[\phi(zL(u)[Z]_u)]\bigg) \, \varrho(\dz)	\label{eq:int phi dnu}
\end{align}
where $\sigma^2=\E\big[\sum_{|u|=1}|L(u)|^{\alpha}(\log |L(u)|)^2\big]$.
Moreover,
\begin{equation}	\label{eq:kappa(alpha)=0}
\int |z|^{-\alpha} \bigg(\E[\phi(zZ)]-\sum_{|u|=1}\E[\phi(zL(u)[Z]_u)]\bigg) \, \varrho(\dz) = 0.
\end{equation}
\end{theorem}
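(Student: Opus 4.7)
The plan is to turn the smoothing equation $Z = \sum_{|u|=1}L(u)[Z]_u$ into a renewal-type functional equation for $h(z)\defeq \E[\phi(zZ)]$ and then apply potential-kernel asymptotics on $\gr$ for the size-biased spine walk. Conditioning on $\F_1$ and using that $([Z]_u)_{|u|=1}$ is i.i.d.\ and independent of $\F_1$, one obtains
\begin{equation*}
h(z) = \E\bigg[\sum_{|u|=1}h(zL(u))\bigg] + \Delta(z),
\end{equation*}
where $\Delta(z)$ is as in the theorem; iterating this gives for every $n \in \N$
\begin{equation*}
h(z) = \E\bigg[\sum_{|u|=n}h(zL(u))\bigg] + \sum_{k=0}^{n-1}\E\bigg[\sum_{|u|=k}\Delta(zL(u))\bigg].
\end{equation*}
Proposition \ref{Prop:heavy tail bound Z(lambda)} combined with the compact support of $\phi$ away from $0$ gives $|h(z)| \leq C|z|^{\alpha}$; a second-order Taylor expansion of $\phi$ using \eqref{ass:L2_locally} shows that $\tilde\Delta(z)\defeq |z|^{-\alpha}\Delta(z)$ decays at $0$ sufficiently fast to be integrable against $|\log|z||\,\varrho(\mathrm{d}z)$, while the tail bound and $\alpha$-scaling ensure integrability at $\infty$.

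The auxiliary identity \eqref{eq:kappa(alpha)=0} follows from the left-invariance of the Haar measure $\varrho$: the change of variable $w=zg$ yields $\int f(zg)|z|^{-\alpha}\varrho(\mathrm{d}z) = |g|^{\alpha}\int f(w)|w|^{-\alpha}\varrho(\mathrm{d}w)$, so integrating the functional equation against $|z|^{-\alpha}\varrho(\mathrm{d}z)$ and using the boundary-case identity $\E[\sum_{|u|=1}|L(u)|^{\alpha}]=1$ cancels the $h$-integrals and leaves $\int\Delta(z)|z|^{-\alpha}\varrho(\mathrm{d}z)=0$. A truncation argument on $\varrho$ is required since the integrals involving $h$ are only conditionally convergent.

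For the main limit I would divide the iterated equation by $|z|^{\alpha}$ and apply the many-to-one formula: using $|L(u)|^{\alpha} = e^{-V(u)}$, the $k$th summand becomes $\E[\tilde\Delta(zL(u_k))]$, where along the spine $(u_k)_{k\geq 0}$ the process $V(u_k)=\Sigma_k$ is a centered 1D random walk of variance $\alpha^{2}\sigma^{2}$ (centering from \eqref{eq:normalized_V}, finite second moment from \eqref{ass:Lgamma} through analyticity of the Laplace transform of $V$). With the substitution $x = -\alpha\log(|g|/|z|)$, the potential-kernel asymptotic $a(x)\sim |x|/(\alpha^{2}\sigma^{2})$ for the recurrent walk $\Sigma_k$ converts the (divergent) renewal measure into a finite contribution: \eqref{eq:kappa(alpha)=0} kills the leading $\log|z|$ coefficient, and what survives is precisely $-(2/\sigma^{2})\int |g|^{-\alpha}\log|g|\,\Delta(g)\,\varrho(\mathrm{d}g)$. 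The remainder $|z|^{-\alpha}\E[\sum_{|u|=n}h(zL(u))]$ is controlled as $n\to\infty$ by combining the uniform bound $|h(\cdot)|\leq C|\cdot|^{\alpha}$ with \eqref{eq:Aidekon+Shi} and the local central limit theorem for $\Sigma_n$. The $(\gr,\alpha)$-invariance of the resulting measure $\nu$ is then inherited from the invariance of the right-hand side under $z\mapsto zu^{-1}$ for $u\in\gr$, and the L\'{e}vy-measure property follows from $\alpha$-scaling together with the fact that $\int\phi\,\dnu$ is finite for every $\phi\in C_{\mathrm{c}}^{2}(\Chat\setminus\{0\})$.

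The principal obstacle is the renewal step in the third paragraph: one must pin down precisely the $\log|z|$-asymptotics of the potential kernel of the recurrent size-biased spine walk and exploit the cancellation provided by \eqref{eq:kappa(alpha)=0}, which in turn rests on the uniform integrability of $\tilde\Delta$ against $|\log|z||\,\varrho(\mathrm{d}z)$ at both ends of $\gr$ and on controlling the tail residual $\E[\sum_{|u|=n}h(zL(u))]$ uniformly in $z\in\gr$.
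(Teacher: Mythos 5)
Your skeleton coincides with the paper's: pass to $f(z)=|z|^{-\alpha}\E[\phi(zZ)]$, write the Poisson-type equation $f(z)=\E[f(zR_1)]+\tilde\Delta(z)$ for the size-biased multiplicative walk $(R_n)$ on $\gr$ defined by \eqref{eq:Ef(R_n)}, bound $\tilde\Delta$ near $0$ by a second-order estimate resting on \eqref{ass:L2_locally} (this is Proposition~\ref{Prop:Buraczewski+Damek+Mentemeier:2013 Lemma 6.2} in the paper) and near $\infty$ via Proposition~\ref{Prop:heavy tail bound Z(lambda)}, then extract the limit by renewal asymptotics on $\gr$. But your route to \eqref{eq:kappa(alpha)=0} does not work as described. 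After truncating $\varrho$ to $\{\varepsilon\le|z|\le M\}$ and changing variables $w=zL(u)$, the two $h$-integrals do not cancel: they differ by boundary terms supported on the annuli between $\{|w|=\varepsilon\}$ and $\{|w|=\varepsilon|L(u)|\}$ (and likewise at $M$). The outer terms vanish because $f$ decays at infinity, but the inner ones converge, if at all, to an average of $f$ over shrinking annuli at $0$ weighted by $\E[\sum_{|u|=1}|L(u)|^{\alpha}\log|L(u)|]$ --- so controlling them requires knowing that $f(w)$ converges as $|w|\to0$, which is the very assertion of the theorem. The paper avoids this circularity by obtaining \eqref{eq:kappa(alpha)=0} as an \emph{output} of the renewal analysis: in Proposition~\ref{Prop:renewal asymptotic}, if $\int\tilde\Delta\,\mathrm{d}\varrho\neq0$ the Green's-function series in \eqref{eq:Green's function decomp} diverges while the dual side of \eqref{eq:renewal equation} converges, a contradiction.

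The renewal step itself also has a gap. The remainder $|z|^{-\alpha}\E[\sum_{|u|=n}h(zL(u))]=\E[f(zR_n)]$ does not vanish as $n\to\infty$: since $\log|zR_n|$ spreads over $\pm C\sqrt{n}$, the CLT forces it toward $\tfrac12\lim_{|w|\to0}f(w)$, i.e.\ half the quantity you are computing. If you discard it, the potential-kernel asymptotic $a(x)\sim|x|/s^2$ combined with $\int\tilde\Delta\,\mathrm{d}\varrho=0$ produces the constant $1/\sigma^2$, not $2/\sigma^2$; the missing factor of $2$ comes precisely from the boundary condition $f(z)\to0$ as $|z|\to\infty$ on the other side of the recurrent walk. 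The paper encodes this two-sided structure through the duality lemma and the Wiener--Hopf identity $\E[\log|R_\tau|]\,\E[\log|R_{T_1}|]=-\sigma^2/2$ in the proof of Proposition~\ref{Prop:renewal asymptotic}. Finally, your argument computes what the limit must be but not that it exists: you still need the vague compactness of the family $\nu_z(\cdot)=|z|^{-\alpha}\Prob(zZ\in\cdot)$ (from Proposition~\ref{Prop:heavy tail bound Z(lambda)}) together with the fact that bounded harmonic functions of the neighborhood-recurrent walk are constant, which is what makes all subsequential limits agree and simultaneously yields the $(\gr,\alpha)$-invariance.
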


For the proof of Theorem \ref{Thm:tail Z(lambda) in Lambda^(1,2) with details}, we need the following proposition.

\begin{proposition}	\label{Prop:renewal asymptotic}
Suppose that $(R_n)_{n \in \N_0}$ is a neighborhood recurrent multiplicative random walk on $\gr$
such that $\E[\log|R_1|]=0$ and $\sigma^2 \defeq \E[(\log|R_1|)^2] \in (0,\infty)$.
Further, suppose that $f,h: \gr \to \R$ are continuous functions satisfying
$|f(z)| \leq c_f (1 \wedge |z|^{-\delta})$ and $|h(z)| \leq c_h (|z|^{\delta} \wedge |z|^{-\delta})$
for some constants $c_f,c_h, \delta > 0$ and
\begin{equation*}
f(z)=\E[f(zR_1)]+h(z)	\quad	\text{for all } z \in \gr.
\end{equation*}
If there exist a sequence $(z_n)_{n \in \N}$ in $\gr$ with $z_n \to 0$ and a continuous function $g$ such that $f(z_n z) \to g(z)$ for all $z \in \gr$,
then
\begin{equation}	\label{eq:int h = 0}
\int h(z) \, \varrho(\dz) = 0
\quad	\text{and}	\quad
\lim_{\substack{|z|\to0,\\ z\in\gr}} f(z) = -\frac{2}{\sigma^2} \int h(z) \log |z| \, \varrho(\dz).
\end{equation}
\end{proposition}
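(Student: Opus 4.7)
The plan is to lift the equation to logarithmic coordinates, where it becomes a Markov renewal equation on $\R\times\gr_1$ driven by a centered, finite-variance random walk, and then combine the Choquet-Deny theorem (to identify the subsequential limit $g$ as a constant) with Spitzer-type potential theory (to compute this constant).

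First I would pass to coordinates. Via the isomorphism $T\colon\gr_1\times\R_{>}\to\gr$, $(\omega,t)\mapsto\omega\gamma_t$, and the substitution $x=\log t$, identify $\gr$ with $\gr_1\times\R$ and $\varrho$ with $\varrho_1\otimes\mathrm{Leb}$. Set $\tilde f(x,\omega)\defeq f(\gamma_{e^x}\omega)$ and $\tilde h(x,\omega)\defeq h(\gamma_{e^x}\omega)$; the hypotheses translate into $|\tilde f(x,\omega)|\le c_f(1\wedge e^{-\delta x})$ and $|\tilde h(x,\omega)|\le c_h e^{-\delta|x|}$. Letting $T_n\defeq\log|R_n|$ (a centered random walk on $\R$ with variance $\sigma^2$) and $\Theta_n$ the angular part of $R_n$, the multiplicative walk $(R_n)$ corresponds to the additive walk $(T_n,\Theta_n)$ on the abelian group $\R\times\gr_1$; this is neighborhood recurrent by assumption, the equation reads $\tilde f(x,\omega)=\E[\tilde f(x+T_1,\omega\Theta_1)]+\tilde h(x,\omega)$, and $\int h(z)\log|z|\,\varrho(\dz)=\iint y\,\tilde h(y,\omega')\,\dy\,\varrho_1(\mathrm{d}\omega')$.

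Next, I would use the subsequential hypothesis to show that $g$ is constant. Evaluating the equation at $z_nz$ and letting $n\to\infty$, the error $h(z_nz)$ vanishes because $|z_nz|\to 0$ and $|h(w)|\le c_h|w|^\delta$ for small $|w|$; the uniform bound $|f|\le c_f$ legitimates dominated convergence in the expectation, yielding $g(z)=\E[g(zR_1)]$. Hence $g$ is a bounded continuous $(R_n)$-harmonic function on the abelian group $\gr$, and since $(R_n)$ is neighborhood recurrent the Choquet-Deny theorem forces $g\equiv c$ for some $c\in\R$.

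The final and most delicate step identifies $c$ and derives $\int h\,d\varrho=0$. Iterating the equation gives
\[
\tilde f(x,\omega)=\sum_{k=0}^{N-1}\E[\tilde h(x+T_k,\omega\Theta_k)]+\E[\tilde f(x+T_N,\omega\Theta_N)],
\]
with left-hand side bounded by $c_f$. A local central limit theorem for $(T_k,\Theta_k)$ (the finite second moment $\sigma^2<\infty$ giving the radial LCLT, and \eqref{ass:nonarithmeticity} ensuring equidistribution of the angular component on $\gr_1$) yields $\E[\tilde h(x+T_k,\omega\Theta_k)]\sim(2\pi k\sigma^2)^{-1/2}\int\tilde h$, so a non-zero $\int\tilde h$ would force the partial sums to grow like $\sqrt{N}\int\tilde h$, contradicting boundedness; this establishes \eqref{eq:int h = 0}. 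With $\int\tilde h=0$ in hand, Spitzer-Port-Stone potential theory supplies a potential kernel $a((x,\omega),(y,\omega'))=\sigma^{-2}|y-x|+O(1)$ for the centered finite-variance walk on $\R\times\gr_1$, and the associated Green function $G\tilde h=-\iint a\cdot\tilde h\,\dy\,\varrho_1(\mathrm{d}\omega')$ is bounded and solves $(I-P)G\tilde h=\tilde h$. By Choquet-Deny once more, $\tilde f-G\tilde h$ is a constant, pinned down by matching at $x\to+\infty$ where $\tilde f\to 0$. Computing $G\tilde h(\pm\infty)$ via $|y-x|=\pm(y-x)$ for $|x|$ large (with tails controlled by the exponential decay of $\tilde h$) gives $G\tilde h(+\infty)-G\tilde h(-\infty)=\tfrac{2}{\sigma^2}\iint y\,\tilde h(y,\omega')\,\dy\,\varrho_1(\mathrm{d}\omega')$, whence $c=-\tfrac{2}{\sigma^2}\int h(z)\log|z|\,\varrho(\dz)$. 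Independence of $c$ from the chosen subsequence then upgrades subsequential to full convergence. The principal obstacle is the rigorous two-dimensional potential-kernel expansion when $\gr_1$ may be the full unit circle, i.e., a Markov renewal theorem for centered finite-variance walks on $\R\times\mathbb{T}$, for which assumption \eqref{ass:nonarithmeticity} is precisely the non-arithmeticity hypothesis required.
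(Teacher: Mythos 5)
Your outline is viable and reaches the right constants, but it takes a genuinely different route from the paper, and it leaves its hardest ingredients as black boxes. The paper never develops recurrent potential theory for the zero-drift walk on $\gr$. Instead it (i) shows $g$ is constant exactly as you do (via the bounded martingale $g(zR_n)$, recurrence and continuity of $g$ -- no Choquet--Deny needed); (ii) applies a duality lemma (\cite[Lemma 4]{Kolesko:2013}) with the first passage time $\tau=\inf\{n:|R_n|<1\}$ and the ascending ladder epochs $T_k$ to convert the iterated equation into
$\E[\int_{\{|R_\tau|\le|z|<1\}}f(z_nz)\,\varrho(\dz)]=-\sum_k\E[\int_{\{|z|>|R_{T_k}z_n|\}}h(z)\,\varrho(\dz)]$;
and (iii) feeds this into the Markov renewal theorem for the \emph{positive-drift} ladder walk $(R_{T_k})$ on $\gr$ (\cite[Theorem A.1]{Buraczewski+al:2009}), obtaining both $\int h\,\mathrm{d}\varrho=0$ (by contradiction: a non-zero integral makes one piece of the series unbounded while the left side converges) and the limit value, the factor $2/\sigma^2$ coming from Spitzer's identity $\E[\log|R_\tau|]\cdot\E[\log|R_{T_1}|]=-\sigma^2/2$. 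The payoff of this detour through ladder variables is that the only renewal-theoretic input is a transient renewal theorem on $\gr$, which is an established result.

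By contrast, your argument needs (a) a local CLT for $(\log|R_k|,\Theta_k)$ on $\R\times\gr_1$ to get the $k^{-1/2}$ asymptotics behind \eqref{eq:int h = 0}, and (b) a recurrent potential kernel on $\R\times\gr_1$ with the refined expansion $a=\sigma^{-2}|y-x|+O(1)$ in which the $O(1)$ term actually \emph{converges} as $x-y\to\pm\infty$ (otherwise the evaluation of $G\tilde h(\pm\infty)$, even with $\int\tilde h=0$, does not close). You correctly flag (b) as the principal obstacle, but you neither prove it nor point to a reference covering the case $\gr_1=\mathbb{T}$ and, more delicately, the "spiral" groups of Figure \ref{fig:snails}, where the radial and angular increments are deterministically coupled; note also that \eqref{ass:nonarithmeticity} only controls the moduli $\{|z|:z\in\gr\}$ and is not by itself an aperiodicity hypothesis for the joint walk. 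So the skeleton is sound and the constant-matching at $x\to+\infty$ (where $\tilde f\to0$) is the right way to pin down the limit, but as written the proof rests on two unestablished limit theorems that the paper's duality argument is specifically designed to avoid.
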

\begin{proof}
From our assumptions and the dominated convergence theorem, we deduce
\begin{equation}	\label{eq:harmonic_function}
g(z) = \lim_{n \to \infty} \! f(zz_n) = \lim_{n \to \infty} \! (\E[f(zz_nR_1)] + h(zz_n)) = \lim_{n \to \infty} \! \E[f(zz_nR_1)] = \E[g(zR_1)].
\end{equation}
Consequently, $(g(zR_n))_{n \in \N_0}$ is a bounded martingale and, therefore, converges a.\,s.\ as $n \to \infty$.
On the other hand, $(R_n)_{n \in \N_0}$ is neighborhood recurrent on $\gr$.
Using the continuity of $g$, we conclude that $g$ is constant.

Now we define stopping times
$\tau \defeq \inf\{n \in \N: |R_n|<1\}$, $T_0 \defeq 0$ and, recursively,
$T_n=\inf\{k \geq T_{n-1}: |R_{T_k}|\ge|R_{T_{n-1}}|\}$ for $n \in \N$.
A variant of the duality lemma \cite[Lemma 4]{Kolesko:2013} then yields
\begin{align}	\label{eq:renewal equation}
\E\bigg[\int_{\{|R_{\tau}|\leq |z|<1\}}f(z_nz)\, \varrho(\dz)\bigg]=-\sum_{k=0}^{\infty}\E\bigg[\int_{\{|z|>|R_{T_k}z_n|\}} \!\! h(z)\, \varrho(\dz)\bigg].
\end{align}
The left-hand side converges to $g(1)\E[-\log |R_{\tau}|]$ as $n \to \infty$, and so does the right-hand side.
On the other hand, observe that the bound on $h$ implies that it is directly Riemann integrable (dRi) on $\gr$,
cf.\ \cite[p.\;396]{Buraczewski+al:2009} for the precise definition.
Moreover, for any $\rho>0$, the function $h_\rho(s) \defeq \1_{(\rho,\infty)}(|s|) \int_{\{|z|>|s|\}}h(z) \, \varrho(\dz)$ is also dRi on $\gr$.
Hence, we infer
\begin{align}
\sum_{k=0}^{\infty}&\E\bigg[\int_{\{|z|>|R_{T_k}z_n|\}} \!\! h(z)\, \varrho(\dz)\bigg]	\notag	\\
&= \sum_{k=0}^{\infty}\E\bigg[\1_{\{|R_{T_k}z_n| \leq \rho \}}\int_{\{|z|>|R_{T_k}z_n|\}} \!\! h(z)\, \varrho(\dz)\bigg] + \sum_{k=0}^{\infty}\E[h_\rho(R_{T_k}z_n)].	\label{eq:Green's function decomp}
\end{align}
Now suppose that $c \defeq \int h(z)\, \varrho(\dz) \not = 0$.
Then choose $\rho > 0$ so small that
\begin{equation*}
\bigg|\int_{\{|z|>|s|\}} h(z)\, \varrho(\dz)-c\bigg| < \frac{|c|}{2}
\end{equation*}
for all $|s| \leq \rho$.
From the renewal theorem for the group $\gr$ \cite[Theorem A.1]{Buraczewski+al:2009}, we conclude that
\begin{equation*}
\sum_{k=0}^{\infty}\E[h_\rho(R_{T_k}z_n)] \to \frac1{\E[\log|R_{T_1}|]} \int h_\rho(s) \varrho(\ds)	\quad	\text{as } n \to \infty.
\end{equation*}
On the other hand, the first infinite series in \eqref{eq:Green's function decomp} is unbounded as $n \to \infty$.
This is a contradiction and, hence, $\int h(z)\, \varrho(\dz) = 0$, which is the first equality in \eqref{eq:int h = 0}.
From \cite[Proposition 1]{Kolesko:2013} we infer that the function $s\mapsto\int_{\{|z|>|s|\}} h(z) \, \varrho(\dz)$ is also dRi with
\begin{equation*}
\iint_{\{|z|>|s|\}}h(z) \, \varrho(\dz) \, \varrho(\ds)=\int h(z) \log|z| \, \varrho(\dz).
\end{equation*}
An application of the  renewal theorem for the group $\gr$ \cite[Theorem A.1]{Buraczewski+al:2009} yields
that the right-hand side of \eqref{eq:renewal equation} converges to
\begin{equation*}
\frac{-1}{\E[\log|R_{T_1}|]} \int h(z) \log|z| \, \varrho(\dz).
\end{equation*}
Finally, since $\E[\log |R_{\tau}|] \cdot \E[\log|R_{T_1}|]=-\frac{\sigma^2}{2}$ by the proof of Theorem 18.1 on p.\;196 in \cite{Spitzer:1976},
we find
\begin{equation*}
g(1) = -\frac{2}{\sigma^2}\int h(z) \log|z| \, \varrho(\dz),
\end{equation*}
which does not depend on the sequence $(z_n)_{n \in \N}$.
\end{proof}

Further, we recall an elementary but useful fact.

\begin{proposition}	\label{Prop:Buraczewski+Damek+Mentemeier:2013 Lemma 6.2}
Let $\phi \in C_{\mathrm{c}}^{2}(\Chat \setminus \{0\})$.
Then, for any $0< \varepsilon \leq 1$, there is a finite constant $C > 0$ such that for any $n \in \N$ and any
$x_1,\dots,x_n \in \C$
it holds that
\begin{equation*}	\textstyle
\bigg|\phi(\sum_{k=1}^n x_k)-\sum_{k=1}^n\phi(x_k)\bigg|\leq C\sum_{1\leq j \neq k \leq n}|x_j|^{\varepsilon}|x_k|^{\varepsilon}.
\end{equation*}
\end{proposition}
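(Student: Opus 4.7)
The plan is to reduce the estimate to the case $\varepsilon = 1$ via an interpolation argument, and to handle the $\varepsilon=1$ case by telescoping and a second-order Taylor estimate. First I would observe that since $\supp \phi$ is a compact subset of $\Chat \setminus \{0\}$, there exist $0 < r < R < \infty$ with $\supp \phi \subseteq \{z \in \C: r \leq |z| \leq R\}$. Extending $\phi$ by $\phi(0) \defeq 0$ therefore yields a function in $C_{\mathrm c}^2(\C)$ with finite $M \defeq \|\phi\|_\infty$ and $K \defeq \|D^2 \phi\|_\infty$; crucially, $\phi(0) = 0$.

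Next I would write $S_0 \defeq 0$ and $S_k \defeq x_1 + \ldots + x_k$ and perform the telescoping
\begin{equation*}
\phi\Big(\sum_{k=1}^n x_k\Big) - \sum_{k=1}^n \phi(x_k)
= \sum_{k=1}^n \big[\phi(S_{k-1} + x_k) - \phi(S_{k-1}) - \phi(x_k) + \phi(0)\big],
\end{equation*}
where the $\phi(0)$ terms, being zero, are inserted to highlight the second-order structure. Applying the identity
\begin{equation*}
\phi(y+x) - \phi(y) - \phi(x) + \phi(0) = \int_0^1\!\!\int_0^1 x^\transp D^2\phi(sy+tx)\, y \, \ds\, \dt
\end{equation*}
yields $|\phi(y+x)-\phi(y)-\phi(x)+\phi(0)| \leq K\, |x|\,|y|$, while the trivial pointwise bound gives $|\phi(y+x)-\phi(y)-\phi(x)+\phi(0)| \leq 4M$. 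Interpolating these two estimates via $\min(a,b) \leq a^\varepsilon b^{1-\varepsilon}$ produces
\begin{equation*}
|\phi(S_{k-1}+x_k)-\phi(S_{k-1})-\phi(x_k)+\phi(0)| \leq K^{\varepsilon}(4M)^{1-\varepsilon} |x_k|^\varepsilon |S_{k-1}|^\varepsilon
\end{equation*}
for every $\varepsilon \in (0,1]$.

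Finally, the subadditivity of $t \mapsto t^\varepsilon$ on $[0,\infty)$ for $\varepsilon \in (0,1]$ gives $|S_{k-1}|^\varepsilon \leq \sum_{j < k} |x_j|^\varepsilon$, and summing over $k$ yields
\begin{equation*}
\Big|\phi\Big(\sum_{k=1}^n x_k\Big) - \sum_{k=1}^n \phi(x_k)\Big|
\leq C \sum_{k=1}^n \sum_{j < k} |x_j|^\varepsilon |x_k|^\varepsilon
\leq C \sum_{1 \leq j \neq k \leq n} |x_j|^\varepsilon |x_k|^\varepsilon
\end{equation*}
with $C \defeq K^\varepsilon (4M)^{1-\varepsilon}$, which is independent of $n$ and of the $x_k$. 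There is no real obstacle in this argument; the only subtlety is the $\phi(0) = 0$ trick that promotes the naive first-order bound into the genuinely second-order symmetric bound needed to recover a product $|x_j|^\varepsilon |x_k|^\varepsilon$ rather than a single factor, together with the routine interpolation that extends the estimate from $\varepsilon = 1$ to all $\varepsilon \in (0,1]$.
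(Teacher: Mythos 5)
Your argument is essentially sound and, unlike the paper---which disposes of this proposition by citing Lemma 6.2 of Buraczewski, Damek and Mentemeier without proof---it is self-contained: the telescoping over the partial sums $S_{k-1}$, the mixed second-order identity $\phi(y+x)-\phi(y)-\phi(x)+\phi(0)=\int_0^1\!\int_0^1 x^\transp D^2\phi(sy+tx)\,y\,\ds\,\dt$, the interpolation $\min(K|x||y|,\,4M)\le K^{\varepsilon}(4M)^{1-\varepsilon}|x|^{\varepsilon}|y|^{\varepsilon}$, and the subadditivity of $t\mapsto t^{\varepsilon}$ combine correctly into the stated bound with a constant independent of $n$ and of the $x_k$.

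The one step that is wrong as written is the very first one: a compact subset of $\Chat\setminus\{0\}$ need \emph{not} be contained in an annulus $\{r\le|z|\le R\}$; compactness in $\Chat\setminus\{0\}$ only forces the support to avoid a neighborhood of $0$, and it may well contain $\infty$. This is not a vacuous concern, since the paper applies the proposition exactly to such functions (for instance, in the proof of Corollary \ref{Cor:Lambda^(1,2) expectation and covariance} one takes $\phi\ge\1_{\{|z|\ge1\}}$). Fortunately, the three facts you actually use survive without the annulus: $\phi$ vanishes identically near $0$, so setting $\phi(0)\defeq0$ yields a $C^2$ extension with $\phi(0)=0$; $M=\|\phi\|_\infty<\infty$ because the extended $\phi$ is continuous on the compact space $\Chat$; and $K=\|D^2\phi\|_\infty<\infty$ because $C^2$-smoothness at $\infty$ means $\psi(z)\defeq\phi(1/z)$ is $C^2$ at $0$, whence the chain rule gives $|D\phi(w)|=O(|w|^{-2})$ and $|D^2\phi(w)|=O(|w|^{-3})\to0$ as $|w|\to\infty$. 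With the annulus claim replaced by this observation, your proof is complete.
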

\begin{proof}[Source]
The proposition which is almost identical with \cite[Lemma 6.2]{Buraczweski+Damek+Mentemeier:2013}
follows from the proof of the cited lemma.
\end{proof}

It is routine to check using induction on $n$ that the formula
\begin{equation}	\label{eq:Ef(R_n)}
\E[f(R_n)]=\E\bigg[\sum_{|u|=n}|L(u)|^{\alpha} f(L(u))\bigg],
\end{equation}
which is assumed to hold for any bounded and measurable function $f:\C \to \R$,
defines (the distribution of) a multiplicative random walk $(R_n)_{n \in \N_0}$ on $\gr$ with i.i.d.\ steps $R_n/R_{n-1}$, $n \in \N$.
From \eqref{eq:Ef(R_n)} for $n=1$ and \eqref{eq:C1}, we infer $\E[\log |R_1|]=0$, i.e., the random walk $(\log |R_n|)_{n\in\N_0}$ 
on $\R$ has centered steps and thus is recurrent. Consequently, $(R_n)_{n \in \N_0}$ is neighborhood recurrent.
Moreover, by \eqref{eq:Ef(R_n)} and \eqref{eq:sigma^2<infty Aidekon+Shi}, we have $\E[(\log|R_1|)^2] = \sigma^2 \in (0,\infty)$.

Theorem \ref{Thm:tail Z(lambda) in Lambda^(1,2) with details} will now be proved by an application of Proposition \ref{Prop:renewal asymptotic}.
\begin{proof}[Proof of Theorem \ref{Thm:tail Z(lambda) in Lambda^(1,2) with details}]
For any $z \in \gr$, we define a finite measure $\nu_z$ on the Borel sets of $\C$ via
\begin{equation*}
\nu_z(A) = |z|^{-\alpha} \Prob(z Z \in A).
\end{equation*}
First observe that, since $\Prob[|Z|>t] \leq Ct^{-\alpha}$ by Proposition \ref{Prop:heavy tail bound Z(lambda)},
the family of measures $\{\nu_z\}_{z\in\C\setminus\{0\}}$ as a subset of the set of locally finite measures on $\Chat\setminus\{0\}$
is relatively vaguely sequentially compact.
Let $(z_n)_{n \in \N}$ be a sequence in $\gr$ satisfying $z_n \to 0$ such that $\nu_{z_n}$ converges vaguely to some measure $\nu$.

Let $\phi \in C_{\mathrm{c}}^{2}(\Chat \setminus \{0\})$.
Define $f(z)=|z|^{-\alpha}\E[\phi(zZ)]$, $h(z)=f(z)-\E[f(zR_1)]$ for $z \in \C$.
We shall show that the assumptions of Proposition \ref{Prop:renewal asymptotic} are satisfied.
From the proposition we then infer that the limit $\lim_{n \to \infty} f(z_n)$ (hence, $\nu$) does not depend on the particular choice of $(z_n)_{n \in \N}$,
which implies that $\lim_{|z|\to 0, z\in \gr} f(z)$ exists.

First, notice that $f$ is continuous by the dominated convergence theorem and, thus,
also $h$ is continuous again by the dominated convergence theorem.
Since $\phi$ is bounded, we have $|f(z)| \leq \|\phi\|_\infty |z|^{-\alpha}$ where $\|\phi\|_\infty\defeq \sup_{x\in \Chat \setminus \{0\}}|\phi(x)|<\infty$.
Since, moreover, there is some $r>0$ such that $\phi(z) = 0$ for all $|z| \leq r$,
we infer $|f(z)| \leq |z|^{-\alpha} \|\phi\|_\infty \Prob(|zZ|>r) \leq C \|\phi\|_\infty r^{-\alpha}$ for all $|z|>0$.
Hence, $|f(z)| \leq c_f \cdot (1 \wedge |z|^{-\alpha})$ for all $z \not = 0$ and some $c_f > 0$.
Next, we show that
\begin{equation}\label{eq:ineq}
|h(z)| \leq c_h (|z|^{\delta}\wedge|z|^{-\delta})
\end{equation}
for some $c_h, \delta > 0$ and all $z \not =  0$.
In order to prove that $|h(z)| \leq c |z|^{-\delta}$ for some $c>0$, it suffices to give a corresponding bound on $|\E[f(zR_1)]|$.
To this end, we first notice that for any $s \in \R$, by \eqref{eq:Ef(R_n)}, we have
\begin{align*}
\E[|R_1|^s] = \E\bigg[\sum_{|u|=1} |L(u)|^\alpha |L(u)|^s \bigg] = \E\bigg[\sum_{|u|=1} \Big|\frac{e^{-\lambda S(u)}}{m(\lambda)}\Big|^{\alpha+s} \bigg]
= \frac{m((\alpha+s)\theta)}{|m(\lambda)|^{\alpha+s}}.
\end{align*}
Thus, $\E[|R_1|^s]<\infty$ iff $m((\alpha+s)\theta)<\infty$.
By assumption $m(\theta)<\infty$ and $m(\alpha \theta) < \infty$. Since $m$ is convex, it is finite on the whole interval $[\theta,\alpha\theta]$.
Therefore, for $\delta_1 \in (0,\alpha-1)$, we have $m((\alpha-\delta_1)\theta) < \infty$ and, equivalently,
$\E[|R_1|^{-\delta_1}]<\infty$.
Consequently,
\begin{equation*}
|\E[f(zR_1)]| \leq c_f \E[1 \wedge |zR_1|^{-\alpha}] \leq c_f \E[1 \wedge |zR_1|^{-\delta_1}] \leq c_f \E[|R_1|^{-\delta_1}] \cdot |z|^{-\delta_1}
\end{equation*}
for all $|z|>0$. It remains to show that we may choose $\delta > 0$ such that also $|h(z)| \leq C |z|^{\delta}$.
To this end, recall that $\kappa \in (\alpha/2,1)$ is such that $\E[Z_1(\kappa \theta)^2]<\infty$, see \eqref{ass:L2_locally}.
For this $\kappa$, we obtain
\begin{align*}
|h(z)|
&= |f(z)-\E[f(zR_1)]| = |z|^{-\alpha} \bigg|\E[\phi(zZ)]-\E\Big[\sum_{|u|=1}\phi(zL(u)[Z]_u)\Big]\bigg|	\\
&=
|z|^{-\alpha} \bigg|\E\bigg[\phi\bigg(\sum_{|u|=1} zL(u)[Z]_u\bigg) - \sum_{|u|=1}\phi( zL(u)[Z]_u)\bigg]\bigg|	\\
&\leq
C|z|^{2\kappa-\alpha}\E\bigg[\sum_{u \neq v}|L(u)[Z]_u|^{\kappa}|L(v)[Z]_v|^{\kappa}\bigg]
\end{align*}
by Proposition \ref{Prop:Buraczewski+Damek+Mentemeier:2013 Lemma 6.2}. The expectation in the above expression can be estimated by
\begin{align}	\label{eq:h_bound}
\E\bigg[\bigg(\sum_{|u|=1}|L(u)|^{\kappa}\bigg)^{\!\!2}\,\bigg] \, [\E[|Z|^{\kappa}]]^2  < \infty
\end{align}
where the finiteness is due to \eqref{ass:L2_locally}.
We have shown that \eqref{eq:ineq} holds with $\delta=\delta_1$ for any $\delta_1\in (0, (2\kappa-\alpha)\wedge (\alpha-1))$.

Since $\nu_{z_n}$ converges vaguely to $\nu$, we have, for any $\phi\in C_{\mathrm{c}}^{2}(\Chat \setminus \{0\})$,
\begin{equation}\label{eq:inv1}
\lim_{n\to\infty} f(z_n) = \lim_{n\to\infty}	\textstyle	\int \phi\, \dnu_{z_n} = \int \phi \, \dnu.
\end{equation}
Fix any $z\in \gr$. Then
\begin{equation}	\label{eq:inv2}
\lim_{n\to\infty}f(z_nz) = |z|^{-\alpha}\lim_{n\to\infty}	\textstyle	\int \phi(zx)\,\nu_{z_n}(\dx)=|z|^{-\alpha}\int \phi(zx)\, \nu(\dx)=: g(z)
\end{equation}
because the function $t\mapsto \phi(tz)$ still belongs to $C_{\mathrm{c}}^{2}(\Chat \setminus \{0\})$. Finally, we observe that the function $t\mapsto g(t)$ is continuous on $\Chat \setminus \{0\}$.
Consequently, Proposition \ref{Prop:renewal asymptotic} applies
and shows that \eqref{eq:kappa(alpha)=0} holds and that
\begin{align*}
\lim_{\substack{|z|\to 0,\\ z\in\gr}} |z|^{-\alpha} & \E[\phi(zZ)]
= -\frac{2}{\sigma^2} \int h(z) \log |z| \, \varrho(\dz)	\\
&= -\frac{2}{\sigma^2} \int \big(f(z)\!-\!\E[f(zR_1)]\big) \log |z| \, \varrho(\dz)	\\
&= -\frac{2}{\sigma^2} \int |z|^{-\alpha} \log |z| \bigg(\E[\phi(zZ)]\!-\! \E\bigg[\sum_{|u|=1} \phi(zL(u) [Z]_u)\bigg]\bigg)\, \varrho(\dz).
\end{align*}
This proves \eqref{eq:int phi dnu}. The $(\gr, \alpha)$-invariance of $\nu$ follows from the fact that the right-hand sides of \eqref{eq:inv1} and \eqref{eq:inv2} are equal for each $z\in\gr$.
\end{proof}

\subsubsection*{The measure $\nu$ is non-zero.}
In order to show that the L\'evy measure $\nu$ is non-zero,
we adopt the analytic argument invented in \cite{Buraczewski+al:2009}.
To this end, we take a nondecreasing
function $\varphi\in C^2(\Rp)$ such that $\varphi(t)=0$ for $t<1/2$ and $\varphi(t)=1$ for $t>1$.
We further set $\varphi(0) \defeq 0$ and $\varphi(z) \defeq \varphi(|z|)$ if $z \in \C \setminus \{0\}$.
For $s \in \C$, define  $\kappa(s)$ by
\begin{align*}
\kappa(s)
&\defeq \int |z|^{-s} \bigg(\E\bigg[\varphi\bigg(\sum_{|u|=1}zL(u)[Z]_u\bigg)-\sum_{|u|=1}\varphi(zL(u)[Z]_u)\bigg]\bigg)\, \varrho(\dz)
\end{align*}
whenever the absolute value of the integrand is $\varrho$-integrable.
Using the linearity of the $\varrho$-integral, we may write this integral
as the difference of two $\varrho$-integrals.
Straightforward estimates now show that both these integrals are finite if $\Real(s) \in (1,\alpha)$
since the latter entails $m(\Real(s) \theta) < \infty$.
For $s$ in the strip $1 < \Real(s) < \alpha$, using Fubini's theorem and the invariance of the Haar measure $\varrho$, we may rewrite $\kappa(s)$ in the form
\begin{align}	\label{eq:kappa FE}
\kappa(s)
&=\int |z|^{-s}\varphi(z) \, \varrho(\dz)\bigg(\E\bigg[ |Z|^{s}-\sum_{|u|=1}|L(u)[Z]_u|^{s}\bigg]\bigg)	\notag	\\
&=\int |z|^{-s}\varphi(z) \, \varrho(\dz)\cdot \Big(1-\frac{m(s \theta)}{|m(\lambda)|^s}\Big) \cdot \E[|Z|^{s}].
\end{align}
\begin{lemma}	\label{Lem:Goldie++}
For any $s > 1$ there exists a finite constant $C_s$ such that, for any $x,y \in \C$, we have
\begin{equation*}
\int |z|^{-s}|\varphi(zy)-\varphi(zx)|\, \varrho(\dz) \leq C_s ||y|^{s}-|x|^{s}|
\end{equation*}
and $\sup_{s \in I} C_s < \infty$ for every closed interval $I \subset (1,\infty)$.
\end{lemma}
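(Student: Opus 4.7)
The plan is to exploit the fact that $\varphi$ depends on its argument only through its modulus, which reduces the integral on $\gr$ to a one-dimensional integral on $\R_>$. Under the isomorphism $T:\gr_1\times\R_>\to\gr$, $T(u,t)=u\gamma_t$, the Haar measure $\varrho$ decomposes as $\varrho_1\otimes\frac{\dt}{t}$, where $\varrho_1$ is a probability measure on $\gr_1$. For $z=u\gamma_t\in\gr$ we have $|z|=t$, so $\varphi(zy)=\varphi(t|y|)$ and $\varphi(zx)=\varphi(t|x|)$. Consequently,
\begin{equation*}
\int_{\gr}|z|^{-s}|\varphi(zy)-\varphi(zx)|\,\varrho(\dz)=\int_0^\infty t^{-s-1}|\varphi(t|y|)-\varphi(t|x|)|\,\dt.
\end{equation*}

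Assume without loss of generality that $|y|\geq|x|\geq0$ (the case $y=x=0$ being trivial). Since $\varphi$ is nondecreasing and $t|y|\geq t|x|$ for all $t>0$, we may drop the absolute value and split the integral. In the integral $\int_0^\infty t^{-s-1}\varphi(t|y|)\,\dt$ substitute $u=t|y|$ (this uses $|y|>0$; if $|y|=0$ everything is zero) and in $\int_0^\infty t^{-s-1}\varphi(t|x|)\,\dt$ substitute $u=t|x|$ (or observe that the integral vanishes if $|x|=0$). In either case both integrals equal $|y|^{s}\int_0^\infty u^{-s-1}\varphi(u)\,\du$ and $|x|^{s}\int_0^\infty u^{-s-1}\varphi(u)\,\du$, respectively, so the whole quantity equals $(|y|^s-|x|^s)\,C_s$, where
\begin{equation*}
C_s\defeq\int_0^\infty u^{-s-1}\varphi(u)\,\du.
\end{equation*}

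It remains to check that $C_s<\infty$ and that $\sup_{s\in I}C_s<\infty$ for every closed $I\subset(1,\infty)$. Because $\varphi$ vanishes on $[0,1/2)$ and equals $1$ on $[1,\infty)$, we split
\begin{equation*}
C_s=\int_{1/2}^{1}u^{-s-1}\varphi(u)\,\du+\int_1^\infty u^{-s-1}\,\du\leq \int_{1/2}^{\infty}u^{-s-1}\,\du=\frac{2^s}{s},
\end{equation*}
which is finite and continuous (hence bounded) in $s$ on any closed subinterval of $(0,\infty)$. There is no real obstacle in this argument; the only point requiring a little care is that $\varphi$ is defined on $\C$ by $\varphi(z)=\varphi(|z|)$ and the Haar-measure reduction must match the stated normalization $\varrho(\{1\leq|z|<e\})=1$, which is exactly what makes the radial part of $\varrho$ equal to $\dt/t$ (without any extra multiplicative constant).
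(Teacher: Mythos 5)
Your proof is correct, and it takes a genuinely different route from the paper's. The paper argues by a case analysis: after assuming $|x|\leq|y|$, it observes that the integrand is supported where $|zy|>1/2$ and $|zx|<1$, splits this region into three pieces, bounds the first two by integrating $|z|^{-s}$ over annuli, and handles the middle piece via $\|\varphi'\|_\infty$ together with the elementary inequality $(t-1)(2^{s-1}-t^{s-1})\leq 2^{s-1}(t^s-1)$; the resulting constant contains a factor $1/(s-1)$, which is why the lemma only asserts boundedness of $C_s$ on closed subintervals of $(1,\infty)$. You instead exploit that $\varphi$ is radial and that the radial part of $\varrho$ is exactly $\dt/t$ with $\varrho_1$ a probability measure, reduce to a one-dimensional integral, and then use the scaling substitution $u=t|y|$ to obtain the \emph{identity}
\begin{equation*}
\int |z|^{-s}|\varphi(zy)-\varphi(zx)|\,\varrho(\dz)=\big||y|^s-|x|^s\big|\int_0^\infty u^{-s-1}\varphi(u)\,\du ,
\end{equation*}
with the optimal constant $C_s=\int_0^\infty u^{-s-1}\varphi(u)\,\du\leq 2^s/s$. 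The small points that need checking all do check out: the splitting into two separately convergent integrals is legitimate because $\varphi$ vanishes on $[0,1/2)$ (so there is no singularity at $0$) and $s>0$ (so the tail converges); monotonicity of $\varphi$ disposes of the absolute value; and the degenerate cases $|x|=0$ or $|y|=0$ are handled by $\varphi(0)=0$. Your approach buys a sharper, $\varphi'$-free constant that is bounded on closed subintervals of $(0,\infty)$, not just of $(1,\infty)$; what it uses that the paper's argument does not is the precise product form of $\varrho$, though that form is explicitly provided in the paper, so nothing is missing.
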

\begin{proof}
Without loss of generality we assume that $|x|\leq |y|$.
Then $|\varphi(zy)-\varphi(zx)|$ may only be positive when $|zy|>1/2$ and $|zx|<1$.
We shall consider the three cases $|zx|<1/2<|zy|<1$, $|zx|<1<|zy|$ and $1/2<|zx|<|zy|<1$ separately. In the second case, we conclude that the integral in focus does not exceed
\begin{align*}
\int_{\{|x|<\frac1{|z|}<|y|\}}|z|^{-s}|\varphi(zy)-\varphi(zx)|\, \varrho(\dz)
&\leq	\int_{\{|x|<\frac1{|z|}<|y|\}}|z|^{-s}\, \varrho(\dz)
=	\frac1s (|y|^{s}-|x|^{s}).
\end{align*}
Analogously, in the first case, we obtain the upper bound
\begin{align*}
\int_{\{|x|<\frac1{2|z|}<|y|\}}|z|^{-s}|\varphi(zy)-\varphi(zx)|\, \varrho(\dz)
\leq	\frac1{s 2^s} (|y|^{s}-|x|^{s}).
\end{align*}
It remains to get the bound in the third case.
Since the derivative of $\varphi$ (as a function on $\Rp$) is bounded so that $\|\varphi'\|_\infty=\sup_{x>0}\varphi'(x)<\infty$ we have the upper bound
\begin{multline*}
\underset{\{|y|<\frac{1}{|z|}<2|x|\}}{\int} \!\!\!\!\!\!\!\!\! |z|^{-s}|\varphi(zx)-\varphi(zy)|\, \varrho(\dz)
\leq		\|\varphi'\|_{\infty} \!\!\! \underset{\{|y|<\frac{1}{|z|}<2|x|\}}{\int} \!\!\!\!\!\!\!\!\! |z|^{-s}(|zy|-|zx|)\, \varrho(\dz)	\\
=		\frac{\|\varphi'\|_{\infty}}{s-1} (|y|-|x|)(|2x|^{s-1}-|y|^{s-1})
\leq 	\frac{\|\varphi'\|_{\infty}}{s-1} 2^{s-1} (|y|^s-|x|^s).
\end{multline*}
The latter follows from the elementary inequality
\begin{equation*}
(t-1)(2^{s-1}-t^{s-1})\leq2^{s-1}(t-1)\leq2^{s-1}(t^{s}-1),
\end{equation*}
that we use for $t=\frac{|y|}{|x|}\in(1,2)$.
The claim concerning the local boundedness of $s\mapsto C_s$ is now obvious.
\end{proof}

\begin{lemma}	\label{Lem:analytic continuation of kappa}
Suppose that \eqref{ass:Lgamma} and \eqref{ass:L2_locally} hold.
Then $\kappa$ is well-defined and holomorphic on the strip $1<\Real(s)<\alpha+\epsilon$ for some $\epsilon>0$.
\end{lemma}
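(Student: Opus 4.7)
My plan is to establish absolute $\varrho$-integrability of the integrand defining $\kappa(s)$ throughout a strip $1<\Real(s)<\alpha+\epsilon$ for a suitable $\epsilon>0$, and then conclude holomorphicity by a standard Morera--Fubini argument. Write $F(z):=\E[\varphi(\sum_{|u|=1}zL(u)[Z]_u)-\sum_{|u|=1}\varphi(zL(u)[Z]_u)]$ for the inner expectation, which is the quantity to be controlled on $\gr$.

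First, near $z=0$, I would apply Proposition \ref{Prop:Buraczewski+Damek+Mentemeier:2013 Lemma 6.2}: the function $\varphi$ lies in $C_{\mathrm{c}}^{2}(\Chat\setminus\{0\})$ since it vanishes in a neighborhood of $0$ and equals $1$ in a neighborhood of $\infty$, with smooth interpolation. Taking the exponent $\kappa\in(\alpha/2,1)$ supplied by \eqref{ass:L2_locally} yields
\[
|F(z)|\le C|z|^{2\kappa}\,\E\!\left[\sum_{u\ne v}|L(u)[Z]_u|^{\kappa}|L(v)[Z]_v|^{\kappa}\right].
\]
Conditioning on $\F_1$ and using the independence of the $[Z]_u$ together with $\E[|Z|^{\kappa}]<\infty$ (from $\kappa<\alpha$ and Proposition \ref{Prop:heavy tail bound Z(lambda)}) and $\E[Z_{1}(\kappa\theta)^{2}]<\infty$ from \eqref{ass:L2_locally}, the bracketed expectation is bounded uniformly in $z$. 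Setting $\epsilon:=2\kappa-\alpha>0$, this gives $\int_{\{|z|\le 1\}}|z|^{-\Real(s)}|F(z)|\,\varrho(\dz)<\infty$ exactly when $\Real(s)<\alpha+\epsilon$.

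On the tail region $\{|z|>1\}$ the Proposition \ref{Prop:Buraczewski+Damek+Mentemeier:2013 Lemma 6.2} bound is useless because $|z|^{2\kappa}$ grows. I would instead decompose $F(z)=\E[\varphi(zZ)]-\E[\sum_u G_{\varphi}(zL(u))]$ with $G_{\varphi}(y):=\E[\varphi(yZ)]$, and use the radial change-of-variables identity
\[
\int_{\{|z|>1\}}|z|^{-s}\varphi(zy)\,\varrho(\dz)=|y|^{s}\int_{\{|w|>|y|\}}|w|^{-s}\varphi(w)\,\varrho(\dw),
\]
valid since the integrand depends on $z$ only through $|z|$. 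An elementary computation using $\varphi(w)=1$ for $|w|\ge 1$ shows the right-hand side is bounded by $C_{s}(|y|^{\Real(s)}\wedge 1)$ uniformly in $y$ for $\Real(s)>0$. Fubini then dominates the tail integral by a constant multiple of
\[
\E[|Z|^{\Real(s)}\wedge 1]+\E\Bigl[\sum_u (|L(u)[Z]_u|^{\Real(s)}\wedge 1)\Bigr],
\]
each summand finite without any assumption on $\E[N]$: split $(\cdot\wedge 1)=(\cdot)^{p}\1_{\{\cdot<1\}}+\1_{\{\cdot\ge 1\}}$ for some $p\in(1,\alpha)$, apply $\E[|Z|^{p}]<\infty$ from Proposition \ref{Prop:heavy tail bound Z(lambda)} and $\E[\sum_u|L(u)|^{p}]=m(p\theta)/|m(\lambda)|^{p}<\infty$ (finite because $p<\gamma$ and $m(\gamma\theta)<\infty$ by \eqref{ass:Lgamma}), and control the residual count $\E[\sum_u\1_{\{|L(u)[Z]_u|\ge 1\}}]$ by the tail bound on $Z$ together with $\E[\sum_u|L(u)|^{\alpha}]=1$.

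Combining the two regimes yields absolute $\varrho$-integrability of $|z|^{-s}F(z)$ on the strip $1<\Real(s)<\alpha+\epsilon$. Holomorphicity then follows by Morera's theorem: since $s\mapsto|z|^{-s}=e^{-s\log|z|}$ is entire, for any closed triangular contour $T$ in the strip Fubini (justified by the absolute integrability just obtained) and Cauchy's theorem give
\[
\oint_{T}\kappa(s)\,\mathrm{d}s=\int_{\gr}F(z)\Bigl(\oint_{T}|z|^{-s}\,\mathrm{d}s\Bigr)\,\varrho(\dz)=0.
\]
The main technical obstacle is the tail estimate: the product bound from Proposition \ref{Prop:Buraczewski+Damek+Mentemeier:2013 Lemma 6.2} is only useful near $z=0$, so handling $\{|z|>1\}$ requires the separate Haar-invariant change of variables together with the moment assumption \eqref{ass:Lgamma} with $\gamma>\alpha$, which is precisely what allows the estimate to extend past $\Real(s)=\alpha$.
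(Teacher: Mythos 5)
Your proof is correct, but it is not the route the paper takes. The paper's argument never splits the domain of integration over $\gr$: it inserts the comparison term $\varphi(\max_{|u|=1}|zL(u)[Z]_u|)$ between the two pieces of the integrand, controls the resulting differences via the weighted estimate of Lemma \ref{Lem:Goldie++} together with the moment bound \eqref{eq:10}, and via the inclusion--exclusion/Poissonization chain \eqref{eq:9}; this relies essentially only on \eqref{ass:Lgamma} and yields the wider strip $1<\Real(s)<\gamma$ (together with the auxiliary estimates \eqref{eq:10}--\eqref{eq:12}). You instead split $\gr$ into $\{|z|\le 1\}$ and $\{|z|>1\}$: near the origin you reuse the second-order cancellation bound of Proposition \ref{Prop:Buraczewski+Damek+Mentemeier:2013 Lemma 6.2} with exponent $\kappa$ (exactly the device the paper uses in the proof of Theorem \ref{Thm:tail Z(lambda) in Lambda^(1,2) with details} to get $|h(z)|\le C|z|^{2\kappa-\alpha}$), which is where \eqref{ass:L2_locally} enters and which fixes $\epsilon=2\kappa-\alpha$; on the tail you bound the two terms of the integrand separately by a radial Fubini computation using the tail bound of Proposition \ref{Prop:heavy tail bound Z(lambda)} and $\E[\sum_{|u|=1}|L(u)|^\alpha]=1$. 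Both routes are sound: yours is more elementary on $\{|z|>1\}$ and suffices for the lemma as stated, at the cost of the narrower strip $1<\Real(s)<2\kappa$ and of invoking \eqref{ass:L2_locally} (which the paper's proof of this particular lemma does not actually need). Two small points you should make explicit: Morera's theorem also requires continuity of $\kappa$ on the strip, which follows from your locally uniform integrable bounds by dominated convergence; and ``exactly when'' in your near-zero step should be ``whenever'' --- your estimate is only an upper bound, which is all that is needed.
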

\begin{proof}
Let $s \in (1,\gamma)$ where $\gamma \in (\alpha,2]$ is as in \eqref{ass:Lgamma}. First, we show that
\begin{align}
&\E\bigg[ \bigg|\bigg|\sum_{|u|=1}L(u)[Z]_u\bigg|^{s}-\max_{|u|=1}|L(u)[Z]_u|^{s}\bigg|\bigg]<\infty,	\label{eq:10}	\\
&\E\bigg[\sum_{|u|=1}|L(u)[Z]_u|^{s}-\max_{|u|=1} |L(u)[Z]_u|^{s}\bigg]<\infty,	\quad	  \text{and}	\label{eq:11}	\\
&\E\bigg[\int |z|^{-s}\bigg(\sum_{|u|=1} h(|zL(u)[Z]_u|)-h({\textstyle \max_{|u|=1}|zL(u)[Z]_u|})\bigg)\, \varrho(\dz)\bigg]<\infty	\label{eq:12}
\end{align}
where $h=\1_{(1,\infty)}$ or $h=\varphi$ for $\varphi$ defined in the paragraph preceding Lemma \ref{Lem:Goldie++}.
In order to prove \eqref{eq:10}, we first observe that
\begin{align*}
&\E\bigg[\bigg|\bigg|\sum_{|u|=1} L(u)[Z]_u \bigg|^{s}-\bigg(\sum_{|u|=1}|L(u)[Z]_u|^2\bigg)^{\!\!s/2}\bigg|\bigg]	\\
&=\E\bigg[\bigg|\bigg(\sum_{|u|=1} \! |L(u)[Z]_u|^2+\!\!\!\! \sum_{\substack{|u|=|v|=1\\ u\neq v}} \!\!\!\!\! \Real(L(u)[Z]_u\overline{L(v)[Z]_v)}\bigg)^{\!\!s/2} \!\!\!\! -\bigg(\sum_{|u|=1} \! |L(u)[Z]_u|^2\bigg)^{\!\!s/2}\bigg|\bigg]	\\
&\leq \E\bigg[\bigg(\sum_{\substack{|u|=|v|=1\\ u\neq v}} |L(u)[Z]_uL(v)[Z]_v|\bigg)^{\!\!s/2}\bigg]	\\
&= \E\bigg[\E_1\bigg[\bigg(\sum_{\substack{|u|=|v|=1\\ u\neq v}} |L(u)[Z]_uL(v)[Z]_v|\bigg)^{\!\!s/2}\bigg]\bigg]  \\
&\leq (\E[|Z|])^s \cdot \E\bigg[\bigg(\sum_{\substack{|u|=|v|=1\\ u\neq v}}|L(u)||L(v)|\bigg)^{\!\!s/2}\bigg]	\\
&\leq (\E[|Z|])^s \cdot \E\bigg[\bigg(\sum_{|u|=1}|L(u)|\bigg)^{\!\!s}\bigg]<\infty,
\end{align*}
where we used the subadditivity on $[0,\infty)$ of the function $t \mapsto t^{s/2}$ for the first inequality,
Jensen's inequality for the second, and \eqref{ass:Lgamma} to conclude the finiteness.
This in combination with the inequality
\begin{align*}
0 \leq \bigg(\sum_{|u|=1}|L(u)[Z]_u|^2\bigg)^{\!\!s/2}\!\!\!-\max_{|u|=1} |L(u)[Z]_u|^{s}\leq \sum_{|u|=1}|L(u)[Z]_u|^{s}-\max_{|u|=1} |L(u)[Z]_u|^{s},
\end{align*}
which follows from the aforementioned subadditivity, shows that \eqref{eq:10} is a consequence of \eqref{eq:11}.

For $h=\1_{(1,\infty)}$ or $h=\varphi$ and positive $x_j$, we have
\begin{align*}
(1-h(\max_j x_j))
&\leq		\prod_{j}(1-h(x_j)) +\bigg(\sum_{j \neq k} \1_{\{x_j>1/2,\ x_k>1/2\}}\bigg) \wedge 1.
\end{align*}
Further, there exists some finite $C \geq 1$ such that $x-1+e^{-x} \leq C (x \wedge x^2)$ for all $x \geq 0$
and $\Prob(|Z|>t) \leq C t^{-\alpha}$, $\Prob(|Z|>t) \leq C t^{-1}$ for all $t>0$ (the latter follows from Markov's inequality).
Using these facts, we infer
\begin{align}
\int & |z|^{-s} \bigg(\E_1 \bigg[\sum_{|u|=1} h(|zL(u)[Z]_u|)-h(\max_{|u|=1}|zL(u)[Z]_u|)\bigg]\bigg)\, \varrho(\dz)	\notag	\\
&\leq \int |z|^{-s}\bigg(\sum_{|u|=1} \E_1[h(|zL(u)[Z]_u|)]-1+e^{-\sum_{|u|=1}\E_1[h(|zL(u)[Z]_u|)]}\bigg)\, \varrho(\dz)	\notag	\\
&\hphantom{\leq}\
+\int |z|^{-s} \bigg(\bigg(\sum_{\substack{|u|=|v|=1\\ u\neq v}}\Prob_1(|zL(u)[Z]_u|>1/2,\ |zL(v)[Z]_v|>1/2)\bigg)\wedge 1 \bigg) \, \varrho(\dz)	\notag	\\
&\leq C\int |z|^{-s}\bigg(\sum_{|u|=1}\E_1[h(|zL(u)[Z]_u|)]\bigg)\wedge\bigg(\sum_{|u|=1}\E_1[h(|zL(u)[Z]_u|)]\bigg)^{\!\!2} \, \varrho(\dz)	\notag	\\
&\hphantom{\leq}\
+4^\alpha C^2 \int |z|^{-s}\bigg(\bigg(\sum_{\substack{|u|=|v|=1\\ u\neq v}}|zL(u)|^{\alpha}|zL(v)|^{\alpha}\bigg)\wedge1 \bigg)\, \varrho(\dz)	\notag	\\
&\leq 4 C^3 \int |z|^{-s}\bigg(\sum_{|u|=1}|L(u)| |z| \bigg)\wedge\bigg(\sum_{|u|=1}|L(u)||z|\bigg)^{\!\!2}\,\varrho(\dz)	\notag	\\
&\hphantom{\leq}\
+4^\alpha C^2\bigg(\sum_{\substack{|u|=|v|=1\\ u\neq v}}|L(u)|^{\alpha}|L(v)|^{\alpha}\bigg)^{\!\!s/2\alpha}\int |z|^{-s}(|z|^{2\alpha}\wedge 1)\,
\varrho(\dz)	\notag \\
&\leq \bigg(\!4 C^3 \! \int |z|^{-s} (|z| \wedge |z|^2) \,\varrho(\dz) + 4^\alpha C^2 \! \int |z|^{-s}(|z|^{2\alpha}\wedge 1)\, \varrho(\dz)\!\bigg) \notag	\\
&\hphantom{\leq}~\cdot\bigg(\sum_{|u=1|} |L(u)| \bigg)^{\!\!s}
\label{eq:9}
\end{align}
where in the last step we have used that $(\sum_{|u|=1}|L(u)|^{\alpha})^{s/\alpha} \leq (\sum_{|u|=1}|L(u)|)^{s}$.
Notice that the last two $\varrho$-integrals in \eqref{eq:9} are finite since $1<s<2$.
Assumption \eqref{ass:Lgamma} entails $\E[(\sum_{|u|=1}|L(u)|)^{s}] <\infty$ which proves \eqref{eq:12}.
Choosing $h=\1_{(1,\infty)}$ and taking the expectation in \eqref{eq:9}, we conclude that \eqref{eq:11} holds.
In particular, for $s\in\C$ with $1<s_1=\Real(s)<\gamma$, we infer
\begin{align*}
 \int& \bigg||z|^{-s} \bigg(\E[\varphi(zZ)]-\sum_{|u|=1}\E[\varphi(zL(u)[Z]_u)]\bigg)\bigg|\, \varrho(\dz)\\
 &=\int |z|^{-s_1} \bigg|\E[\varphi(zZ)]-\sum_{|u|=1}\E[\varphi(z L(u)[Z]_u)]\bigg|\, \varrho(\dz)\\
 &\leq \int |z|^{-s_1} \bigg|\E\bigg[\varphi\bigg(z \sum_{|u|=1}L(u)[Z]_u\bigg)\bigg]-\E\Big[\varphi(\max_{|u|=1}|z L(u)[Z]_u|)\Big]\bigg|\, \varrho(\dz)\\
 &\quad+ \int |z|^{-s_1} \bigg(\sum_{|u|=1}\E[\varphi(z L(u)[Z]_u)]-\E\Big[\varphi(\max_{|u|=1}|z L(u)[Z]_u|)\Big]\bigg)\, \varrho(\dz)\\
 &\leq C_{s_1} \cdot \E\bigg[\bigg|\bigg|\sum_{|u|=1}L(u)[Z]_u\bigg|^{s_1}-\max_{|u|=1}|L(u)[Z]_u|^{s_1}\bigg|\bigg]	\\
 &\quad+\int |z|^{-s_1} \bigg(\sum_{|u|=1}\E[\varphi(z L(u)[Z]_u)]-\E\Big[\varphi(\max_{|u|=1} |z L(u)[Z]_u|)\Big]\bigg)\, \varrho(\dz)<\infty
\end{align*}
by Lemma \ref{Lem:Goldie++}, \eqref{eq:10} and \eqref{eq:12}. Therefore, $\kappa$ is well defined on the strip $1<\Real(s) < \gamma$.
Moreover, for any closed triangle $\Delta$ in $1<\Real(s)<\gamma$, by the above calculation,
we can apply Fubini's theorem and the holomorphy of $s \mapsto |z|^{-s}$ in $\Real(s)>1$ to conclude that
\begin{align*}
&\int_{\partial \Delta} \kappa(s) \, \ds	\\
&= \iint_{\partial \Delta}|z|^{s} \, \ds \bigg(\E\bigg[\varphi\bigg(\bigg|\sum_{|u|=1}zL(u)[Z]_u\bigg|\bigg)-\sum_{|u|=1}\varphi(|zL(u)[Z]_u|)\bigg]\bigg)\, \varrho(\dz)=0,
\end{align*}
which implies that $\kappa$ is holomorphic on the strip $1<\Real(s)<\gamma$ by Morera's theorem.
\end{proof}

\begin{theorem}	\label{Thm:positivity of nu}
If \eqref{ass:Lgamma} and \eqref{ass:L2_locally} hold, then the L\'evy measure $\nu$ is non-zero.
\end{theorem}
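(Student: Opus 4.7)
The strategy is to leverage the analytic structure of $\kappa$ to reduce the positivity of $\nu$ to a question about the finiteness of $\E[|Z|^\alpha]$.

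As a first step, I would apply the representation \eqref{eq:int phi dnu} from Theorem \ref{Thm:tail Z(lambda) in Lambda^(1,2) with details} to the specific test function $\phi = \varphi$ (the smooth bump introduced before the definition of $\kappa$). Since $\kappa(\alpha) = 0$ by \eqref{eq:kappa(alpha)=0} and $\kappa$ is holomorphic in a neighbourhood of $\alpha$ by Lemma \ref{Lem:analytic continuation of kappa}, the $\varrho$-integral in \eqref{eq:int phi dnu} is recognised as $-\tfrac{d}{ds}\kappa(s)|_{s=\alpha}$ after differentiation under the integral, which is justified by the very estimates carried out in the proof of Lemma \ref{Lem:analytic continuation of kappa} (they give a dominating function uniform for $s$ in a real neighbourhood of $\alpha$). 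Consequently,
\begin{equation*}
\int \varphi \, \dnu \;=\; \tfrac{2}{\sigma^2}\,\kappa'(\alpha).
\end{equation*}
Because $\varphi \geq 0$ is not identically zero, proving that $\nu$ is non-zero reduces to showing $\kappa'(\alpha) \neq 0$.

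Next I would exploit the factorisation \eqref{eq:kappa FE}: for real $s \in (1,\alpha)$,
\begin{equation*}
\kappa(s) = I(s)\,A(s)\,\Phi(s),
\qquad
I(s) \defeq \int |z|^{-s}\varphi(z)\,\varrho(\dz),\quad A(s) \defeq 1-\tfrac{m(s\theta)}{|m(\lambda)|^s},\quad \Phi(s) \defeq \E[|Z|^s].
\end{equation*}
Here $I$ is smooth and positive on a real neighbourhood of $\alpha$ (since $\varphi \geq 0, \varphi \not\equiv 0$), $\Phi$ is finite and increasing on $(1,\alpha)$ (using \eqref{ass:L2_locally} together with $\alpha < 2$ and the tail bound from Proposition \ref{Prop:heavy tail bound Z(lambda)}), while \eqref{eq:C1} gives $A(\alpha) = 0$ and $A'(\alpha) = 0$. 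The strict log-convexity of $s \mapsto m(s\theta)/|m(\lambda)|^s$, which reduces to positivity of the variance of the tilted displacement (and is non-degenerate by the non-arithmeticity assumption \eqref{ass:nonarithmeticity}), yields $A''(\alpha) < 0$, so $A$ has a zero of exact order two at $\alpha$. In view of the factorisation and the holomorphy of $\kappa$ at $\alpha$, it follows that
\begin{equation*}
\kappa'(\alpha) \neq 0 \quad \Longleftrightarrow \quad \lim_{s\uparrow\alpha}\Phi(s) \;=\; \E[|Z|^\alpha] \;=\; \infty.
\end{equation*}

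The remaining step, which I expect to be the main obstacle, is to show that $\E[|Z|^\alpha] = \infty$. Here I would argue by contradiction, adapting the analytic scheme of \cite{Buraczewski+al:2009} to the complex setting. Assuming $\E[|Z|^\alpha] < \infty$, the factorisation forces $\kappa$ to have a zero of order at least two at $\alpha$, hence the identity $\Phi(s) = \kappa(s)/(I(s)A(s))$ furnishes a holomorphic extension of the Mellin transform $s \mapsto \E[|Z|^s]$ across the line $\Real(s) = \alpha$. By standard Mellin-analytic facts, this extension is incompatible with $\E[|Z|^s]$ being a genuine Mellin transform unless the tail of $|Z|$ decays faster than any power $t^{-\alpha-\epsilon}$, $\epsilon > 0$. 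Combining this with the smoothing identity $Z = \sum_{|u|=1}L(u)[Z]_u$ iterated $n$ times and applying the Topchi\u{\i}--Vatutin-type bound of Lemma \ref{Lem:TV-ineq complex}, together with the fact that $\sum_{|u|=n}|L(u)|^\alpha$ is (by \eqref{eq:asymptotics sum |L(u)|^alpha}) not $o(n^{-1/2})$, would contradict such superpolynomial tail decay. This non-triviality argument is precisely where the non-arithmeticity \eqref{ass:nonarithmeticity} is decisive, as it ensures that the group $\gr$ acts transitively enough on the support of $\Law(Z)$ to prevent accidental cancellation of the $(\gr,\alpha)$-invariant part of the tail.
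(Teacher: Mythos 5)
Your reduction is exactly the paper's: via \eqref{eq:int phi dnu} with $\phi=\varphi$ one gets $\int\varphi\,\dnu=\tfrac{2}{\sigma^2}\kappa'(\alpha)$, and by the factorisation \eqref{eq:kappa FE}, the holomorphy of $\kappa$ near $\alpha$ (Lemma \ref{Lem:analytic continuation of kappa}), $\kappa(\alpha)=0$, and the exact double zero of $s\mapsto 1-m(s\theta)/|m(\lambda)|^s$ at $\alpha$, the statement $\nu=0$ is equivalent to $F(s)=\E[|Z|^s]$ extending holomorphically past $\alpha$, which by Landau's theorem would give $\E[|Z|^{\alpha+\delta}]<\infty$ for some $\delta>0$. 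Up to this point your plan matches the paper (your phrase ``decays faster than any power'' overstates what Landau gives, but the correct conclusion $\E[|Z|^{\alpha+\delta}]<\infty$ is what you need and what the paper uses).

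The genuine gap is the final contradiction. You propose to refute $\E[|Z|^{\alpha+\delta}]<\infty$ by iterating the smoothing identity, applying Lemma \ref{Lem:TV-ineq complex}, and using that $\sqrt{n}\sum_{|u|=n}|L(u)|^\alpha$ does not vanish. This cannot work as stated: the Topchi\u{\i}--Vatutin inequality bounds moments of $Z-1$ \emph{from above} by moments of the increments, whereas a contradiction requires a \emph{lower} bound on the tail of $|Z|$ (or of some functional of the weights), and the complex, centered summands $L(u)([Z]_u-1)$ may cancel, so no single large term forces $|Z|$ to be large. The paper supplies the two missing ingredients: (i) it exhausts the tree by finite subtrees $\I_n$, realizes $Z-1$ as the limit of a martingale $M_n$ with increments indexed by single vertices, and applies the complex Burkholder inequality to convert $\E[|Z|^{\alpha+\delta}]<\infty$ into $\E[(\sum_{v\in\Gen}|L(v)|^2|[Z_1]_v-1|^2)^{(\alpha+\delta)/2}]<\infty$ --- the quadratic variation has nonnegative terms, so cancellation is no longer an issue; and (ii) it invokes Madaule's lower bound $\Prob(\sup_{u\in\Gen}|L(u)|>t)\ge c\,t^{-\alpha}$ (\cite[Theorem 1.5]{Madaule:2016}) together with the strong Markov branching property along the optional line $\mathcal N_t$ to get $\Prob(\sum_{v\in\Gen}|L(v)|^2|[Z_1]_v-1|^2>t_0^2t^2)\ge \tfrac{c}{2}t^{-\alpha}$, which contradicts the moment bound. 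Neither the Burkholder step nor Madaule's tail lower bound appears in your sketch, and the Seneta--Heyde asymptotics you cite do not substitute for them; also, the non-arithmeticity \eqref{ass:nonarithmeticity} plays no role in this final contradiction (it is used earlier, in the renewal argument constructing $\nu$).
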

\begin{proof}
For $s\in\C$ with $0 < \Real(s)<\alpha$, we define the holomorphic function $F(s)\defeq \E[|Z|^s]$.
The functions $\kappa$ and $F$ are related by the identity, valid for $1 < \Real(s)<\alpha$,
\begin{align}	\label{eq:F(s)}
F(s) = \frac{\kappa(s)}{1-\frac{m(s \theta)}{|m(\lambda)|^s}} \cdot \bigg(\int |z|^{-s}\varphi(|z|) \, \varrho(\dz)\bigg)^{-1},
\end{align}
which is a direct consequence of \eqref{eq:kappa FE}.
According to Lemma \ref{Lem:analytic continuation of kappa}, $\kappa$ possesses a holomorphic extension to some neighborhood of $\alpha$.
Assuming that $\nu = 0$ we show that $F$ has such an extension as well. The latter statement will lead to a contradiction.
Indeed, if $\nu=0$, then $\int \varphi(|z|) \, \nu(\dz) = 0$, which together with \eqref{eq:int phi dnu} shows that $\kappa'(\alpha)=0$.
Since also $\kappa(\alpha)=0$ by \eqref{eq:kappa(alpha)=0},
we infer that the numerator in \eqref{eq:F(s)} has a $0$ of at least second order at $\alpha$, while
the denominator has a zero of at most second order at $\alpha$ by virtue of
\begin{equation*}
\frac{\mathrm{d}^2}{\ds^2} \frac{m(s\theta)}{|m(\lambda)|^s}
= \E \bigg[\sum_{|u|=1} \log^2 \Big(\frac{e^{-\theta S(u)}}{|m(\lambda)|}\Big) \Big(\frac{e^{-\theta S(u)}}{|m(\lambda)|}\Big)^s \bigg] > 0
\end{equation*}
for all $1 < s < \alpha+\epsilon$, in particular for $s=\alpha$.
Hence, $F$ does possess a holomorphic extension to some neighborhood of $\alpha$.
We conclude from Landau's theorem (cf. \cite[Theorems 5a and 5b in Chap. II]{Widder:1946}) that
\begin{equation}	\label{eq:higher moment}
\E[|Z|^{\alpha+\delta}]<\infty,
\end{equation}
for some $\delta>0$.

By $\I_n$ let us denote an increasing family of subtrees of the Harris-Ulam tree $\I$ such that $\I_1=\{\varnothing\}$,
$|\I_n|=n$ and $\bigcup_{n\in\N} \I_n=\I$.
By a classical diagonal argument such a family exists.
Write $u_n$ for the unique vertex from the set $\I_n\setminus \I_{n-1}$, $n \in \N$.
Next, we define
\begin{align*}
M_n \defeq \sum_{k=1}^n \! L(u_k)([Z_1]_{u_k}\!-\!1)
\text{ and }
M_n^{(t)}=\sum_{k=1}^{n} \! L(u_k)\1_{\{L(u_k|_j) \leq t\,\text{for}\,\text{all}\,j \leq |u_k|\}}([Z_1]_{u_k}\!-\!1),
\end{align*}
and observe that they constitute martingales with respect to the filtration $(\H_n)_{n \in \N_0}$
where $\H_n \defeq \sigma(\cZ(u_k): k=1,\ldots,n)$ for $n \in \N_0$.
We claim that $M_n$ converges a.\,s.\ to $Z-1$.
To see this, recall from the proof of \cite[Theorem 2.1]{Kolesko+Meiners:2017} that on the set $\{\max_{v \in \Gen} |L(v)| \leq t\}$,
we have
\begin{equation*}
Z_n-1 = Z_n^{(t)}-1=\sum_{|u| \leq n-1} L(u) \1_{\{L(u|_j) \leq t \text{ for } j<n\}} ([Z_1]_u-1).
\end{equation*}
Further, the martingale $Z_n^{(t)}-1$ converges a.\,s.\ and in $L^\gamma$ to some finite limit $Z^{(t)}-1$
which equals $Z-1$ on $\{\max_{v \in \Gen} |L(v)| \leq t\}$.
We prove that $Z^{(t)}-1$ is also the limit in $L^\gamma$ of the martingale $(M_n^{(t)})_{n \geq 0}$.
Indeed, we write
\begin{align*}
&	\E[|Z^{(t)}-1-M_n^{(t)}|^\gamma]	\\
&=	\E\bigg[\bigg|\sum_{k=0}^\infty \sum_{|u|=k} \!\! L(u) \1_{\{L(u|_j) \leq t \text{\,for\,} j \leq k\}}([Z_1]_u\!-\!1)	\\
&\hphantom{= \E\bigg[\bigg|}~-\sum_{u\in \I_n\cap \Gen} \!\!\! L(u)\1_{\{L(u|_j) \leq t \text{ for } j \leq |u|\}}([Z_1]_u\!-\!1)\bigg|^\gamma\bigg]\\
&=	\E\bigg[\bigg|\sum_{k=0}^\infty \sum_{\substack{|u|=k, \\ u\not \in\I_n}} L(u) \1_{\{L(u|_j) \leq t \text{\,for\,} j \leq k\}} ([Z_1]_u\!-\!1)\bigg|^\gamma\bigg].
\end{align*}
Notice that given $\F_{k}$ the sum $\sum_{|u|=k, u\not \in\I_n} L(u) \1_{\{L(u|_j) \leq t \text{\,for\,} j \leq k\}} ([Z_1]_u\!-\!1)$ is a weighted sum of centered random variables
and can be considered a martingale increment.
Two applications of Lemma \ref{Lem:TV-ineq complex} yield
\begin{align*}
&\E\bigg[\bigg|\sum_{k=0}^\infty \sum_{\substack{|u|=k, \\ u\not \in\I_n}} L(u) \1_{\{L(u|_j) \leq t \text{\,for\,} j \leq k\}} ([Z_1]_u\!-\!1)\bigg|^\gamma\bigg]	\\
&\leq 4 \E\bigg[\sum_{k=0}^\infty \bigg| \sum_{\substack{|u|=k, \\ u\not \in\I_n}} L(u) \1_{\{L(u|_j) \leq t \text{\,for\,} j \leq k\}} ([Z_1]_u\!-\!1)\bigg|^\gamma\bigg]	\\
&\leq 16 \cdot \E[|Z_1-1|^\gamma] \cdot \E\bigg[\sum_{k=0}^\infty \sum_{\substack{|u|=k, \\ u\not \in\I_n}} |L(u)|^\gamma \1_{\{L(u|_j) \leq t \text{\,for\,} j \leq k\}} \bigg].
\end{align*}
The above expectations are finite by \eqref{ass:Lgamma} and the arguments from the proof of Theorem 2.1 in \cite{Kolesko+Meiners:2017}.
By the dominated convergence theorem, we infer that $\lim_{n \to \infty} \E[|Z^{(t)}-1-M_n^{(t)}|^\gamma] = 0$.
Since $\Prob(\max_{v \in \Gen} |L(v)| > t) \leq t^{-\alpha}$, we conclude that $M_n \to Z-1$ a.\,s.
In view of this and \eqref{eq:higher moment},
\begin{align}	\label{eq:8}
\E \bigg[\bigg(\sum_{v\in \Gen} |L(v)|^2|[Z_1]_v-1|^2\bigg)^{\!\!(\alpha+\delta)/2}\bigg]<\infty
\end{align}
by the complex version of Burkholder's inequality.	
On the other hand, from \cite[Theorem 1.5]{Madaule:2016} we have
\begin{equation}	\label{eq:tail of all-time max weight}	\textstyle
\Prob(\max_{u \in \Gen} |L(u)|>t)>ct^{-\alpha}
\end{equation}
for some $c>0$ and all sufficiently large $t$.
Pick $t_0>0$ such that $\Prob(|Z_1-1|>t_0) \geq \frac12$.
Denote by $\mathcal{N}_t$ the set of individuals $u$ that are the first in their ancestral line with the property that $|L(u)| > t$, i.e.,
\begin{equation}
\mathcal{N}_t=\{u\in\I:|L(u)|>t,\text{ and }L(u|_k) \leq t \text{ for all } k<|u|\}.
\end{equation}
Then $\mathcal{N}_t$ is an optional line in the sense of Jagers \cite[Section 4]{Jagers:1989}.
Denote by $\F_{\mathcal{N}_t}$ the $\sigma$-field that contains the information of all reproduction point processes of all individuals
that are neither in $\mathcal{N}_t$ nor a descendent of a member of $\mathcal{N}_t$, see again Jagers \cite[Section 4]{Jagers:1989}
for a precise definition.
Then, by the strong Markov branching property \cite[Theorem 4.14]{Jagers:1989}
(the $\sigma$-field $\F_{\mathcal{N}_t}$ was introduced for a proper application of this result)
and \eqref{eq:tail of all-time max weight}, we infer
\begin{align*}
\Prob\bigg(\sum_{u\in\Gen} |L(u)|^2|[Z_1]_u-1|^2>t_0^2t^2\bigg)
&\geq	\Prob\Big(\sum_{u\in \mathcal{N}_t} |[Z_1]_u-1|^2>t_0^2\Big)\\
&\geq	\Prob(\mathcal{N}_t \not= \emptyset) \cdot \Prob(|Z_1-1|>t_0)
\geq \frac c2t^{-\alpha}
\end{align*}
for all sufficiently large $t$. This contradicts to \eqref{eq:8}, thereby proving that $\nu$ is non-zero.
\end{proof}

\begin{appendix}

\section{Auxiliary results}

In this section, we gather auxiliary facts needed in the proofs of our main results.

\subsection{Inequalities for complex random variables}

Throughout the paper, we need the complex analogues of known inequalities for real-valued random variables.

\subsubsection*{The Topchi\u\i-Vatutin inequality for complex martingales.}
We begin with an extension of the Topchi\u\i-Vatutin inequality
\cite[Theorem 2]{Topchii+Vatutin:1997} to complex-valued martingales.

\begin{lemma}	\label{Lem:TV-ineq complex}
Let $f: [0,\infty)\to [0,\infty)$ be a nondecreasing convex function with $f(0)=0$ such that
$g(x) \defeq f(\sqrt{x})$ is concave on $(0,\infty)$.
Let $(M_n)_{n \in \N_0}$ be a complex-valued martingale with $M_0=0$ a.\,s.\ and
set $D_n \defeq M_n - M_{n-1}$ for $n \in \N$.
If $\E[f(|D_k|)]<\infty$ for $k=1,\ldots,n$, then
\begin{equation}	\label{eq:TV-ineq complex}
\E[f(|M_n|)] \leq 4 \sum_{k=1}^n \E[f(|D_k|)].
\end{equation}
Further, if $f(x)>0$ for some $x>0$ and $\sum_{k=1}^\infty \E[f(|D_k|)] < \infty$, then $M_n \to M_\infty$ a.\,s.\ for some random variable $M_\infty$
and \eqref{eq:TV-ineq complex} holds for $n=\infty$.
\end{lemma}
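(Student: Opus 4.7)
The plan is to reduce to the real-valued Topchi\u\i--Vatutin inequality \cite[Theorem 2]{Topchii+Vatutin:1997} by decomposing the martingale into its real and imaginary parts. Writing $M_n = X_n + \imag Y_n$ and $D_k = D_k^{(1)} + \imag D_k^{(2)}$, the sequences $(X_n)_{n \in \N_0}$ and $(Y_n)_{n \in \N_0}$ are real-valued martingales with respect to the same filtration and with increments satisfying $|D_k^{(j)}| \leq |D_k|$ for $j=1,2$.

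The key step is the pointwise inequality
\begin{equation*}
f(|M_n|) \leq f(|X_n|) + f(|Y_n|),
\end{equation*}
which I would derive from the subadditivity of $g(x) = f(\sqrt{x})$. Since $g$ is concave on $[0,\infty)$ with $g(0) = 0$, for $a, b \geq 0$ concavity gives $g(a) \geq \frac{a}{a+b} g(a+b)$ and $g(b) \geq \frac{b}{a+b} g(a+b)$, which add to $g(a+b) \leq g(a) + g(b)$. Applying this with $a = X_n^2$, $b = Y_n^2$ and using $|M_n|^2 = X_n^2 + Y_n^2$ yields the display above. The real Topchi\u\i--Vatutin inequality then gives $\E[f(|X_n|)] \leq 2 \sum_{k=1}^n \E[f(|D_k^{(1)}|)]$ and the analogous bound for $Y_n$. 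Since $f$ is nondecreasing and $|D_k^{(j)}| \leq |D_k|$, summing produces the constant $4$ claimed in \eqref{eq:TV-ineq complex}.

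For the second assertion, the just-proved inequality gives $\sup_n \E[f(|M_n|)] < \infty$ under the hypothesis $\sum_{k=1}^\infty \E[f(|D_k|)] < \infty$. Since $f$ is convex with $f(0)=0$ and $f(x_0)>0$ for some $x_0>0$, the ratio $f(x)/x$ is nondecreasing on $(0,\infty)$ and bounded below by $f(x_0)/x_0 > 0$ for $x \geq x_0$; in particular $f$ is unbounded and $|x| \leq x_0 + (x_0/f(x_0)) f(|x|)$ holds for all $x \in \C$. Taking expectations shows that $(M_n)_{n \in \N_0}$ is $L^1$-bounded, so Doob's martingale convergence theorem applied to $(X_n)$ and $(Y_n)$ separately yields an a.\,s.\ limit $M_\infty$; Fatou's lemma then propagates \eqref{eq:TV-ineq complex} to $n = \infty$. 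The argument is essentially bookkeeping; the only step requiring care is the subadditivity observation for $g$, which is standard but is responsible for the loss of a factor of $2$ compared to the real case.
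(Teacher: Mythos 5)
Your proof is correct, but it takes a genuinely different route from the paper's. The paper works directly with the complex martingale: it first establishes the parallelogram-type inequality $f(|z+w|)+f(|z-w|)\leq 2(f(|z|)+f(|w|))$ for $z,w\in\C$ (using the same subadditivity of $g$ that you use) and then runs the Topchi\u\i--Vatutin conditional symmetrization argument verbatim, introducing a conditionally independent copy $D_k^*$ of $D_k$ given $M_{k-1}$ to obtain $\E[f(|M_k|)]\leq \E[f(|M_{k-1}|)]+4\E[f(|D_k|)]$ and iterating. You instead split $M_n=X_n+\imag Y_n$, bound $f(|M_n|)\leq f(|X_n|)+f(|Y_n|)$ via subadditivity of $g$, and invoke the real-valued inequality twice; both routes land on the same constant $4$. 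Your argument is more modular and shorter, since it black-boxes the one-dimensional case; the paper's is self-contained and would also be the natural template if one wanted to treat martingales in higher-dimensional or more general normed settings where a coordinate splitting is less clean. The one point you should make explicit is that your reduction needs the real Topchi\u\i--Vatutin inequality in its \emph{martingale} form (with constant $2$), not merely for sums of independent centered variables; this version is available in the literature (and is exactly what the paper's symmetrization argument reproduces), but it is worth flagging since $(X_n)$ and $(Y_n)$ are martingales with dependent, non-identically distributed increments. The treatment of the second assertion (linear lower growth of $f$, $L^1$-boundedness, Doob, Fatou) coincides with the paper's.
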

\begin{proof}
We first observe that
\begin{equation}	\label{eq:bound on f}
f(|z+w|)+f(|z-w|) \leq 2(f(|z|)+f(|w|))	\quad	\text{for all }	z,w \in \C.
\end{equation}
To see this, note that $g$ is subadditive as a concave function with $g(0)=0$, whence
\begin{equation*}
g(x^2+y^2)\leq g(x^2)+g(y^2)=f(x)+f(y)	\quad	\text{for all } x,y \geq 0.
\end{equation*}
Put $x=|u+v|/2$ and $s=|u-v|/2$ for $u,v \in \C$
and observe that
\begin{equation*}
\Big|\frac{u+v}{2}\Big|^2+\Big|\frac{u-v}{2}\Big|^2=\frac{|u|^2+|v|^2}{2}.
\end{equation*}
This together with the concavity of $g$ gives
\begin{equation*}
\frac12 \big(f(|u|)+f(|v|)\big)
= \frac12 \big(g(|u|^2)+g(|v|^2)\big)
\leq g\Big(\frac{|u|^2+|v|^2}{2}\Big)
\leq f\Big(\frac{|u+v|}{2}\Big)+f\Big(\frac{|u-v|}{2}\Big).
\end{equation*}
Multiply this inequality by $2$ and set $u=z+w$ and $v=z-w$ for $z,w \in \C$ to infer \eqref{eq:bound on f}.

The remainder of the proof closely follows the proof of Theorem 2 in \cite{Topchii+Vatutin:1997}.
For $k=1,\ldots,n$ assume that $\E[f(|D_k|)]<\infty$ and denote by $D_k^*$ a random variable such that $D_k$ and $D_k^*$ are i.i.d.\ conditionally given $M_{k-1}$.
Then
\begin{align*}
\E[f(|M_{k-1}+D_{k}-D_{k}^*|)]
&=\E[\E[f(|M_{k-1}+D_{k}-D_{k}^*|) \mid M_{k-1}]]	\\
& =\E[\E[f(|M_{k-1}-(D_{k}-D_{k}^*)|) \mid M_{k-1}]]	\\
&=\E[f[|M_{k-1}-(D_{k}-D_{k}^*)|].
\end{align*}
An appeal to \eqref{eq:bound on f} thus yields
\begin{equation}	\label{eq:separate increment}
\E[f(|M_{k-1}+D_{k}-D_{k}^*|)]
\leq \E[f(|M_{k-1}|)] + \E[f(|D_{k}-D_{k}^*|)].
\end{equation}
Another application of \eqref{eq:bound on f} yields
\begin{equation}\label{inter2}
\E[f(|D_{k}-D_{k}^*|)] \leq 4 \E[f(|D_{k}|)].
\end{equation}
Further,
\begin{align*}
|M_{k-1}+D_{k}|
&= |M_{k-1}+D_{k}-\E[D_{k}^* | M_{k-1}, D_{k}]|	\\
&=|\E[M_{k-1}\!+\!D_{k}\!-\!D_{k}^* \mid M_{k-1}, D_{k}]|	\\
&\leq \E[|M_{k-1}\!+\!D_{k}\!-\!D_{k}^*| \mid M_{k-1}, D_{k}]
\end{align*}
by Jensen's inequality for conditional expectation in $\R^2$,
which is applicable because the function $x\mapsto |x|$ is convex on $\R^2$.
Hence,
\begin{equation}	\label{eq:add 0}
\E[f(|M_{k-1}\!+\!D_{k}|)]
\leq \E[f(\E[|M_{k-1}\!+\!D_{k}\!-\!D_{k}^*||M_{k-1}, D_{k}])]
\leq \E[f(|M_{k-1}\!+\!D_{k}\!-\!D_{k}^*|)]
\end{equation}
by the monotonicity of $f$ and Jensen's inequality.
Combining \eqref{eq:separate increment}, \eqref{inter2} and \eqref{eq:add 0}, we arrive at
\begin{align*}
\E[f(|M_{k}|)]
\leq \E f(|M_{k-1}|)+4\E f(|D_{k}|).
\end{align*}
The claimed inequality follows recursively.

Finally, suppose that $f(x)>0$ for some $x>0$ and that $\sum_{k=1} \E[f(|D_k|)] < \infty$.
Then $\sup_{n \in \N} \E[f(|M_n|)] \leq \sum_{k=1} \E[f(|D_k|)] < \infty$.
Since $f(0)=0$ and $f$ is convex, we conclude that $f$ grows at least linearly fast and, thus,
$\sup_{n \in \N} \E[|M_n|] < \infty$.
By the martingale convergence theorem, $M_n \to M_\infty$ a.\,s.\ for some random variable $M_\infty$.
Further, $f$ is continuous as it is convex and non-decreasing. Therefore, Fatou's lemma implies
\begin{equation*}
\E[f(|M_\infty|)] = \E[\liminf_{n \to \infty} f(|M_\infty|)] \leq \liminf_{n \to \infty} \E[f(|M_n|)] \leq \sum_{k=1}^\infty \E[f(|D_k|)] < \infty.
\end{equation*}
\end{proof}

\subsubsection*{Tail bounds for sums of weighted i.i.d.\ complex random variables.}
Actually, we shall need the following corollary of Lemma \ref{Lem:TV-ineq complex},
which is closely related to Lemma 2.1 in \cite{Biggins+Kyprianou:1997} (see also formulae (2.3) and (2.10) in \cite{Kurtz:1972})
but deals with complex-valued rather than real-valued random variables.

\begin{corollary}	\label{Cor:tail bound sum of weighted iids}
Let $c_1,\ldots, c_n$ be complex numbers satisfying $\sum_{k=1}^n |c_k|=1$.
Further, let $Y_1,\ldots, Y_n$ be independent copies of a complex-valued random variable $Y$
with $\E[Y]=0$ and $\E[|Y|]<\infty$.
Then, for $\varepsilon \in (0,1)$ and with $c\defeq \max_{k=1,\ldots,n} |c_k|$,
\begin{equation*}
\Prob\bigg(\bigg|\sum_{k=1}^n c_kY_k\bigg|>\varepsilon\bigg)
\leq \frac{8}{\varepsilon^2}\bigg(\int_0^{1/c}cx\Prob(|Y|>x) \, \dx + \int_{1/c}^\infty \Prob(|Y|>x) \, \dx \bigg).
\end{equation*}
\end{corollary}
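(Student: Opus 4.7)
The plan is a truncation argument at level $T \defeq 1/c$ using a \emph{smooth} radial cutoff. Define $\psi:\C \to \C$ by $\psi(y) \defeq y \cdot \min(1, T/|y|)$ for $y \neq 0$ and $\psi(0)\defeq 0$, so that $|\psi(y)| = |y| \wedge T$ and, crucially, $|y - \psi(y)| = (|y| - T)_+$ for all $y \in \C$. Set $\tilde Y_k \defeq \psi(Y_k)$ and $R_k \defeq Y_k - \tilde Y_k$. Since $\E Y_k = 0$, we have $\E\tilde Y_k = -\E R_k$, so writing $S \defeq \sum_{k=1}^n c_k Y_k$ I would decompose
\[
S = \sum_{k=1}^n c_k(\tilde Y_k - \E\tilde Y_k) + \sum_{k=1}^n c_k(R_k - \E R_k) \eqdef S_1 + S_2,
\]
where both $S_1$ and $S_2$ are sums of independent, centered complex random variables, and then bound $\Prob(|S|>\varepsilon)$ via the union $\{|S|>\varepsilon\} \subseteq \{|S_1|>\varepsilon/2\} \cup \{|S_2|>\varepsilon/2\}$.

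For $S_1$, I would combine independence with Chebyshev. By independence and $|\tilde Y_k| \leq |Y_k|\wedge T$,
\[
\E|S_1|^2 = \sum_{k=1}^n |c_k|^2 \Var(\tilde Y_k) \leq \sum_{k=1}^n |c_k|^2 \E|\tilde Y_k|^2 \leq \Big(\sum_{k=1}^n|c_k|^2\Big) \cdot 2\int_0^{T} x \Prob(|Y|>x)\,\dx,
\]
where the last step uses the layer-cake identity. Since $\sum|c_k|^2 \leq c \sum|c_k| = c$, Chebyshev's inequality gives $\Prob(|S_1|>\varepsilon/2) \leq (8/\varepsilon^2)\int_0^{1/c} cx \Prob(|Y|>x)\,\dx$, matching the first half of the claimed bound.

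For $S_2$, I would use Markov's inequality at the first moment. The identity $|R_k| = (|Y_k|-T)_+$ combined with the layer-cake formula produces the clean expression
\[
\E|R_k| = \int_{T}^\infty \Prob(|Y|>x)\,\dx,
\]
with \emph{no} boundary term at $T$. The triangle inequality then yields $\E|S_2| \leq 2\sum|c_k|\E|R_k| = 2\int_{1/c}^\infty \Prob(|Y|>x)\,\dx$, so that $\Prob(|S_2|>\varepsilon/2) \leq (4/\varepsilon)\int_{1/c}^\infty \Prob(|Y|>x)\,\dx$. Because $\varepsilon \in (0,1)$ gives $4/\varepsilon \leq 8/\varepsilon^2$, this matches the second half of the bound, and adding the two pieces completes the proof.

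The argument is otherwise routine; the only subtle point, and what I expect to be the main obstacle if one attempts a naive approach, is the choice of the cutoff. A zero-truncation $\tilde Y_k = Y_k\1_{\{|Y_k|\leq T\}}$ leaves a residual boundary term $T\Prob(|Y|>T)$ in $\E|R_k|$ that does not fit cleanly into the target integral $\int_{1/c}^\infty \Prob(|Y|>x)\,\dx$. The smooth radial truncation $\psi$ avoids this entirely, which is precisely what makes the constants line up to produce the prefactor $8/\varepsilon^2$ on both halves.
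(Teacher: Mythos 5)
Your proof is correct, and it takes a genuinely different route from the paper's. The paper proves this corollary as a quick consequence of its Lemma \ref{Lem:TV-ineq complex} (the complex Topchi\u\i--Vatutin inequality): it applies Markov's inequality with the convex function $f(x)=x^2\1_{[0,1]}(x)+(2x-1)\1_{(1,\infty)}(x)$, uses the lemma to pass from $\E[f(|\sum_k c_kY_k|)]$ to $4\sum_k\E[f(|c_k||Y_k|)]$, and then recovers the two integrals from the identity $\E[f(|c_k||Y_k|)]=|c_k|\int_0^\infty f'(|c_k|x)\Prob(|Y|>x)\,\dx$ together with the monotonicity of $f'$ and $\sum_k|c_k|=1$; the quadratic-then-linear shape of $f$ is exactly what produces the split at $x=1/c$. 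You instead perform an explicit truncation at $1/c$ via the radial cutoff $\psi$, re-center both pieces, and handle the bounded part by Chebyshev (using $\sum_k|c_k|^2\le c$) and the tail part by first-moment Markov; your choice of a radial rather than indicator truncation is indeed what makes $\E|R_k|=\int_{1/c}^\infty\Prob(|Y|>x)\,\dx$ come out without a boundary term, and the final constants match the paper's exactly. I checked the details — the cross terms in $\E|S_1|^2$ vanish by independence and centering even for complex $c_k$, and $4/\varepsilon\le 8/\varepsilon^2$ for $\varepsilon\in(0,1)$ — so everything goes through. What each approach buys: the paper's argument is essentially one line given that Lemma \ref{Lem:TV-ineq complex} is already needed elsewhere in the paper, and it packages both regimes into a single moment inequality; yours is self-contained and elementary, requiring nothing beyond Chebyshev and Markov, and is in the spirit of the classical truncation arguments of Kurtz and Biggins--Kyprianou that the paper cites as precursors.
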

\begin{proof}
We use Lemma \ref{Lem:TV-ineq complex} with $f(x)=x^2\1_{[0,1]}(x)+(2x-1)\1_{(1,\infty)}(x)$ for $x \geq 0$.
Clearly, $f$ is convex such that $g$ defined by
$g(x)=f(\sqrt{x})=x\1_{[0,1]}(x)+(2x^{1/2}-1)\1_{(1,\infty)}(x)$ for $x \geq 0$ is concave.
Furthermore, $f$ is differentiable on $[0,\infty)$ with nondecreasing and continuous derivative
$f'(x)=2x\1_{[0,1]}(x)+2\1_{(1,\infty)}(x)$ for $x \geq 0$.
For $\varepsilon\in (0,1)$, by Markov's inequality,
\begin{align*}
\Prob\bigg(\bigg|\sum_{k=1}^n c_kY_k\bigg| > \varepsilon\bigg)
&\leq \frac{1}{f(\varepsilon)} \E\bigg[f\bigg(\bigg|\sum_{k=1}^n c_kY_k\bigg|\bigg)\bigg]
\leq \frac{4}{\varepsilon^2} \sum_{k=1}^n \E[f(|c_k||Y_k|)]	\\
&=\frac{4}{\varepsilon^2} \sum_{k=1}^n |c_k| \int_0^\infty f^\prime(|c_k|x) \, \Prob(|Y|>x) \, \dx \\
&\leq \frac{4}{\varepsilon^2} \sum_{k=1}^n |c_k| \int_0^\infty f^\prime(c x) \Prob(|Y|>x) \, \dx \\
&=\frac{8}{\varepsilon^2} \bigg(\int_0^{1/c} \!\! cx\Prob(|Y|>x)\, \dx+\int_{1/c}^\infty\Prob(|Y|>x) \, \dx \bigg),
\end{align*}
where we have used Lemma \ref{Lem:TV-ineq complex} and $f(\varepsilon)=\varepsilon^2$ for $\varepsilon\in (0,1)$
for the second inequality, and monotonicity of $f'$ for the third.
Integration by parts gives the first equality.
\end{proof}

\subsection{The minimal position}

In this section, we collect some known results concerning the minimal position in a branching random walk in
what is called the boundary case, see \cite{Biggins+Kyprianou:2005}.

\begin{proposition}	\label{Prop:minimal position}
Let $((V(u))_{|u|=n})_{n \in \N_0}$ be a branching random walk such that the positions in the first generation $V(u)$, $|u|=1$
satisfy the assumptions \eqref{eq:normalized_V}, \eqref{eq:sigma^2<infty Aidekon+Shi} and \eqref{eq:Aidekon+Shi condition}.
Then the sequence of distributions
of $n^{3/2} \sup_{|u|=n} e^{-V(u)}$, $n \in \N$ is tight. In particular,
\begin{equation}	\label{eq:minimal position exponentiated}
n^{1/2} \sup_{|u|=n} e^{-V(u)} \Probto 0	\quad	\text{as } n \to \infty.
\end{equation}
\end{proposition}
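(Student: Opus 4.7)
The plan is to derive both assertions directly from A\"{\i}d\'ekon's sharp asymptotics for the minimal position of a branching random walk in the boundary case. Under the assumptions \eqref{eq:normalized_V}, \eqref{eq:sigma^2<infty Aidekon+Shi} and \eqref{eq:Aidekon+Shi condition}, the walk $(V(u))_{u\in\Gen}$ is in the boundary case, and \cite[Theorem 1.1]{Aidekon:2013}, which is the very same ingredient already invoked in the proof of Lemma \ref{Lem:convergence of mu_n left-hand compactification}, yields the lower tail bound
\begin{equation*}
\lim_{K\to\infty}\limsup_{n\to\infty}\Prob\Big(\min_{|u|=n}V(u)-\tfrac32\log n\le -K\Big)=0.
\end{equation*}

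Next, I would exploit the elementary identity
\begin{equation*}
n^{3/2}\sup_{|u|=n}e^{-V(u)} \;=\; \exp\Big(-\Big(\min_{|u|=n}V(u)-\tfrac32\log n\Big)\Big).
\end{equation*}
Since for any $M>1$,
\begin{equation*}
\Prob\Big(n^{3/2}\sup_{|u|=n}e^{-V(u)}>M\Big) \;=\; \Prob\Big(\min_{|u|=n}V(u)-\tfrac32\log n<-\log M\Big),
\end{equation*}
choosing $M$ sufficiently large and applying A\"{\i}d\'ekon's bound renders this probability arbitrarily small uniformly in $n$, which proves tightness of the sequence $\bigl(n^{3/2}\sup_{|u|=n}e^{-V(u)}\bigr)_{n\in\N}$.

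The convergence \eqref{eq:minimal position exponentiated} then follows immediately by writing $n^{1/2}\sup_{|u|=n}e^{-V(u)} = n^{-1}\cdot n^{3/2}\sup_{|u|=n}e^{-V(u)}$: the product of a deterministic sequence tending to $0$ and a tight sequence converges to $0$ in probability. There is essentially no obstacle here, as the entire content of the proposition is contained in A\"{\i}d\'ekon's theorem, and the passage to $\sup_{|u|=n}e^{-V(u)}$ is a trivial rewriting. The only conceptual point worth noting is that we rely solely on the \emph{left} tail of $\min_{|u|=n}V(u)-\tfrac32\log n$ (because $e^{-V(u)}\ge 0$ forces tightness to be a one-sided requirement), which is precisely the portion of A\"{\i}d\'ekon's result already used elsewhere in the paper.
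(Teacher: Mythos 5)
Your overall strategy --- reduce everything to a left-tail bound for $\min_{|u|=n}V(u)-\tfrac32\log n$, note that $\sup_{|u|=n}e^{-V(u)}=e^{-\min_{|u|=n}V(u)}$, and deduce \eqref{eq:minimal position exponentiated} by multiplying a tight sequence by $n^{-1}$ --- is exactly the paper's, and those reductions are fine. The genuine gap is in the citation you hang the whole argument on: \cite[Theorem 1.1]{Aidekon:2013} requires a non-lattice assumption on the branching random walk, and that assumption is \emph{not} among the hypotheses of Proposition \ref{Prop:minimal position} (only \eqref{eq:normalized_V}, \eqref{eq:sigma^2<infty Aidekon+Shi} and \eqref{eq:Aidekon+Shi condition} are assumed). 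The paper flags this explicitly in the sentence preceding the proposition: the convergence in law \eqref{eq:Aidekon's convergence of the minimum in BRW} holds only ``under some extra non-lattice assumption,'' and the proposition is deliberately formulated so as to avoid it, because it is later applied in settings (the proof of Theorem \ref{Thm:Gaussian boundary case} and Lemma \ref{Lem:cancellation}(b)) where no non-latticeness is imposed. So as written, your proof establishes the proposition only under an additional hypothesis that would then propagate into Theorem \ref{Thm:Gaussian boundary case}.

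The conclusion you want --- $\lim_{x\to\infty}\sup_n\Prob(\min_{|u|=n}V(u)-\tfrac32\log n<-x)=0$ --- is indeed true without any lattice-type condition, but it needs a different source. The paper obtains it by splitting according to whether the all-time minimum $\min_{u\in\Gen}V(u)$ drops below $-x$ (controlled by a classical bound of the form $C(1+x)e^{-x}$) or the minimizing particle at time $n$ has stayed above $-x$ along its whole ancestral line (controlled by \cite[Corollary 3.4]{Aidekon:2013} together with inequality (4.12) of \cite{Madaule:2017}, neither of which needs non-latticeness), arriving at the quantitative bound $\Prob(\min_{|u|=n}V(u)-\tfrac32\log n<-x)\leq C(1+x)e^{-x}$ uniformly in $n$. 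Replacing your appeal to Theorem 1.1 by this argument (or any other lattice-free left-tail estimate) closes the gap; the rest of your write-up then goes through verbatim.
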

Notice that, under some extra non-lattice assumption, \cite[Theorem 1.1]{Aidekon:2013} gives the stronger statement
\begin{equation}	\label{eq:Aidekon's convergence of the minimum in BRW}
\lim_{n \to \infty} \Prob\Big(\min_{|u|=n} V(u) - \frac32 \log n \geq x\Big) = \E[e^{- C^* e^x D_\infty}]
\end{equation}
for all $x \in \R$ where $C^*$ is a positive constant and, as before, $D_\infty$ is the limit of the derivative martingale defined in \eqref{eq:derivative martingale}.
For our proposition, we do not need the full strength of \eqref{eq:Aidekon's convergence of the minimum in BRW},
which allows us to work without the lattice assumption.
\begin{proof}[Proof of Proposition \ref{Prop:minimal position}]
We need to estimate $\min_{|v|=n} V_n(u)$ from below.
Recall that, for $u \in \Gen$, we write $\underline V(u) \defeq \min_{k=0,\ldots,|u|} V(u|_k)$.
With this notation,
we have
\begin{align*}
\Prob\Big[\min_{|v|=n}V_n(u) <-x\Big]
&\leq \Prob\Big(\min_{u\in\Gen}V(u) \leq -x\Big) + \Prob\bigg(\min_{\substack{|u|=n,\\	\underline V(u) \geq - x}}V_n(u)<-x\bigg)    \\
&\leq C(1+x)e^{-x}
\end{align*}
by inequality (4.12) from \cite{Madaule:2017} and \cite[Corollary 3.4]{Aidekon:2013}.
The latter does not require a non-lattice assumption.
\end{proof}

\subsection{Asymptotic cancellation}

Let $L_u = (L_u(v))_{v \in \N}$, $u \in \V$ denote a family of i.i.d.\ copies
of a sequence $(L(v))_{v \in \N} = (L_\varnothing(v))_{v \in \N}$ of complex-valued random variables
satisfying
\begin{equation}\label{eq:finiteness}
\#\{v \in \N: L(v) \not = 0\} < \infty\quad\text{ a.\,s.}
\end{equation}
Define $L(\varnothing) \defeq 1$ and, recursively,
\begin{equation*}
L(uv)	\defeq	L(u) \cdot L_u(v)
\end{equation*}
for $u \in \V$ and $v \in \N$.
Further, we let
\begin{equation*}
Z_n	\defeq	\sum_{|u|=n} L(u)	\quad	\text{and}	\quad	W_n	\defeq	\sum_{|u|=n} |L(u)|
\end{equation*}
where summation over $|u|=n$ here means summation over all $u \in \N^n$ with $L(u) \not = 0$.
Finally, we set $\tilde{W}_1 \defeq \sum_{|v|=1} |L(v)| \log_-(|L(v)|)$.
We extend the shift-operator notation introduced in Section \ref{subsec:background} to the present context,
so if $X = \Psi((L_v)_{v \in \V})$ is a function of the whole family $(L_v)_{v \in \V}$ and $u \in \V$,
then $[X]_u \defeq \Psi((L_{uv})_{v \in \V})$.

\begin{lemma}	\label{Lem:cancellation}
Assume that $\E[W_1] = 1$ and that $a \defeq \E[Z_1] \in \C$ satisfies $|a|<1$.

\noindent
(a) If $\E[\sum_{|v|=1} |L(v)|^p] < 1$ and $\E[W_1^p] < \infty$ for some $p > 1$,
then $Z_n \to 0$ a.\,s. and in $L^{p\wedge 2}$.

\noindent
(b) Suppose that one of the following two conditions holds.
\begin{enumerate}[(i)]
	\item $W_n \to W$ in $L^1$;
	\item $\E[\sum_{|v|=1}|L(v)|\log(|L(v)|)]=0$, $\E [W_1\log_+^2(W_1)]<\infty$, $\E [\tilde{W}_1\log_+(\tilde{W}_1)]<\infty$,
		\begin{align*}
		\E\bigg[\sum_{|v|=1}|L(v)|\log^2(|L(v)|) \bigg]<\infty.
		\end{align*}
\end{enumerate}
Then $Z_n \to 0$ in probability if (i) holds and $n^{1/2} Z_n \to 0$ in probability if (ii) holds.
\end{lemma}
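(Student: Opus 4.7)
The proof rests on the decomposition
\[
Z_n = a Z_{n-1} + V_n,\quad V_n \defeq \sum_{|u|=n-1} L(u)\bigl([Z_1]_u - a\bigr),
\]
in which $V_n$ is, conditionally on $\F_{n-1}$, a sum of i.i.d.\ centered complex random variables; equivalently, $(a^{-n}Z_n)_{n\geq 0}$ is a complex martingale. Iterating gives $Z_n = a^n + \sum_{k=1}^n a^{n-k} V_k$, reducing matters to controlling the martingale differences $V_k$.

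\textbf{For part (a)}, H\"older's inequality makes $s\mapsto \log \E[\sum_{|v|=1}|L(v)|^s]$ convex; as it vanishes at $s=1$ and is strictly negative at $s=p$, it remains negative on $(1,p\wedge 2]$, so we may assume $p\in(1,2]$. Lemma \ref{Lem:TV-ineq complex} applied conditionally on $\F_{k-1}$ with $f(x)=x^p$ then yields $\E|V_k|^p\leq C\rho^{k-1}$ with $\rho \defeq \E[\sum_{|v|=1}|L(v)|^p]<1$. The $L^p$-triangle inequality in the representation $Z_n = a^n + \sum_k a^{n-k}V_k$ gives $\|Z_n\|_p = O(\max(|a|,\rho^{1/p})^n)$, with a polynomial correction only in the knife-edge case $|a|^p=\rho$. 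Almost sure convergence then follows from Markov's inequality and Borel--Cantelli.

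\textbf{For part (b)}, I truncate at level $K$: with $X_u^{(K)} \defeq [Z_1]_u \1_{|[Z_1]_u|\leq K}$ and $a_K\defeq \E X_u^{(K)}$, decompose
\[
V_n = V_n^{(K)} + \!\!\sum_{|u|=n-1}\!\! L(u)\,[Z_1]_u \1_{|[Z_1]_u|>K} + (a_K-a)Z_{n-1},
\]
where $V_n^{(K)}\defeq \sum_{|u|=n-1} L(u)(X_u^{(K)}-a_K)$. The tail and mean-shift pieces have $L^1$-norms at most $\E[|Z_1|\1_{|Z_1|>K}]$ and $|a_K-a|$ respectively, both of which vanish as $K\to\infty$ uniformly in $n$. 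For the bounded piece, the conditional second-moment estimate
\[
\E[|V_n^{(K)}|^2\mid\F_{n-1}] \leq CK^2\!\!\sum_{|u|=n-1}\!\!|L(u)|^2 \leq CK^2\Big(\max_{|u|=n-1}|L(u)|\Big)W_{n-1}
\]
combined with conditional Chebyshev yields $V_n^{(K)}\to 0$ in probability (resp.\ $\sqrt n V_n^{(K)}\to 0$) once one knows $\max_{|u|=n}|L(u)|\,W_n \to 0$ in probability (resp.\ $n\max_{|u|=n}|L(u)|\,W_n\to 0$). Under (ii) this follows from Proposition \ref{Prop:minimal position} applied to $V(u)=-\log|L(u)|$, together with the tightness of $\sqrt n W_n$ from \eqref{eq:Aidekon+Shi}; under (i) it follows from uniform integrability of $(W_n)$ via a size-biased spine decomposition of the cascade. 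The telescoping identity then converts the negligibility of $V_n$ into that of $Z_n$: for (i) one splits $Z_n = a^n + \sum_k a^{n-k}V_k$ at a cutoff $k=n-M$, uses $\E|V_k|\leq 2$ to make the geometric tail contribute $O(|a|^M)$, and sends first $n\to\infty$ then $M\to\infty$; for (ii) the corresponding split exploits the quantitative rate $\E|V_n^{(K)}|^2 = o(1/n)$ produced above.

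\textbf{The main obstacle} is the max-decay statement under (i): pure $L^1$-convergence of $W_n$ needs to be combined with the branching structure via the Lyons spine/size-biased measure to rule out a persistent atom of mass $\max|L(u)|$; once this is in hand, the truncation scheme handles cases (i) and (ii) in parallel.
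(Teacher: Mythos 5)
Part (a) of your proposal is correct and is essentially the paper's own argument: reduce to $p\in(1,2]$, apply the complex Topchi\u\i--Vatutin inequality (Lemma \ref{Lem:TV-ineq complex}) to the conditionally centered increments, and deduce geometric decay of $\E[|Z_n|^p]$; the paper splits $Z_n$ at generation $\lfloor n/2\rfloor$ instead of telescoping fully, but this is immaterial.

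Part (b) has a genuine gap in the $\sqrt{n}$-rescaled case (ii). You control the tail piece $\sum_{|u|=n-1}L(u)[Z_1]_u\1_{\{|[Z_1]_u|>K\}}$ and the mean-shift piece $(a_K-a)Z_{n-1}$ only through their \emph{unconditional} $L^1$-norms, $\E[|Z_1|\1_{\{|Z_1|>K\}}]$ and $|a_K-a|$. After multiplying by $\sqrt{n}$ these bounds become $\sqrt{n}\,\E[|Z_1|\1_{\{|Z_1|>K\}}]$ and $\sqrt{n}\,|a_K-a|$, which diverge for every fixed $K$; and since only $\E|Z_1|<\infty$ is assumed there is no rate available, so no choice $K=K_n\to\infty$ compatible with the second-moment constraint $K_n=o(\sqrt{n})$ rescues the estimate in general. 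Moreover the claim ``$\E|V_n^{(K)}|^2=o(1/n)$'' is false as an unconditional statement: $\E[\sum_{|u|=n-1}|L(u)|^2]=(\E[\sum_{|v|=1}|L(v)|^2])^{n-1}$, and in the boundary case (ii) the convex function $s\mapsto\E[\sum_{|v|=1}|L(v)|^s]$ has value $1$ and derivative $0$ at $s=1$, hence $\E[\sum_{|v|=1}|L(v)|^2]\geq 1$ (possibly $=\infty$); only the \emph{conditional} second moment is $o_P(1/n)$. The missing idea, and the one the paper uses, is to normalize by the partition function first: write $(Z_n-a^kZ_{n-k})/W_{n-k}$ as a weighted sum with weights $L(v)/W_{n-k}$ whose absolute values sum to one, so that the truncation bound (Corollary \ref{Cor:tail bound sum of weighted iids}) depends only on $\max_{|v|=n-k}|L(v)|/W_{n-k}\to 0$ and no rate whatsoever is needed; the factor $\sqrt{n}$ is then absorbed at the very end via $\sqrt{n}\,Z_n=(Z_n/W_n)\cdot\sqrt{n}\,W_n$ and the tightness of $\sqrt{n}\,W_n$ from \eqref{eq:Aidekon+Shi}. (Your own conditional computations contain the seed of this -- the tail piece's conditional $L^1$-norm is $W_{n-1}\E[|Z_1|\1_{\{|Z_1|>K\}}]$ and $\sqrt{n}\,W_{n-1}$ is tight -- but the proposal never takes that step.) Two further points remain open: the max-decay $\max_{|u|=n}|L(u)|\to 0$ under (i), which you flag as an obstacle and which the paper settles by citing Biggins's theorem \eqref{eq:Biggins:1998} rather than a spine construction; and the passage from $\sqrt{k}\,V_k\to 0$ to $\sqrt{n}\,Z_n\to 0$, where the geometric-tail estimate $\sqrt{n}\sum_{k\leq n-M}|a|^{n-k}\E|V_k|\leq 2\sqrt{n}\,|a|^{M}/(1-|a|)$ forces the cutoff $M$ to grow like $\log n$ and hence requires uniform control over of order $\log n$ head terms, which is not provided.
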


\begin{proof}
(a)
We can assume without loss of generality that $p \in (1,2]$. According to \cite[Corollary 5]{Iksanov:2004} or \cite[Theorem 2.1]{Liu:2000}
the martingale $W_n$ converges a.\,s.\ and in $L^p$ to some limit $W$.
Let
\begin{equation*}	\textstyle
q \defeq \max\{|a|^p, \E[\sum_{|v|=1} |L(v)|^p]\} < 1.
\end{equation*}
For $k=\lfloor n/2\rfloor$, we have
\begin{align*}
\E[|Z_n|^p]
&\leq 2^{p-1} \E[|Z_{n}-a^kZ_{n-k}|^p]+2^{p-1}\E[|a^kZ_{n-k}|^p]\\
&\leq 2^{p-1} \E\bigg[\E\bigg[\bigg|\sum_{|v|=n-k} L(v)([Z_k]_v-a^k)\bigg|^p \, \Big| \, \F_{n-k}\bigg]\bigg] + 2^{p-1} |a|^{kp}\E[W_{n-k}^p]	\\
&\leq 2^{p+1} \E\bigg[\sum_{|v|=n-k} |L(v)|^p \E[|Z_k-a^k|^p] \bigg]+ 2^{p-1} q^{k}\E[W^p]	\\
&\leq 2^{p+1} \E\bigg[\sum_{|v|=n-k} |L(v)|^p\bigg] 2^{p-1} (\E[|Z_k|^p] +|a|^{kp}) + 2^{p-1} q^{k}\E[W^p]	\\
&\leq \big(2^{p+1} (\E[W^p]+1)+ \E[W^p]\big) 2^{p-1} q^k
\end{align*}
where we have repeatedly used that $|z+w|^p \leq 2^{p-1}(|z|^p+|w|^p)$ and Lemma \ref{Lem:TV-ineq complex}
for the third inequality.
The bound decays exponentially as $n \to \infty$ giving
$Z_n \to 0$ in $L^p$ and also $Z_n \to 0$ a.\,s.\ by virtue of the Borel-Cantelli lemma and Markov's inequality.

\noindent
(b)
Let $\Surv \defeq \{W_n > 0 \text{ for all }n \geq 0\}$ denote the survival set of the system.
It is clear that the claimed convergence holds on the set of extinction $\Surv^c$.
Therefore, in what follows we work under $\Prob^*(\cdot) \defeq \Prob(\cdot | \Surv)$.

\noindent
We first assume that (i) holds. Then
\begin{equation}	\label{eq:Biggins martingale convergence W_n}
W_n \to W		\quad	\Prob^*\text{-a.\,s.}
\end{equation}
Eq.\ \eqref{eq:Biggins martingale convergence W_n} in combination with \eqref{eq:Biggins:1998} gives
\begin{equation}	\label{eq:sup/sum->0 a.s.}
\frac{\sup_{|v|=n}|L(v)|}{W_n} \to 0	\quad	\Prob^*\text{-a.\,s.}
\end{equation}
If, on the other hand, assumption (ii) is satisfied,
then \cite[Theorem 1.1]{Aidekon+Shi:2014} gives
\begin{equation}	\label{eq:SH-norming by Aidekon+Shi}
\sqrt n W_n \to  W^*	\quad \text{in $\Prob^*$-probability}
\end{equation}
for some random variable $W^*$ satisfying $\Prob^*(W^* > 0) = 1$.
As before, we need control over $\max_{|u|=n}|L(u)|$.
A combination of
\eqref{eq:SH-norming by Aidekon+Shi} and Proposition \ref{Prop:minimal position} gives
\begin{equation}	\label{eq:sup/sum->0 in prob}
\frac{\sup_{|v|=n}|L(v)|}{\sum_{|v|=n}|L(v)|}=\frac{n^{3/2}\sup_{|v|=n}|L(v)|}{n^{1/2} W_n} \frac1n	\stackrel{\Prob^*}{\to}	 0
\quad	\text{as } n \to \infty.
\end{equation}
From now on, we treat both cases, (i) and (ii), simultaneously.
In view of \eqref{eq:Biggins martingale convergence W_n} and \eqref{eq:SH-norming by Aidekon+Shi}, it remains to prove that
\begin{equation}	\label{eq:remains to show}
\lim_{n \to \infty} \frac{{Z}_n}{W_n} = 0	\quad	\text{in $\Prob^*$-probability.}
\end{equation}
The last relation follows if we can show that, for any fixed positive integer $k< n$,
\begin{equation}	\label{eq:Z_n-a^kZ_n-k/W_n}
\lim_{n\to\infty}\frac{{Z}_n-a^k {Z}_{n-k}}{W_n}=0	\quad	\text{in $\Prob^*$-probability}
\end{equation}
and that, for all $\varepsilon \in (0,1)$,
\begin{equation}	\label{eq:WLLN Z_n-a^kZ_n-k}
\lim_{k\to\infty} \limsup_{n\to\infty}\Prob^*\bigg(\bigg|\frac{a^k {Z}_{n-k}}{W_n}\bigg|>\varepsilon\bigg) = 0.
\end{equation}
Since, for any $k \in \N$, we have
\begin{equation}	\label{eq:W_n-k/W_n}
\lim_{n \to \infty} \frac{W_{n-k}}{W_n} = 1	\quad	\text{in $\Prob^*$-probability},
\end{equation}
for all $k$ such that $|a|^k < \varepsilon/2$, we have
\begin{equation*}
\Prob^*\bigg(\bigg|\frac{a^k {Z}_{n-k}}{W_n}\bigg|>\varepsilon\bigg)
\leq \Prob^*\bigg(\bigg|\frac{a^k {W}_{n-k}}{W_n}\bigg|>\varepsilon\bigg)	\to	0	\quad	\text{as }	n \to \infty.
\end{equation*}
Hence, \eqref{eq:WLLN Z_n-a^kZ_n-k} holds and it remains to check \eqref{eq:Z_n-a^kZ_n-k/W_n}.
In view of \eqref{eq:W_n-k/W_n}, relation \eqref{eq:Z_n-a^kZ_n-k/W_n} is equivalent to
\begin{equation}	\label{eq:Z_n-a^kZ_n-k/W_n-k}
\lim_{n\to\infty}\frac{{Z}_n-a^k {Z}_{n-k}}{W_{n-k}}=0	\quad \text{in $\Prob^*$-probability.}
\end{equation}
Setting $\Surv_j \defeq \{W_j > 0\}$ for $j \in \N_0$, we have $\Surv_j \downarrow \Surv$ as $j \to \infty$.
Thus, for $\varepsilon>0$, we infer
\begin{equation*}
\Prob\bigg(\bigg|\frac{{Z}_n-a^k{Z}_{n-k}}{W_{n-k}}\bigg|>\varepsilon, \Surv\bigg)
\leq \E \bigg[\1_{\Surv_{n-k}}\Prob\bigg(\bigg|\frac{{Z}_n-a^k{Z}_{n-k}}{W_{n-k}}\bigg|>\varepsilon\bigg|\mathcal{F}_{n-k}\bigg)\bigg],
\end{equation*}
so that it suffices to show that the right-hand side converges to zero.
To this end, we work on $\Surv_{n-k}$ without further notice.
We use the representation
\begin{equation*}
\frac{{Z}_n-a^k {Z}_{n-k}}{W_{n-k}} = \sum_{|v|=n-k}\frac{L(v)}{W_{n-k}}([{Z}_k]_v-a^k).
\end{equation*}
Given $\mathcal{F}_{n-k}$, the right-hand side is a weighted sum of
i.i.d.\ centered complex-valued random variables which satisfies
the assumptions of Corollary \ref{Cor:tail bound sum of weighted iids}
with $c_v=L(v)/W_{n-k}$, $|v|=n-k$ and $Y={Z}_k-a^k$.
Note that $\#\{c_v: c_v\neq 0, |v|=n-k\}<\infty$ a.\,s.\ in view of \eqref{eq:finiteness},
that $\sum_{|v|=n-k}|c_v|=1$ a.\,s.\ and that $\E[|{Z}_k-a^k|] \leq \E[|{Z}_k|]+|a|^k\leq \E[W_k]+|a|^k \leq 2$.
With
\begin{equation*}
c(n-k) \defeq \frac{\sup_{|v|=n-k}|L(v)|} {\sum_{|v|=n-k}|L(v)|}
\end{equation*}
an application of Corollary \ref{Cor:tail bound sum of weighted iids} yields
\begin{align*}
\Prob\bigg(\bigg|\frac{{Z}_n-a^k {Z}_{n-k}}{W_{n-k}}\bigg|>\varepsilon\big|\mathcal{F}_{n-k}\bigg)
&\leq
\frac{8}{\varepsilon^2}\bigg(c(n-k) \int_0^{1/c(n-k)} \!\!\!\!\!\! x \, \Prob(|{Z}_k-a^k|>x) \, \dx	\\
&\hphantom{\leq \frac{8}{\varepsilon^2}\bigg(}\,
+\int_{1/c(n-k)}^\infty \Prob(|{Z}_k-a^k|>x) \, \dx \bigg).
\end{align*}
We claim that the right-hand side converges to zero in probability
as $n\to\infty$. Indeed, according to \eqref{eq:sup/sum->0 a.s.} and \eqref{eq:sup/sum->0 in prob}, respectively,
$c(n-k) \to 0$ in $\Prob^*$-probability.
It remains to use the following simple fact.
If $h$ is a measurable function satisfying $\lim_{y \to 0} h(y)=0$ and if $\lim_{n\to\infty} \tau_n=0$
in probability, then $\lim_{n\to\infty} h(\tau_n) = 0$ in probability.
For instance, apply this to $h(y)=y \int_0^{1/y} x \Prob(|{Z}_k-a^k|>x) \, \dx$ and $\tau_n=c(n-k)$.
It follows from Markov's inequality and $\E[|{Z}_k-a^k|]<\infty$ that $\lim_{x \to \infty} x \Prob(|{Z}_k-a^k|>x)=0$,
which in turn implies $h(y) \to 0$ as $y \to 0$.
The proof of \eqref{eq:Z_n-a^kZ_n-k/W_n-k}, and hence of \eqref{eq:Z_n-a^kZ_n-k/W_n}, is complete.
\end{proof}
\end{appendix}

\subsubsection*{Acknowledgements.}
The research of K.\,K.~ and M.\,M.\ was supported by DFG Grant ME 3625/3-1.
K.\,K.~was further supported by the National Science Center, Poland (Sonata Bis, grant number DEC-2014/14/E/ST1/00588).
A part of this work was done while A.\,I.\ was visiting Innsbruck in August 2017.
He gratefully acknowledges hospitality and the financial support again by DFG Grant ME 3625/3-1.

\bibliographystyle{plain}
\bibliography{BRW}

\end{document}